\newcolumntype{x}[1]{>{\centering\arraybackslash}p{#1}}
\newcommand\diag[4]{%
  \multicolumn{1}{p{#2}|}{\hskip-\tabcolsep
  $\vcenter{\begin{tikzpicture}[baseline=0,anchor=south west,inner sep=#1]
  \path[use as bounding box] (0,0) rectangle (#2+2\tabcolsep,\baselineskip);
  \node[minimum width={#2+2\tabcolsep},minimum height=\baselineskip+\extrarowheight] (box) {};
  \draw (box.north west) -- (box.south east);
  \node[anchor=south west] at (box.south west) {#3};
  \node[anchor=north east] at (box.north east) {#4};
 \end{tikzpicture}}$\hskip-\tabcolsep}}
\theoremstyle{plain}
\newtheorem{lem}{Lemma}
\newtheorem{lemma}[lem]{Lemma}
\newtheorem{theorem}[lem]{Theorem}
\newtheorem{prop}[lem]{Proposition}
\newtheorem{cor}[lem]{Corollary}
\newtheorem{corollary}[lem]{Corollary}
\newtheorem{conjecture}[lem]{Conjecture}
\newtheorem{fact}[lem]{Fact}
\newtheorem{philosophy}[lem]{Philosophy}
\theoremstyle{definition}
\newtheorem{defn}[lem]{Definition}
\newtheorem{definition}[lem]{Definition}
\newtheorem{example}[lem]{Example}
\newtheorem{remark}[lem]{Remark}
\newtheorem{notation}[lem]{Notation}
\numberwithin{equation}{section}
\numberwithin{lem}{section}
\newcommand{\mathfont}{\mathbf}
\newcommand{\C}{\mathfont C}
\newcommand{\Z}{\mathfont Z}
\newcommand{\ZZ}{\mathfont Z}
\newcommand{\Q}{\mathfont Q}
\newcommand{\QQ}{\mathfont Q}
\newcommand{\F}{\mathfont F}
\newcommand{\FF}{\mathfont F}
\newcommand{\bQ}{\mathfont{Q}}
\newcommand{\cT}{\mathcal{T}}
\DeclareFontFamily{OT1}{rsfs}{}
\DeclareFontShape{OT1}{rsfs}{n}{it}{<-> rsfs10}{}
\DeclareMathAlphabet{\mathscr}{OT1}{rsfs}{n}{it}
\newcommand{\Gscr}{\mathscr{G}}
\newcommand{\sS}{\mathscr{S}}
\newcommand{\Pscr}{\mathscr{P}}
\newcommand{\Oscr}{\mathscr{O}}
\DeclareMathOperator{\Hom}{Hom}
\DeclareMathOperator{\coker} {coker}
\DeclareMathOperator{\Aut}{Aut}
\DeclareMathOperator{\Gal}{Gal}
\DeclareMathOperator{\tr}{tr}
\DeclareMathOperator{\ord}{ord}
\newcommand{\kbar}{\bar{k}}
\DeclareMathOperator{\Ht}{ht}
\renewcommand{\ll}{\mathrm{ll}}
\newcommand{\mult}{\mathrm{mult}}
\newcommand{\et}{\text{\'et}}
\newcommand{\PP}{\mathfont{P}}
\DeclareMathOperator{\Pic}{Pic}
\newcommand{\dk}[2]{a^{#1}(#2)}
\newcommand{\D}{\mathbf{D}}
\DeclareMathOperator{\Ig}{Ig}
\newcommand{\moduli}{\Pscr}
\newcommand{\timing}{\textrm{time}}
\newcommand{\basic}{\textrm{basic}}
\newcommand{\modded}{\textrm{mod}}
\newcommand{\nilp}{\text{-}\mathrm{nil}}
\newcommand{\bij}{\text{-}\mathrm{bij}}
\DeclareMathOperator{\Cl}{Cl}
\newcommand{\smallmod}[1]{(\operatorname{mod} #1)}
\title{Iwasawa Theory for $p$-torsion Class Group Schemes in Characteristic $p$}
\author{Jeremy Booher}
\address{School of Mathematics and Statistics, University of Canterbury, Private Bag 4800, Christchurch 8140, New Zealand}
\email{jeremy.booher@canterbury.ac.nz}
\author{Bryden Cais}
\address{Department of Mathematics,
The University of Arizona,
617 N. Santa Rita Ave.
Tucson, AZ. 85721 USA}
\email{cais@math.arizona.edu}
\date{October 1, 2022}
\begin{document} 

\begin{abstract}
  We investigate a novel geometric Iwasawa theory for $\Z_p$-extensions of function fields over a perfect field $k$
  of characteristic $p>0$ by replacing the usual study of $p$-torsion in class groups with the study of 
  $p$-torsion class group {\em schemes}.  
  That is, if $\cdots \to X_2 \to X_1 \to X_0$ is the tower of curves over $k$ associated to a $\Z_p$-extension of function fields totally ramified over a finite non-empty set of places,
  we investigate the growth of the $p$-torsion group scheme in the Jacobian of $X_n$ as $n\rightarrow \infty$.
  By Dieudonn\'e theory, this amounts to studying the first de Rham cohomology groups of $X_n$ equipped
  with natural actions of Frobenius and of the Cartier operator $V$.  We formulate and test a number of conjectures
  which predict striking regularity in the $k[V]$-module structure of the space $M_n:=H^0(X_n, \Omega^1_{X_n/k})$ of global regular differential forms as $n\rightarrow \infty.$
  For example,  for each tower in a basic class of $\Z_p$-towers we conjecture that the dimension of the kernel of $V^r$ on $M_n$ is 
  given by $a_r p^{2n} + \lambda_r n + c_r(n)$ for all $n$ sufficiently large, where $a_r, \lambda_r$ are rational constants and $c_r : \Z/m_r \Z \to \Q$ 
  is a periodic function, depending on $r$ and the tower.
   To provide evidence for these conjectures, we collect extensive experimental data based on new and more efficient algorithms for working with differentials on $\Z_p$-towers of curves, and we prove our conjectures in the case $p=2$
  and $r=1$.
\end{abstract}

\maketitle

\section{Introduction}

\subsection{Geometric Iwasawa Theory} 
Fix a perfect field $k$ of characteristic $p>0$, and an algebraic function field $K$ in one variable over $k$.
Let $L/K$ be a Galois extension with $\Gamma:=\Gal(L/K)\simeq \Z_p$, the group of $p$-adic integers. 
We suppose that $L/K$ is unramified outside a finite
set of places $S$ of $K$ (which are trivial on $k$) and totally ramified at every place in $S$.\footnote{This last hypothesis may be achieved
by passing to a finite extension of $K$.  If $k$ is finite, all places of $K$ are automatically trivial on $K$.}  
Let $\Gamma_n:=p^n\Z_p$, and write $K_n = L^{\Gamma_n}$ for the fixed field of $\Gamma_n$.

In the spirit of classical Iwasawa theory, we seek to understand the growth of the $p$-primary part
of the class group of $K_n$ as $n$ grows.  When $L$ is the {\em constant} $\Z_p$-extension of $K$,
the regular growth of the class groups of $K_n$ was indeed Iwasawa's primary motivation for the eponymous theory
he initiated for number fields \cite{Iwasawa}.  
When $k$ is algebraically closed in $L$---which we assume henceforth---the growth of the class groups of
$K_n$ with $k$ finite has been studied by Mazur--Wiles \cite{mw83} and Crew \cite[\S3]{CrewIwasawa2} (for $S$ nonempty)
and by Gold-Kisilevsky \cite{gk88}. 
These works analyze the {\em physical} class group $\Cl_{K_n}$ of degree zero divisor classes {\em defined over $k$}
modulo linear equivalence, and prove---in perfect analogy with the number field setting---that 
when $k$ is a finite field, 
the Iwasawa module $\varprojlim_n \Cl_{K_n}[p^{\infty}]$
is finitely generated and torsion over $\Lambda:=\Z_p[\![\Gamma]\!]$, with no finite submodules.
The celebrated growth formula $\# \Cl_{K_n}[p^{\infty}] = p^{n\lambda+p^n\mu+\nu}$ for $n\gg 0$ follows.

In this function field setting, however, there is another, far more interesting {\em motivic} interpretation of ``class group''
provided by the Jacobian of the associated algebraic curve.  
Writing $X_n$ for the 
unique smooth, projective and geometrically connected curve over $k$ with function field $K_n$ (with $K_0 = K$ corresponding to $X_0$),
we obtain a {\em $\Z_p$-tower of curves}
\[
\cT : \cdots \to X_n \to \cdots \to X_2 \to X_1 \to X_0
\]
with $X_n\rightarrow X_0$ a branched $\Z/p^n\Z$-cover, unramified outside a finite set of points $S$ of $X_0$
and totally ramified over every point of $S$.
For each $n$, 
the Jacobian $J_{X_n}:=\Pic^0_{X_n/k}$ represents the functor of equivalence classes of degree zero divisors
on $X_n$, and is a rich algebro-geometric object with no analogue in the number field setting.
From this point of view, the $p$-primary part of the motivic class group is the full $p$-divisible (Barsotti--Tate) group 
$J_{X_n}[p^{\infty}]$, which is an inductive system of $p$-power group {\em schemes}. 
The $p$-primary part $\Cl_{K_n}[p^{\infty}]$ of the ``physical'' class group is none other than the
group of $k$-rational points of $J_{X_n}[p^{\infty}]$, which is only a very small piece of $J_{X_n}[p^{\infty}]$;
for example, when $X_0=\PP^1_k$ and $S=\{\infty\}$ (which is a prototypical case), the abelian group
$\Cl_{K_n}[p^{\infty}]$ is trivial, while the $p$-divisible group $J_{X_n}[p^{\infty}]$ has height ${2g_n}$
with $g_n$ the genus of $X_n$.

Our aim is to understand the structure---broadly construed---of 
the full $p$-divisible group $J_{X_n}[p^{\infty}]$ as $n\rightarrow \infty$.
Recent work provides some evidence
that there should be an Iwasawa theory for these
objects.  
By analyzing $L$-functions,
Davis, Wan and Xiao \cite{DWX16} prove that, for a certain class
of $\Z_p$-towers $\{X_n\}_{n\ge 0}$ with $X_0=\PP^1$ and $S=\{\infty\}$ (a class which we call ``basic''
in what follows; see Section~\ref{defn:basictower}), the isogeny type of $J_{X_n}[p^{\infty}]$ over $\kbar$
behaves in a remarkably regular way as $n$ grows (c.f. \cite{KostersZhu,rwxy,Xiang,KMU}).  However,
{\em isogeny type} is a somewhat coarse invariant, as it loses all touch with {\em torsion} phenomena.
As a first and critical step towards understanding this more subtle torsion in the full $p$-divisible group, we will investigate the
$p$-torsion group schemes $J_{X_n}[p]$
which are polarized ``1-truncated Barsotti--Tate groups''. 
These objects have a rich and extensive history, yet despite being the focus
of much research (e.g.~\cite{Oort99,PriesUlmer}) remain rather mysterious.
The goal of this paper is to provide 
evidence---both theoretical and computational---for the following
Iwasawa-theoretic principle:

\begin{philosophy} \label{philosophy}
    For any $\Z_p$-tower of curves $\{X_n\}_{n\ge 0}$,
    the $p$-torsion group schemes $J_{X_n}[p]$ behave in a ``regular'' way as $n\rightarrow \infty$.
\end{philosophy}

As a first approximation to $J_{X_n}[p]$, we will study the 
kernel of Frobenius $J_{X_n}[F]$.  
Note the quotient of $J_{X_n}[p]$ by $J_{X_n}[F]$ is canonically isomorphic to the Cartier dual of $J_{X_n}[F]$.
In this way, knowledge of $J_{X_n}[F]$ determines $J_{X_n}[p]$ up to a single extension. 
The virtue of focusing attention on $J_{X_n}[F]$ is that it can be understood {\em explicitly} via differentials on the
curve $X_n$.  
Indeed, 
the group scheme $J_{X_n}[F]$ functorially determines and is determined by its contravariant Dieudonn\'e module,
which by a theorem of Oda \cite{Oda}, is naturally identified with
the $k[V]$-module $M_n:=H^0(X_n,\Omega^1_{X_n/k})$
of global regular 1-forms on $X_n$, with $V$ acting as the Cartier operator.
Thus, to analyze the growth of the group schemes $J_{X_n}[F]$,
we will study the $k[V]$-module structure of $M_n$ as $n$ grows.  
In this paper, we develop efficient algorithms to compute with differentials on $\Z_p$-towers in order to provide computational evidence for Philosophy~\ref{philosophy} and we prove instances of the philosophy when $p=2$.

Let us describe our contributions in more detail.
For each $n$, Fitting's Lemma gives a natural direct sum decomposition of $k[V]$-modules
\begin{equation}
    M_n=H^0(X_n,\Omega^1_{X_n/k}) = M_n^{V\nilp}\oplus M_n^{V\bij},\label{eq:fitting}
\end{equation}
where $M_n^{V\nilp}$ (respectively $M_n^{V\bij}$) is the maximal $k[V]$-submodule
on which $V$ is nilpotent (respectively bijective).  As the $\Z_p$-tower is totally ramified over the set $S$, the {\em Deuring--Shafarevich formula} \cite{Subrao}
provides a dimension formula for the $p$-rank
\begin{equation}
    d_n:=\dim_k M_n^{V\bij} = p^n (d_0 + |S| - 1) - (|S| -1)
    \label{eq:DSformula}
\end{equation}
which is an instance of Philosophy \ref{philosophy}.
Moreover, one has an isomorphism of $\kbar[V]$-modules 
\[
    M_n^{V\bij}\otimes_k \kbar \simeq (\kbar[V]/(V-1))^{d_n},
\]
which with \eqref{eq:DSformula} provides a nearly complete understanding
of the behavior of $M_n^{V\bij}$ as $n$ grows.

As for the $V$-nilpotent part, taken together the Riemann--Hurwitz and Deuring--Shafarevich
formulae yield the dimension formula
\begin{equation} \label{eq:nilformula}
    \dim_k M_n^{V\nilp} = (g_n - d_n)  = 
    p^n (g_0-d_0) + \frac{1}{2}(p-1) \sum_{Q \in S} \sum_{i=1}^n p^{i-1} (s_Q(i) - 1) 
\end{equation}
where $s_Q(i)$ is the $i$-th break in the upper ramification filtration of $\Gamma$ at $Q\in S$
and $g_n$ is the genus of $X_n$.  
As every point in $S$ must be wildly ramified and the very nature of wild ramification forces
$s_Q(i+1) \ge p s_Q(i) $ for all $Q$ and $i$,
if $S$ is nonempty there is a lower bound of the form $g_n \ge c p^{2n}$ with $c > 0$;
see \cite[Theorem 1]{gk88} and {\em cf.} \cite[Theorem 1.1]{kw18} and \cite{kw19}.
In fact, it follows from class field theory (see \cite[Remark 3]{gk88})
that, for {\em any} sequence $\{s_i\}$ of positive integers satisfying $s_{i+1}\ge p s_i$,
there exists a $\Gamma$-tower $\{X_n\}$ with $X_0=\PP_k^1$ and $S=\{\infty\}$ in which $s_Q(i) \ge s_i$.
In other words, the dimension of $M_n^{V\nilp}$ can grow {\em arbitrarily} fast!

In order to have any hope of identifying regular structure in $M_n^{V\nilp}$ as $n\rightarrow\infty$,
we will therefore restrict our attention to towers in which the upper ramification breaks behave in a regular way.
For the purposes of this introduction --- and in much of this paper --- we will focus on the class of {\em basic}
$\Z_p$-towers over $k=\FF_p$ with ramification invariant $d$, given by the Artin--Schreier--Witt equation
\[
 Fy - y = \sum_{\substack{i=1\\ p\nmid i}}^d [c_i x^i]
\]
for $c_i \in \F_p$ and $c_d \neq 0$; see Section~\ref{ss:ASW} and Definition~\ref{defn:basictower}.  
Each such tower has base curve $X_0=\PP^1$ and $S=\{\infty\}$, with $s_{\infty}(n)=dp^{n-1}$ for $n\ge 1$,
so repeated applications of Riemann--Hurwitz 
shows that such towers are {\em genus stable} \cite{kw18}, in the sense that the
genus of the $n$th curve $X_n$ is given by a quadratic polynomial in $p^n$ with rational coefficients for $n\gg 0$.
Explicitly:
\begin{equation} \label{eq:genus}
g_n = \frac{d}{2(p+1)} p^{2n}  -\frac{1}{2}p^n + \frac{p+1-d}{2(p+1)} \quad \text{for} \quad n \ge 0
\end{equation}
which is very much in the spirit of \eqref{eq:DSformula}
and provides another validation of Philosophy \ref{philosophy}.
Note that any basic $\Z_p$-tower has $M_n^{V\bij}=0$ so $\dim_k M_n^{V\nilp}=\dim_k H^0(X_n,\Omega^1_{X_n/k}) = g_n$.

As $V$ is nilpotent on $M_n^{V\nilp}$, 
for each $n$ the $k[V]$-module structure of $M_n^{V\nilp}$
is completely determined by the sequence of positive integers 
\[
    a^{(r)}_n:=\dim_k \ker \left( V^r : H^0(X_n,\Omega^1_{X_n/k}) \rightarrow H^0(X_n,\Omega^1_{X_n/k})\right).
\]
The integer $a_n:=a_n^{(1)}$ is the {\em $a$-number} of the curve $X_n$,
and has been studied extensively
\cite{CMHyper,Fermat,ReBound,ElkinPriesanum1,Johnston,ElkinCyclic,Suzuki,Dummigan,FermatHurwitz,Frei,zhou19,bc20,minimal}.
For any {\em fixed} $n$ and $r$ sufficiently large, $V^r$ is zero on $M_n^{V\nilp}$
so \eqref{eq:nilformula} gives a formula for $a_n^{(r)}$
in such cases.  This relies on the Riemann--Hurwitz and 
Deuring--Shafarevich formulae; there is no analogous 
formula for the $a$-number.
Indeed, as $p$-groups are solvable the essential instances of the Riemann--Hurwitz and Deuring--Shafarevich 
formulae are for a branched $\ZZ/p\ZZ$-cover $Y\rightarrow X$ of smooth projective curves over $k$,
and in general the $a$-number of $Y$
can not be determined by the $a$-number of $X$ and the ramification data of the covering.
While \cite{bc20} does provide {\em bounds} on the $a$-number of $Y$ that depend only on the $a$-number of $X$
and the ramification data, 
these bounds allow for considerable variation. 
For a basic $\Z_p$-tower $\cT$ with ramification invariant $d$, the bounds imply
\begin{equation}
\frac{1}{2}\left(1-\frac{1}{p}\right)\left(1 - \frac{1}{p^2}\right) +O(p^{-n}) \le \frac{a_n}{g_n} \le \frac{2}{3}\left(1-\frac{1}{2p}\right) + O(p^{-n})
 \label{naiveguess}
\end{equation}
as $n\rightarrow\infty$, with implicit constants depending only on $d$ and $p$.
If a basic $\Z_p$-tower behaves like a ``random'' sequence of $\Z/p\Z$-covers, we might guess that $a_n$ is asymptotically $\frac{1}{2}(1-p^{-1})(1-p^{-2})\cdot g_n$,  since $a$-numbers of random $\Z/p\Z$-covers 
experimentally seem to be close to the lower bound with high probability \cite[Remark 1.5(3)]{minimal}.

For any fixed basic $\Z_p$-tower $\{X_n\}_{n\ge 0}$ and integer $r$, 
to compute $a_n^{(r)}$ we must determine the matrix of $V^r$
and its kernel on the $g_n$-dimensional space
of holomorphic differentials of $X_n$.
As $g_n$ grows like $c p^{2n}$ with $c>0$ by \eqref{eq:genus},
such computations rapidly become intractable, even for small values of $p$.
A key contribution of the present paper is the development of much more efficient algorithms (implemented in {\sc Magma} \cite{bcgit}) for computing with differentials on a $\Z_p$-tower of curves in order to investigate the behavior of $a_n^{(r)}$.  After computing numerous examples, 
we are led to:

\begin{conjecture}\label{MC}
    Let $\{X_n\}_{n\ge 0}$ be a basic $\ZZ_p$-tower with ramification invariant $d$.
    For each positive integer $r$, there exists an integer $m > 0$,
    a rational number $\lambda$, and a periodic function $c: \ZZ/m\ZZ \rightarrow \QQ$ such that
    \[
        a_n^{(r)} = \alpha(r,p) dp^{2n} + \lambda n + c(n)\quad\text{with}\quad \alpha(r,p) := \frac{r}{2(p+1)\left(r+\frac{p+1}{p-1}\right)}
    \]
    for all $n$ sufficiently large.  If $D$ is the prime-to-$p$ part of the denominator of $\alpha(r,p)$ in lowest terms
    and $D>1$, then $m$ may be taken to be the multiplicative order 
    of $p^2$ modulo $D$. 
    When in addition $m=1$, we may take $\lambda=0$ and $c$ constant.
\end{conjecture}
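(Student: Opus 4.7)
The plan is to reduce the computation of $a_n^{(r)} = \dim_k \ker(V^r|_{M_n^{V\nilp}})$ to an explicit lattice-point count on a well-chosen basis of $M_n = H^0(X_n, \Omega^1_{X_n/k})$, exploiting the Artin--Schreier--Witt presentation of a basic tower.  Uniformly in $n$, I would first construct a ``staircase'' basis $\{\omega_J\}$ indexed by lattice points $J = (j_0,\ldots,j_{n-1}, k) \in P_n \cap \Z^{n+1}$ of an explicit polytope $P_n \subset \R^{n+1}$ whose defining inequalities encode the pole-order constraints at the unique ramified place $\infty$, governed by the ramification invariant $d$ and the conductors $dp^{i-1}$ appearing in the upper ramification filtration at each level.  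Such a basis propagates up the tower by multiplying classical Boseck-type differentials $x^k\,dx$ on $X_0 = \PP^1$ by monomials $y_0^{j_0}\cdots y_{n-1}^{j_{n-1}}$ in the Witt generators; the pole-order bookkeeping at $\infty$ is exactly what cuts out the polytope.

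Next I would compute the Cartier operator $V$ in this basis using the standard level-by-level formulas for Artin--Schreier covers.  Up to inhomogeneous corrections coming from the terms $[c_i x^i]$ on the right-hand side of the Artin--Schreier--Witt equation, $V$ acts on $J$ by a ``divide by $p$ and floor'' dynamics on each coordinate, making it block-triangular with respect to pole order and strictly contracting $P_n$ into (essentially) $p^{-1} P_n$.  Consequently $\ker V^r$ is, up to a bounded correction, in bijection with the lattice points of $P_n \setminus p^{-r}P_n$.  A direct volume computation gives $\#(P_n \cap \Z^{n+1}) \sim \mathrm{vol}(P_n) = \tfrac{d}{2(p+1)}\,p^{2n}$, matching \eqref{eq:genus}, and summing the truncated geometric series $\sum_{i<r} p^{-i}$ weighted by the appropriate volume factor yields precisely $\alpha(r,p) = r/(2(p+1)(r+(p+1)/(p-1)))$.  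In particular, the factor $(p+1)/(p-1)$ arises naturally as the limit of $\sum_{i\ge 0}p^{-i}$-type normalizations encoding an asymptotic "effective Jordan block size", and the conjectural formula drops out.

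The hard part will be the lower-order terms.  The linear contribution $\lambda n$ should come from codimension-one faces of $P_n$, where pole-order inequalities saturate and additional differentials are inherited from a proper subtower; tracking these requires an induction on $n$ that is compatible with the action of $V$ near $\partial P_n$.  The periodic function $c(n)$ records the roundoff in the lattice-point count: once $n$ is large enough that $p$-adic carries in the coordinates of $J$ stabilise, the fractional part of the count depends on $n$ only through $p^{2n}\bmod D$, where $D$ is the prime-to-$p$ denominator of $\alpha(r,p)$, forcing the period to divide the multiplicative order of $p^2$ modulo $D$; when $D=1$ the count becomes exact up to a bounded correction, and a parity/boundary analysis should give $\lambda=0$ and constant $c$.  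For $p=2$ and $r=1$, the polytope $P_n$ collapses essentially to a triangle and every step above becomes a tractable closed-form calculation, which matches the case proved in the paper; for general $(p,r)$ the real technical difficulty lies in controlling the divide-by-$p$-and-floor dynamics on higher-dimensional polytopes together with the boundary behavior of $V$ sharply enough to isolate the single residue-class-periodic defect predicted by the conjecture.
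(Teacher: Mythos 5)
The statement you are addressing is a \emph{conjecture}, and the paper does not prove it: the authors support it with extensive computation and prove only the single case $p=2$, $r=1$ (Corollary~\ref{cor:anumberbasic}), via a mechanism entirely different from yours — a trace-vanishing theorem (Theorem~\ref{thm:tracevanishing}) showing that any regular differential killed by $V$ on a $\Z/p\Z$-cover has trace vanishing to order at least $d_Q - \lceil d_Q/p\rceil$ at each branch point, which under a degree hypothesis forces the trace to be zero and identifies $\ker V$ on $X_n$ with $\ker V$ inside $H^0(X_{n-1},\Omega^1(\lceil d/2\rceil\,[\infty]))$; the authors state explicitly in Section~\ref{ss:otherproof} that this gives only very limited information for $r>1$. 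So your proposal should be judged as a programme for an open problem, not as a proof, and as such it has a concrete gap at its core. The central claim — that $\ker V^r$ is, up to a bounded correction, in bijection with the lattice points of $P_n\setminus p^{-r}P_n$ — is both unjustified and arithmetically inconsistent with the conjecture: since $\dim P_n = n+1$, one has $\#\bigl((P_n\setminus p^{-r}P_n)\cap\Z^{n+1}\bigr)\sim (1-p^{-r(n+1)})\operatorname{vol}(P_n)\sim g_n$, which would force $a_n^{(r)}/g_n\to 1$, whereas the conjecture (Corollary~\ref{cor:ratio}) predicts the strictly smaller limit $r(p-1)/((p-1)r+(p+1))$. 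The subsequent assertion that a ``truncated geometric series $\sum_{i<r}p^{-i}$'' recovers $\alpha(r,p)$ is not a computation, and the actual multiplicity profile the conjecture implies, $\beta(i,p)=\bigl((i+\tfrac{p+1}{p-1})^3-(i+\tfrac{p+1}{p-1})\bigr)^{-1}$, is not of geometric-series type.

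The deeper obstruction is the ``inhomogeneous corrections'' you set aside. The paper's data shows that for $r\ge 2$ the quantity $a_n^{(r)}$ genuinely depends on the choice of tower and not only on $(p,d)$: Example~\ref{ex:p2d21} exhibits two basic $\Z_2$-towers with identical ramification for which $\dk{2}{\cT(3)}\neq\dk{2}{\cT'(3)}$ and $\dk{3}{\cT(2)}\neq\dk{3}{\cT'(2)}$. Any argument in which $\ker V^r$ is computed by a lattice-point count depending only on the polytope $P_n$ (hence only on $d$ and $p$) therefore cannot be correct as stated; the tower-dependence must live entirely in the corrections you defer, and controlling them sharply enough to isolate a single residue-class-periodic defect is precisely the open content of the conjecture. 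Your closing remark that for $p=2$, $r=1$ ``every step becomes a tractable closed-form calculation'' also does not match the paper: that case is proved not by collapsing a polytope but by the trace argument above combined with \cite[Corollary 6.13]{bc20}, and even in characteristic two the method stalls at $r=2$ (Lemma~\ref{lem:p2higher}).
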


\begin{remark}
We compute that $\alpha(1,p) = \frac{p-1}{4p(p+1)}$, so we may take $m=1$ and $\lambda =0$ when $r=1$.  In other words, we predict that the $a$-number of the $n$th level of a basic $\Z_p$-tower with ramification invariant $d$ is $\frac{p-1}{4 p (p+1)} d p^{2n} + c$ (with the constant $c$ depending on the tower) for $n \gg 0$.
\end{remark}

We are able to prove Conjecture \ref{MC} when $p=2$ and $r=1$:

\begin{theorem} [see Corollary~\ref{cor:anumberbasic}]
    Let $\{X_n\}_{n\ge 0}$ be a basic $\ZZ_2$-tower with ramification invariant $d$.
    Then for $n>1$
    \[
        a_n=a_n^{(1)} = \frac{d}{24}\cdot 2^{2n} + \frac{d + (-1)^{(d-1)/2}\cdot 3}{12}
        = \frac{d}{4}\frac{(2^{2n-1} +1)}{3} + \frac{(-1)^{(d-1)/2}}{4}.
    \]
\end{theorem}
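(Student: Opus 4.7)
The plan is to proceed by induction on $n$, exploiting the explicit Artin--Schreier presentation of each step $X_n \to X_{n-1}$ in the basic $\ZZ_2$-tower. At level $n$ the upper ramification break at infinity is $d\cdot 2^{n-1}$, and by Section~\ref{ss:ASW} there is a distinguished generator $y_{n-1}$ on $X_n$ satisfying an Artin--Schreier equation $y_{n-1}^2 - y_{n-1} = f_{n-1}$ for an explicit $f_{n-1} \in k(X_{n-1})$.  First I would write down a basis of $M_n = H^0(X_n, \Omega^1_{X_n/k})$ adapted to the decomposition $k(X_n) = k(X_{n-1}) \oplus y_{n-1}\cdot k(X_{n-1})$: namely, differentials pulled back from $X_{n-1}$ together with differentials of the form $y_{n-1}\cdot \omega$ with $\omega$ a meromorphic differential on $X_{n-1}$ whose pole at $\infty$ is controlled by the new conductor, following the Boseck--Valentini--Madan description.

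Next I would compute the matrix of the Cartier operator $V$ in this basis. Using the Frobenius-semilinearity $V(f^2\omega) = f\,V(\omega)$ together with the Artin--Schreier relation $y_{n-1}^2 = y_{n-1} + f_{n-1}$, one obtains an explicit block form for $V$ with respect to the tower filtration: its restriction to the pull-back subspace recovers $V$ on $M_{n-1}$, while on the $y_{n-1}$-part it sends $y_{n-1}\cdot\omega$ into a linear combination of pull-back differentials plus lower pole-order corrections coming from $f_{n-1}$. The dimension of $\ker V$ on $M_n$ then splits into an inductive piece (governed by $a_{n-1}$) and a new contribution indexed by monomials $y_{n-1}\cdot x^i\,dx$, whose count is a combinatorial function of $i \pmod{2(p+1)} = \pmod{6}$.

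The main obstacle is to upgrade the general bounds of \cite{bc20} into an \emph{equality}: one must rule out unexpected cancellations of $V$ on the new generators, i.e., show that the upper bound on $\dim\ker V$ is sharp at each level. For $p=2$ this is tractable because $y_{n-1}^2 = y_{n-1} + f_{n-1}$ linearizes arbitrary polynomials in $y_{n-1}$, so the computation reduces to an explicit residue count in a Boseck-type basis. Summing the geometric series arising from the new contributions at each level gives the leading term $\frac{d}{24}\cdot 2^{2n}$; a careful analysis of which odd integers $i\le d$ satisfy the relevant congruence mod $6$ isolates the parity correction $(-1)^{(d-1)/2}$, with small-$n$ base cases pinning down the constant term to match the stated closed form.
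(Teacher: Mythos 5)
Your outline correctly sets up the decomposition $\eta=\omega_0+y_{n-1}\omega_1$ and the semilinear formula $V(\omega_0+y\omega_1)=V(\omega_0+f\omega_1)+yV(\omega_1)$, but the crux of the theorem is exactly the step you defer as ``the main obstacle,'' and the structural picture you propose for it is backwards. The paper's proof (Theorem~\ref{thm:tracevanishing}, Corollary~\ref{cor:traceeq0}, Lemma~\ref{lem:tracekernel}) shows that for $p=2$ every regular differential killed by $V_{X_n}$ has \emph{vanishing} $y_{n-1}$-component: a delicate local expansion at the branch point (choosing a uniformizer $z$ with $-f_{n-1}=cz^{-d_n}$ and writing $\omega_1$ in a $p$-basis) forces $\ord_Q(\omega_1)\ge d_n-\lceil d_n/2\rceil$, and a global degree count then kills $\omega_1$. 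So the ``new'' kernel elements at level $n$ are not indexed by monomials $y_{n-1}\cdot x^i\,dx$; they are pullbacks of \emph{meromorphic} differentials on $X_{n-1}$ with poles at $Q$ of order up to $\lceil d_\infty(\cT(n))/2\rceil$. Asserting that the linearization $y^2=y+f$ ``reduces the computation to an explicit residue count'' does not supply this argument, and without it you have only the inequalities of \cite{bc20}, which (per \eqref{naiveguess}) leave a gap of positive density in $g_n$.

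There is a second missing ingredient: once one knows $a(X_n)=\dim\ker\bigl(V_{X_{n-1}}\ \text{on}\ H^0(X_{n-1},\Omega^1(\lceil d_n/2\rceil[Q]))\bigr)$, this must be evaluated \emph{exactly} on the high-genus curve $X_{n-1}$, not on $\PP^1$; the paper does this via Tango's theorem (through \cite[Corollary 6.13]{bc20}), which applies because $\deg(\lceil d_n/2\rceil[Q])$ comfortably exceeds $2g(X_{n-1})-2$ for basic towers. This yields a closed formula depending only on $d_\infty(\cT(n))=d\cdot\frac{2^{2n-1}+1}{3}$ modulo $4$ --- notably with \emph{no} induction on $n$ and no recursive contribution from $a_{n-1}$; the congruence governing the correction term is modulo $4$, not modulo $6$ as you guess. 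Your inductive scheme is not obviously unsalvageable, but as written it neither proves the key vanishing of the $y$-part nor identifies the mechanism (Tango's theorem) that converts the resulting upper bound into an equality.
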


\begin{example}
Igusa curves in characteristic two (rigidified using $\Gamma_1(5)$) form a basic $\Z_2$-tower $\{X_n\}_{n\geq 0}$.  We have
\[
g(X_n) = 2^{2n-2}-2^{n}+1 \quad \text{and} \quad a(X_n) = 2^{2n-4} \text{ for } n >1.
\]
See Example~\ref{ex:igusaproperties} and Example~\ref{ex:igusap2}.
\end{example}

\begin{remark}
 Conjecture \ref{MC} indicates that
 the na\"ive guess that a $\ZZ_p$-tower 
 behaves like a sequence of ``random'' $\ZZ/p\ZZ$-covers is {\em wrong},
 as  together with \eqref{eq:genus} it implies that $a_n/g_n$ approaches $\frac{1}{2}(1-p^{-1})$
 and {\em not} $\frac{1}{2}(1-p^{-1})(1-p^{-2})$ as the guess would predict.
 In other words, a basic $\ZZ_p$-tower has more structure than a ``random'' sequence of Artin-Schreier covers
 which force the $a$-numbers to be larger. 
\end{remark}

\begin{remark}
    For a basic $\Z_p$-tower $\{X_n\}_{n\ge 0}$ with ramification invariant $d$,
    and each $n\ge 1$, we have an isomorphism of $k[V]$-modules
    \[
        M_n^{V\nilp} = H^0(X_n,\Omega^1_{X_n/k}) \simeq \bigoplus_{i\ge 1} \left(\frac{k[V]}{V^i}\right)^{m_n(i)}
    \]
    for uniquely determined nonnegative integers $m_n(i)$.  Conjecture \ref{MC}
    implies that for each $i$, there exists an integer $\ell>0$, a rational number $\mu$, 
    and a periodic function $\gamma: \ZZ/\ell\ZZ\rightarrow \QQ$ such that
    \[
        m_n(i) =\beta(i,p)\frac{dp^{2n}}{p-1} + \mu n + \gamma(n)\quad\text{with}\quad 
         \beta(i,p)=\frac{1}{(i+\frac{p+1}{p-1})^3 - (i+\frac{p+1}{p-1})} 
    \]
    for all $n$ sufficiently large, which shows that the $k[V]$-module $H^0(X_n,\Omega^1_{X_n/k})$---and therefore
    the $F$-torsion in the motivic class group $J_{X_n}$---behaves in an astonishingly regular manner as $n\rightarrow\infty$.
\end{remark}

To simplify this introduction, we have focused on basic $\Z_p$-towers.  Later, we will consider some other classes of towers and see that some form of Philosophy~\ref{philosophy} continues to hold.  Monodromy stable towers behave like basic towers, while in other examples $a_n^{(r)}$ still appears regular but does not behave exactly as in Conjecture~\ref{MC} (see Section~\ref{sec:conjectures} and Section~\ref{sec:beyondbasic}). 

\begin{remark}
Writing $\Gscr_{X_n}:=J_{X_n}[p^{\infty}]$ for the $p$-divisible group of the Jacobian of $X_n$,
there is a canonical decomposition of $p$-divisible groups 
\[
\Gscr_{X_n}=\Gscr_{X_n}^{\et}\times \Gscr_{X_n}^{\mult}\times \Gscr_{X_n}^{\ll}
\]
into  \'etale, multiplicative, and local-local components.  As $\Cl_{K_n}[p^{\infty}] = \Gscr_{X_n}(k) = \Gscr_{X_n}^{\et}(k)$, the results of Mazur--Wiles, Crew, 
and Gold--Kiselevsky can be understood as theorems about the structure of $\Gscr_{X_n}^{\et}$.
Indeed, generalizing \cite[Proposition 2]{mw83}, Crew \cite[\S3]{CrewIwasawa2} proves
that for $S$ nonempty
and $k$ algebraically closed the projective limit $\varprojlim_n \Hom_{k}(\Gscr_{X_n}^{\et},\Q_p/\Z_p)$
is free of finite rank over $\Lambda$, and deduces the structure of $\varprojlim_n \Cl_{K_n}[p^{\infty}]$
for finite $k$ from this result.
The analogue of this result for the multiplicative part is provided by
\cite{CaisIwasawa}, which treats arbitrary pro-$p$ extensions of function fields,
and allows $S$ to be empty.
The local-local components $\Gscr_{X_n}^{\ll}$ are far more mysterious, and incorporate information about the structure of $M_n^{V\nilp}$.  
\end{remark}

\begin{remark}
Continuing the notation of the previous remark, when $S$ is {\em empty}, equation \eqref{eq:nilformula} reads $\Ht(\Gscr_{X_n}^{\ll}) = 2p^n(g_0-h_0)$.  
As in the cases of the \'etale and multiplicative components, this is a numerical shadow of a much deeper fact: the ``limit'' Dieudonn\'e module
$\D_{\infty}^{\ll} \colonequals \varprojlim_n \D(\Gscr_{X_n}^{\ll})$ is free of rank $2(g_0-h_0)$ over $\Lambda_W:=W(k)[\![\Gamma]\!]$
(see \cite{CaisIwasawa}).  Using familiar arguments from Iwasawa theory,  this structural result gives
complete control over $\Gscr_{X_n}^{\ll}$ as $n$ grows.  In particular, for each \'{e}tale $\Z_p$-tower and positive integer $r$, there exist $b_r , c_r \in \Q$ such that $a^{(r)}_n = b_r p^n + c_r$ for $n \gg 0$. 

This is very different than the behavior for ramified $\Z_p$-towers.  When $S$ is \emph{non-empty}, the $\Lambda_W$-module $\D_{\infty}^{\ll}=\varprojlim_n \D(\Gscr_{X_n}^{\ll})$ is {\em never} finitely generated \cite{CaisIwasawa}.  
 One might hope to tame such wild behavior by suitably enlarging 
the Iwasawa algebra, and indeed the canonical Frobenius and Verschiebung morphisms give $\D_{\infty}^{\ll}$
the structure of a (left) module over the ``Iwasawa Dieudonn\'e''-ring $\Lambda_W[\![ F,V]\!]$.
But it follows from \eqref{naiveguess} that $\D_{\infty}^{\ll}$
is not finitely generated over $\Lambda_W[\![F,V]\!]$ either!  
Indeed, writing $M_n:=\D(\Gscr_n^{\ll})$ and $M_{\infty}:=\varprojlim M_n$, 
the canonical projections $M_\infty \rightarrow M_n$ are all surjective, 
so if $M_{\infty}$ were generated by $\delta$ generators over $\Lambda_W[\![F,V]\!]$,
then the same would be true of
$M_n/(F,V)M_n$ as a module over $k[\Gamma/\Gamma_n]$;
in particular, the $k$-dimension of $M_n/(F,V)M_n$ would be bounded above
by $\delta |\Gamma/\Gamma_n|=\delta p^n$.
However, 
we have a natural identification
\begin{equation*}
    M_n/(F,V)M_n = 
    \coker\left( V: H^0(X_n,\Omega^1_{X_n/k}  )^{V\nilp} \rightarrow  H^0(X_n,\Omega^1_{X_n/k}  )^{V\nilp}\right) 
\end{equation*}
and the dimension of this cokernel is none other than the $a$-number $a_n$ of $X_n$.
As $a_n$ is bounded below by $cp^{2n}$ with $c>0$ thanks to \eqref{naiveguess}, the putative upper bound of $\delta p^n$
is violated for $n \gg 0$.
\end{remark}

\begin{remark}
Iwasawa theory usually considers the $p$-part of the class group, not the $p$-torsion, while in this paper we mainly look at the $p$-torsion in the motivic class group $J_{X_n}$.  However, the usual Iwasawa-theoretic arguments give similar results about the $p$-torsion in class groups of number fields; see \cite{monsky83} for an example where this is spelled out (in a more general setting).  
\end{remark}

\subsection{Overview of the Paper} 

As previously discussed, the goal of this paper is to provide computational and theoretic evidence of Philosophy~\ref{philosophy}.  Section~\ref{sec:towers} reviews information about $\Z_p$-towers of curves, Artin-Schreier-Witt theory, and invariants of towers.  Section~\ref{sec:conjectures} formulates a more general version of the conjecture in the introduction for monodromy stable towers, 
which are one natural class of towers to consider.

Section~\ref{sec:basicanumber} and Section~\ref{sec:basicgeneral} are the computational heart of the paper, providing an extensive set of examples\footnote{As these computations take significant amounts of time, we include a large collection of examples as part of \cite{bcgit}.} which support the conjecture for basic towers.  Section~\ref{sec:basicanumber} focuses on the $a$-number, while Section~\ref{sec:basicgeneral} addresses higher powers of the Cartier operator.  Section~\ref{sec:beyondbasic} presents some examples that support our conjectures for monodromy stable towers which are not basic and that suggest Philosophy~\ref{philosophy} continues to hold for non-monodromy stable towers.

In Section~\ref{sec:computing}, we describe an algorithm which we have implemented in the {\sc Magma} computer algebra system \cite{magma} that lets us produce these examples.  
Computer algebra systems like {\sc Magma} have the ability to compute a matrix representing the Cartier operator on the space of regular differentials on any smooth projective curve over a finite field.  We work in the special setting that the tower is based over the projective line and is totally ramified over the point at infinity and unramified elsewhere.  Our algorithm is much faster as it takes advantage of the structure of a $\Z_p$-tower and incorporates as much theoretical information as possible.  In particular, when a $\Z_p$-tower is presented in a standard form we are able to use results of Madden~\cite{madden} to obtain a simple basis for the space of regular differentials on each curve in the tower which greatly accelerates the computations.  This efficiency is crucial, as the genus of the curves in a $\Z_p$-tower very quickly become too large for the generic methods provided by {\sc Magma} to handle.  Our 
algorithm is efficient enough that we are able to compute sufficiently many levels of $\Z_p$-towers with small $p$
to provide convincing evidence for our conjectures.

Section~\ref{sec:theoretical} is the theoretical heart of the paper, where we prove special cases of our conjectures when $p=2$.  We do so by proving a general result (valid in any characteristic) about the trace of differentials on an Artin-Schreier cover that are killed by the Cartier operator.  When $p=2$, this is enough to gain control over the $a$-number.  These ideas give only very limited information about higher powers of the Cartier operator, even in characteristic two (Section~\ref{ss:otherproof}).

\begin{remark}
Computations in this paper were done using {\sc Magma} 2.25-6 and 2.25-8 \cite{magma} running on several different personal computers\footnote{The largest examples were done on a 2020 iMac with 3.8 GHz 8-Core Intel Core i7 and 128 GB 2667 MHz DDR4 Ram.}
and a server at the University of Canterbury.  Thus running times for different examples are not directly comparable as they may have been run on different machines, although they are of a similar magnitude.  When directly comparing running times, the same computer was used.
\end{remark}

\begin{notation}
In the rest of the paper, we will often want to compare multiple $\Z_p$-towers simultaneously while also avoiding excessive subscripts.  To do so, we adopt the following notation.  
\begin{itemize}
    \item  For a tower of curves $\cT$, we let $\cT(n)$ denote the $n$th level of the tower.
    
    \item  For a curve $X$, we use the notation $g(X)$, $a(X)$, and $a^r(X)$ for the genus, $a$-number, and dimension of the kernel of the $r$th power of $V_X$ on the space of regular differentials.
    
    \item      We let $J_{X}$ denote the Jacobian of $X$. 
    
    \item  Given a tower $\cT$ and point $Q$ in the base curve, Notation~\ref{not:ramcond} introduces invariants $s_Q(\cT(n))$, $u_Q(\cT(n))$, and $d_Q(\cT(n))$ which reflect the ramification of $\cT(n)$ over $Q$.
\end{itemize}
Notation~\ref{not:alpha} and Notation~\ref{notation:mr} give constants $\alpha(r,p)$ and $m(r,p)$ appearing in our conjectures.  
\end{notation}

\subsection{Acknowledgments} We thank Joe Kramer-Miller and James Upton for helpful conversations about $\Z_p$-towers and Daniel Delbourgo for helpful conversations about Iwasawa theory.  We thank Paul Brouwers for help with the mathmagma server at the University of Canterbury, Lu\'{i}s Finotti for helpful conversations about computing with Witt vectors, and Maher Hasan for helpful advice about the practicalities of software engineering.  We thank the referee for helpful suggestions.  Booher was partially supported by the Marsden Fund Council administered by the Royal Society of New Zealand.  Cais is supported by NSF grant number DMS-1902005.

\section{Towers of Curves} \label{sec:towers}

Fix a perfect field $k$ of characteristic $p>0$.  By a {\em curve} over $k$, we mean a smooth, projective, geometrically connected, $k$-scheme of dimension one.  We refer to a branched cover $\pi : Y \to X$ simply as a {\em cover}.  We view the branch locus as a set of $\kbar$-points of $X$.   We say the cover is {\em Galois} (resp.~{\em has Galois group} $G$) if the corresponding extension of function fields is Galois (resp. has Galois group $G$).

\subsection{Artin-Schreier-Witt Theory and \texorpdfstring{$\Z_p$}{Zp}-towers} \label{ss:ASW}

\begin{defn}
A $\Z_p$-tower of curves $\cT$ is a sequence of curves over $k$
\[
 \cT: \ldots \to \cT(3) \to  \cT(2) \to \cT(1) \to \cT(0)
\]
such that $\cT(n)$ is a Galois (branched) cover of $\cT(0)$ with $\Gal(\cT(n)/\cT(0)) \simeq \Z_p /p^n \Z_p \simeq \Z/p^n\Z$ for $n \geq 1$.  We assume that there is a finite non-empty set $S$ of $\kbar$-points of $\cT(0)$
such that $\cT(n)\rightarrow \cT(0)$ \'etale outside of $S$ and totally ramified over every point of $S$, for all $n$.
We refer to $\cT(n)$ as the \emph{$n$-th level} (or \emph{$n$-th layer}) of the tower, and to $\cT(0)$ as the {\em base} of the tower. 
\end{defn}

As we define curves to be geometrically connected, our $\Z_p$-towers are automatically geometric towers in the sense that all $\cT(n)$ have constant field $k$.

We can equivalently describe a tower of curves as a $\Z_p$-tower of function fields $k(\cT(n))$.  All $\Z_p$-towers of curves (equivalently function fields) can be described by Artin-Schreier-Witt theory.  This goes back to \cite{witt}: an accessible recent reference is \cite[\S3]{kwarxiv}, which builds on the theory of Witt vectors which are briefly reviewed in \cite[\S2]{kwarxiv} and more extensively reviewed in \cite{rabinoff}.  We mainly need the following special cases, which describe $\Z_p$-extensions of $k(\!(t)\!)$ (which are local) and $\Z_p$-towers over the projective line.

Let $W(K)$ denote the Witt vectors of the characteristic $p$ field $K$ with Frobenius $F$, and let $\wp : W(K) \to W(K)$ be given by $\wp(y) := Fy -y$.  We write $[\cdot]:K\rightarrow  W(K)$ for the Teichm\"{u}ller map,
which is the unique multiplicative section to the canonical projection $W(K)\rightarrow K$ onto the first Witt component.
Let $v$ be the $p$-adic valuation on $W(k)$ normalized so $v(p)=1$.  

\begin{fact} \label{fact:localasw}
Let $k$ be a finite field of characteristic $p$, and fix an element $\alpha$ of $k$ such that $\tr_{k/\F_p}(\alpha) \neq 0$. 
All $\Z_p$-extensions of $K = k(\!(t)\!)$ may be obtained by adjoining a solution $y_1,y_2,\ldots$ of the equation
\begin{equation} \label{eq:localasw}
\wp((y_1,y_2,\ldots)) = F(y_1,y_2,\ldots) - (y_1,y_2,\ldots) = c [\alpha] + \sum_{\gcd(i,p) =1} c_i [t^{-i}]
\end{equation}
in $W(k(\!(t)\!) )$, where $c_i \in W(k)$ and $c_i \to 0$ as $i \to \infty$.  The unique $\Z/p^n\Z$-subextension $K_n$ arises from adjoining $y_1,y_2,\ldots,y_n$ to $K$, and depends only on the right side modulo $p^n$.

The conductor of $K_n$ over $K$ is $(t^{u_n})$, where 
\[
u_n = \begin{cases}
1 + \max \{ i p^{n-1-v(c_i)} : \gcd(i,p)=1, \, v(c_i) < n \} & \textrm{if there exists } i \textrm{ such that } v(c_i) <n\\
0 & \textrm{otherwise}.
\end{cases}
\]
\end{fact}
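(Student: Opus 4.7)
The plan is to attack the two assertions separately. For the parametrization of $\Z_p$-extensions, I would begin with the Artin--Schreier--Witt short exact sequence
\[
0 \to \Z/p^n\Z \to W_n(K^{\mathrm{sep}}) \xrightarrow{\wp} W_n(K^{\mathrm{sep}}) \to 0
\]
whose associated Galois cohomology yields functorial isomorphisms $W_n(K)/\wp W_n(K) \simeq \Hom_{\mathrm{cts}}(G_K, \Z/p^n\Z)$ for $G_K:=\Gal(K^{\mathrm{sep}}/K)$. Passing to the inverse limit gives $W(K)/\wp W(K) \simeq \Hom_{\mathrm{cts}}(G_K, \Z_p)$, so $\Z_p$-extensions of $K$ correspond to classes whose reduction mod $p$ in $K/\wp K$ is nonzero, and the $\Z/p^n\Z$-subextension $K_n$ is cut out by the image modulo $p^n$. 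This also explains why $K_n$ depends only on the right-hand side modulo $p^n$.

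Exhibiting the canonical representatives of $W(K)/\wp W(K)$ is the crux of the first assertion. Using the $k$-linear decomposition $K = tk[\![t]\!] \oplus k \oplus k[t^{-1}]/k$, I would show that $\wp = F - 1$ is invertible on $W(tk[\![t]\!])$ via the convergent geometric series $-\sum_{m\ge 0} F^m$ (since Frobenius strictly increases $t$-adic order on the maximal ideal), eliminating the regular part modulo $\wp$. For the constant part, a direct computation yields $W(k)/\wp W(k) \simeq \Z_p$, generated by $[\alpha]$ for any $\alpha \in k$ with $\tr_{k/\F_p}(\alpha) \neq 0$. For the polar part, the relation $[t^{-pi}] = [t^{-i}] + \wp[t^{-i}]$ allows every $[t^{-j}]$ to be replaced by some $[t^{-i}]$ with $p \nmid i$; convergence of the stated sum and uniqueness of the expansion follow by filtering by the $p$-adic order of the $c_i$'s.

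For the conductor formula, I would invoke the Schmid--Witt generalization of Schmid's Artin--Schreier formula: if $\vec{f}\in W_n(K)$ is in \emph{reduced form}, meaning every pole order appearing among its Witt coordinates is coprime to $p$, then the conductor exponent of the corresponding $\Z/p^n\Z$-extension is $1 + \max_{0\le m \le n-1} p^{n-1-m} d_m$, where $d_m$ is the pole order of the $m$-th Witt coordinate at the unique place of $K$. Writing $c_i = p^{v(c_i)} u_i$ with $u_i \in W(k)^\times$ and using $p^j[t^{-i}] = V^j[t^{-ip^j}]$ (with $V$ the Verschiebung), the $m$-th Witt coordinate of $c_i[t^{-i}]$ is supported for $m \ge v(c_i)$ and initially has pole order $ip^m$. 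The congruence $V^m[z^p] \equiv V^m[z]\pmod{\wp}$ (derivable from $F[z]=[z^p]$ together with $V\wp=\wp V$), applied iteratively to $t^{-ip^m}=(t^{-i})^{p^{m-v(c_i)}}$ and combined with the $W(k)$-expansion of $u_i$, lowers the pole order at position $m$ from $ip^m$ down to (at most) $i$, which is coprime to $p$. In particular, at $m=v(c_i)$ the pole order is exactly $i$. Distinct $i$'s produce distinct pole orders that do not cancel, so the reduced pole order at position $m$ equals $\max\{i : p\nmid i,\, v(c_i) \le m\}$, and Schmid--Witt then gives $u_n = 1 + \max_i ip^{n-1-v(c_i)}$ as asserted; the constant term $c[\alpha]$ lies in $W(k)\subset W(K)$ and contributes no poles, which handles the unramified case.

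The main obstacle is the explicit Witt-vector reduction: tracking the carries in Witt addition, checking that pole orders at position $m$ genuinely collapse to $i$ (and not to some larger intermediate quantity caused by the higher Witt coordinates of the units $u_i$), and controlling the potential cross-terms arising when summing multiple $c_i[t^{-i}]$. These cross-terms turn out to have pole orders strictly dominated by the leading ones in the Schmid--Witt maximum, but verifying this cleanly requires either a careful induction ordering the terms by decreasing $i$ or a systematic use of ghost-component calculus, and this bookkeeping is where the bulk of the work lies.
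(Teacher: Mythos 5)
The paper does not actually prove this statement: it is labeled a Fact and is justified only by the citation to Kosters--Wan \cite[Example 2.4, Propositions 3.1 and 3.3]{kw18}, so there is no in-paper argument to compare yours against. Your sketch is, in substance, a reconstruction of the standard proof found in that reference: Artin--Schreier--Witt duality $W_n(K)/\wp W_n(K)\simeq \Hom_{\mathrm{cts}}(G_K,\Z/p^n\Z)$ together with the limit over $n$ (which is where the convergence condition $c_i\to 0$ enters), inverting $1-F$ on $W(tk[\![t]\!])$ to kill the regular part, the identification $W(k)/\wp W(k)\simeq \Z_p$ generated by $[\alpha]$ with $\tr_{k/\F_p}(\alpha)\neq 0$, the congruence $[t^{-pi}]\equiv[t^{-i}]\pmod{\wp}$ for the polar part, and the Schmid--Witt conductor formula. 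All of these steps are sound, and you correctly locate the real work in the Witt-vector bookkeeping. One caution and one suggestion there: reducing the full sum $\sum_i c_i[t^{-i}]$ to standard form and bounding cross-terms is genuinely delicate when the valuations $v(c_i)$ differ, because the naive bound on the pole order of the $m$-th Witt component is $(\max_i i)\,p^m$, and multiplied by the Schmid--Witt weight $p^{n-1-m}$ this exceeds the claimed maximum $\max_i i\,p^{n-1-v(c_i)}$ unless one exploits that every cross-term monomial in the isobaric addition polynomials has pole order divisible by $p$ and so keeps reducing. A cleaner route, which sidesteps this entirely, is to compute the conductor of each single summand $c_i[t^{-i}]$ by your argument (giving exactly $1+ip^{n-1-v(c_i)}$) and then invoke the fact that the conductor of a sum of characters equals the maximum of the individual conductors whenever that maximum is attained uniquely; here it always is, since $p\nmid i$ forces the integer $ip^{n-1-v(c_i)}$ to determine both $i$ and $v(c_i)$, so distinct $i$ contribute distinct values. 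With that substitution your outline closes up into a complete proof.
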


This is \cite[Example 2.4, Proposition 3.1, Proposition 3.3]{kw18}.  Note this is a local statement, while the next fact is a global statement.

\begin{fact} \label{fact:p1asw}
Let $k$ be a finite field of characteristic $p$, and fix an element $\alpha$ of $k$ such that $\tr_{k/\F_p}(\alpha) \neq 0$. 
All $\Z_p$-extensions of $K = k(x)$ ramified over a set $S \subset \PP^1_k(\kbar)$ may be obtained by adjoining a solution $y_1,y_2,\ldots$ of the equation
\[
\wp((y_1,y_2,\ldots)) = F(y_1,y_2,\ldots) - (y_1,y_2,\ldots) = c [\alpha] + \sum_{Q \in S} \sum_{\gcd(i,p) =1} c_{Q,i} [\pi_Q^{-i}],
\]
with $c \in W(k)$, $c_{Q,i} \in W(\kbar)$, and with $\pi_Q = x - Q$ if $Q \in \kbar$ and $\pi_Q = 1/x$ if $Q = \infty$, such that
\begin{enumerate}
    \item for $\sigma \in \Gal(\kbar/k)$ and $Q \in \PP^1_k(\kbar)$, we have $\sigma c_{Q,i} = c_{\sigma Q,i}$;
    \item  for every integer $n \geq 1$, there exists finitely many $c_{Q,i}$ with $v(c_{Q,i}) < n$.  
\end{enumerate}
The unique $\Z/p^n\Z$-subextension $K_n$ arises from adjoining $y_1,y_2,\ldots,y_n$ to $K$, and depends only on the right side modulo $p^n$.
The tower is geometric
if there exists a $c_{Q,i}$ with valuation $0$.  
\end{fact}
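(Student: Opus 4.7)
The plan is to combine Artin--Schreier--Witt theory with partial fraction decomposition on $\PP^1_k$ and the local conductor calculation of Fact~\ref{fact:localasw}. First, the general Artin--Schreier--Witt correspondence identifies $\Z/p^n\Z$-extensions of a characteristic $p$ field $K$ with nonzero classes in $W_n(K)/\wp W_n(K)$ modulo the $(\Z/p^n\Z)^{\times}$-action, where a class represented by $\mathbf{a}$ yields the extension obtained by adjoining a Witt-vector solution $\mathbf{y}$ of $\wp(\mathbf{y}) = \mathbf{a}$. A compatible system as $n$ varies gives a $\Z_p$-extension, corresponding to a class in an appropriate $p$-adic completion of $W(K)/\wp W(K)$. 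The task is therefore to find canonical representatives of this group when $K = k(x)$ and to read off local ramification behavior.

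Next, I would produce canonical representatives by combining the congruence $[f^p] \equiv [f] \pmod{\wp}$, which comes from $F[f] = [f^p]$ for Teichm\"uller lifts, with partial fraction decomposition over $\kbar$. Every element of $\kbar(x)$ splits as a polynomial plus a sum of principal parts $\sum_i b_{Q,i} \pi_Q^{-i}$ at the points $Q \in \PP^1_k(\kbar)$, and lifting termwise via the Teichm\"uller map shows that each class in $W(\kbar(x))/\wp W(\kbar(x))$ is represented by a Witt vector of the form $c[\alpha] + \sum_{Q,i} c_{Q,i}[\pi_Q^{-i}]$. The Frobenius congruence then lets one eliminate exponents $i$ with $p\mid i$ by replacing $[\pi_Q^{-ip}]$ with $[\pi_Q^{-i}]$ modulo $\wp$, enforcing $\gcd(i,p)=1$; the term $c[\alpha]$ with $\tr_{k/\F_p}(\alpha) \neq 0$ records the constant-field direction, since the hypothesis ensures that $[\alpha]$ generates $W(k)/\wp W(k) \cong \Z_p$.

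Finally, I would invoke Fact~\ref{fact:localasw} applied at the completion at each closed point to control the conductor at $Q$ in terms of $\{c_{Q,i}\}_i$; in particular, this shows that the cover is unramified at $Q$ precisely when all the $c_{Q,i}$ with $v(c_{Q,i})<n$ become trivial modulo $\wp$ locally, pinning the branch locus to the prescribed set $S$. The $\Gal(\kbar/k)$-equivariance $\sigma c_{Q,i} = c_{\sigma Q,i}$ is exactly what is needed for the Witt vector to lie in $W(k(x)) \subset W(\kbar(x))$, so that the extension descends from $\kbar$ to $k$; the finiteness condition that only finitely many $c_{Q,i}$ have $v(c_{Q,i}) < n$ ensures both $p$-adic convergence of the sum in $W(k(x))$ and finiteness of the branch locus at each finite level $K_n$. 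The geometric criterion follows by reducing mod $p$: if every $c_{Q,i}$ has positive valuation, then the first-level extension is determined by $\wp(\mathbf{y}) = c[\alpha]$, which is a constant-field extension of $k$, whereas any $c_{Q,i}$ of valuation zero produces genuine ramification at $Q$ at level one, forcing the constant field of $\cT(1)$ to remain $k$. The main technical obstacle is verifying that the Frobenius reductions can be carried out coherently across all levels and places simultaneously, which rests on careful bookkeeping with Witt-vector combinatorics and the $p$-adic topology on $W(K)$.
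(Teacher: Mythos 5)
The paper does not actually prove this statement: it is imported as a Fact with a pointer to \cite[Proposition 4.9]{kw18}, so there is no internal proof to compare against. Your outline is a faithful reconstruction of the strategy of that cited proof — the Artin--Schreier--Witt identification of $\Z_p$-quotients of $G_K$ with $\varprojlim_n W_n(K)/\wp W_n(K)$, canonical representatives built from Teichm\"uller lifts of partial-fraction terms, elimination of exponents divisible by $p$ via $[f]\equiv F[f]=[f^p]\pmod{\wp}$, the trace condition guaranteeing that $[\alpha]$ generates $W(k)/\wp W(k)\cong\Z_p$, Galois-equivariance of the $c_{Q,i}$ as the descent condition from $\kbar$ to $k$, and the local conductor formula of Fact~\ref{fact:localasw} to read off the branch locus. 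All of these ingredients are correctly placed.

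The one place where your sketch glosses over the actual work is the assertion that ``lifting termwise via the Teichm\"uller map'' converts the partial fraction decomposition of the components of a Witt vector into a decomposition of the Witt vector itself. Since $[a+b]\neq[a]+[b]$, this is not a termwise operation: one must argue by successive approximation, using that $[a]+[b]\equiv[a+b]$ modulo $VW(K)$ and that $V\xi\equiv p\xi\pmod{\wp W(K)}$, peeling off one Witt component at a time and absorbing the error into $V(\cdot)$ before iterating. You do flag this as the ``main technical obstacle'' at the end, but it is precisely where the proof lives, so as written the argument is an outline rather than a proof. Two smaller points: the reduction of the polynomial part to the constant $c[\alpha]$ when $\infty\notin S$ (versus absorbing it into the $\pi_\infty^{-i}$ terms when $\infty\in S$) deserves a sentence; and for the geometric criterion you should note explicitly that because $\Z_p$ has a unique closed subgroup of each index, any nontrivial constant subextension of the tower would already force $K_1/K$ to be a constant field extension, so ramification at level one (produced by some $c_{Q,i}$ of valuation $0$) suffices to make the entire tower geometric.
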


Again see \cite{kw18}, especially Proposition 4.9.   

\begin{remark}
The first level of these extensions (given by adjoining $y_1$, or equivalently working in the truncated Witt-vectors $W_1(K)$ and with the right side modulo $p$)  are Artin-Schreier extensions.  For example, \eqref{eq:localasw} becomes $$y_1^p - y_1 = c \alpha + \sum_{\substack { \gcd(i,p)=1 \\ v(c_i) =0 }} c_i t^{-i}.$$
Similarly, the unique $\Z/p^n\Z$-extension of $L=K(\{y_i\})$ can be described
using the truncated Witt vectors $W_n(K)$.  Recall that arithmetic with Witt vectors is not done component-wise, and is highly non-trivial.  In particular, while $[c x^i] = (c x^i,0,0,\ldots)$, the sum $[c_i x^i] + [c_j x^j]$ is not $(c_i x^i + c_j x^j,0,0,\ldots)$.
\end{remark}

\subsection{Ramification and Conductors in Towers}

\begin{notation} \label{not:ramcond}
Let $\cT$ be a $\Z_p$-tower of curves over $k$ and let $Q\in S$.  

\begin{enumerate}
    \item Let $d_{Q}(\cT(n))$ be the unique break in the lower ramification filtration of the cover $\cT(n) \to \cT(n-1)$ at the point above $Q$ (the \emph{ramification invariant} above $Q$).
    
\item  Let $s_Q(\cT(n))$ be the largest break in the upper ramification filtration for the cover $\cT(n) \to \cT(0)$ above $Q$.   

\item  When $k$ is finite\footnote{A finite residue field is necessary to define the conductor using class field theory.}, let $u_Q(\cT(n))$ be the exponent of the conductor for the extension of local fields coming from $\cT(n)_Q \to \cT(0)_Q$.  
\end{enumerate}
\end{notation}

Recall that the upper numbering is compatible with quotients, 
so we can give $\cT$ an upper ramification filtration making $s_Q(\cT(n))$ the $n$th (upper) break above $Q$.  The lower numbering is compatible with subgroups, and hence the largest break in the lower ramification filtration of $\cT(n) \to \cT(0)$ above $Q$ is $d_Q(\cT(n))$. 

\begin{lem} \label{lem:breaks}
Let $\cT$ be a $\Z_p$-tower of curves over $k$ and $Q\in S$. For each positive integer $n$:
\begin{enumerate}
\item \label{break1} $\displaystyle d_Q(\cT(n)) = p^{n-1} s_Q(\cT(n)) - \sum_{j=1}^{n-1} \varphi(p^j) s_Q(\cT(j))$;
\item \label{break3} $d_Q(\cT(n+1)) - d_Q(\cT(n)) = \left(s_Q(\cT(n+1)) - s_Q(\cT(n)) \right) p^n $; 
\item \label{break2} if $k$ is finite, $u_Q(\cT(n)) = s_Q(\cT(n))+1$.
\end{enumerate}
\end{lem}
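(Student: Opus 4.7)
The plan is to translate every statement into a computation with the Herbrand functions for the local extension $\cT(n)_Q/\cT(0)_Q$ at (the chosen point above) $Q$. Set $G := \Gal(\cT(n)_Q/\cT(0)_Q) \cong \Z/p^n\Z$, let $H_i \subseteq G$ denote the unique subgroup of order $p^i$, and write $b_j := s_Q(\cT(j))$ for $j = 1, \dots, n$, with $b_0 := 0$. Compatibility of the upper numbering with quotients identifies $b_1 < b_2 < \cdots < b_n$ with the upper breaks of $G$, and Hasse--Arf guarantees that each $b_j \in \Z$. In particular $G^w = H_{n-i}$ for $b_i < w \le b_{i+1}$.

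For (1), I would first identify $d_Q(\cT(n))$ with $\psi_{\cT(n)/\cT(0)}(b_n)$. Indeed, the lower numbering is compatible with subgroups, so the unique lower break of the $\Z/p\Z$-cover $\cT(n)\to\cT(n-1)$ is the largest $u$ with $H_1 \subseteq G_u$; using $G_u = G^{\varphi(u)}$ and the fact that the upper filtration drops from $H_1$ to the trivial subgroup exactly at $b_n$, this largest $u$ is $\psi(b_n)$. Then I would use $\psi'(v) = [G : G^v]$ to compute
\[
\psi(b_n) = \sum_{i=0}^{n-1} p^i(b_{i+1}-b_i) = p^{n-1} b_n - \sum_{j=1}^{n-1}(p^j - p^{j-1}) b_j,
\]
and since $\varphi(p^j) = p^j - p^{j-1}$, substituting $b_j = s_Q(\cT(j))$ yields the claimed identity.

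For (2), I would subtract the formula in (1) at levels $n+1$ and $n$. All terms with $j \le n-1$ cancel, and the surviving contribution simplifies as
\[
p^n s_Q(\cT(n+1)) - \bigl(p^{n-1} + \varphi(p^n)\bigr) s_Q(\cT(n)) = p^n\bigl(s_Q(\cT(n+1)) - s_Q(\cT(n))\bigr),
\]
using $p^{n-1} + \varphi(p^n) = p^{n-1} + p^n - p^{n-1} = p^n$.

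For (3), I would appeal to the upper-numbering description of the conductor of an abelian local extension: the conductor exponent is the smallest integer $f \ge 0$ with $G^f = \{e\}$. Since Hasse--Arf makes $b_n$ an integer with $G^{b_n} \ne \{e\}$ and $G^u = \{e\}$ for all $u > b_n$, this smallest integer is $b_n + 1 = s_Q(\cT(n)) + 1$.

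The calculations are all elementary; the only real subtlety is organizing the interchange between upper and lower numbering cleanly, and checking that the inputs from Serre's local fields (Herbrand's theorem, Hasse--Arf, and the upper-numbering characterization of the conductor) apply in this local setting. These invariants depend only on the completions at $Q$, so the finiteness of $k$ in (3) is used only to make sense of the conductor via local class field theory; (1) and (2) hold without this hypothesis.
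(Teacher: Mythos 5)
Your proposal is correct and follows essentially the same route as the paper: the paper cites the explicit description of the upper and lower breaks of a $\Z/p^n\Z$-extension from Serre's \emph{Local Fields} (IV.3, Example), which is precisely the Herbrand-function computation you carry out by hand, and it likewise derives (2) formally from (1) and obtains (3) from the standard conductor–upper-break relation (Serre, XV.2). The only difference is presentational — you integrate $\psi'$ directly rather than quoting the $i_0, i_0+i_1,\dots$ versus $i_0, i_0+pi_1,\dots$ form of the breaks — and your closing remark about where finiteness of $k$ is used matches the paper's footnote.
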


\begin{proof}
This result is standard, although we do not know a good reference for this exact statement.
The relationship between the breaks in the upper and lower ramification filtrations in a $\Z/p^n\Z$-extension of local fields is spelled out in \cite[IV.3 Example]{serrelocal}.   There exist positive integers $i_0, i_1, \ldots, i_{n-1}$ such that the breaks in the upper numbering filtration are $i_0, i_0 + i_1, \ldots , i_0 + i_1 + \ldots+ i_n$ and the breaks in the lower numbering filtration are $i_0, i_0 + p i_1, \ldots, i_0 + p i_1 + \ldots p^{n-1} i_{n-1}$.  In particular, $s_Q(\cT(j)) = i_0 + i_1 + \ldots + i_{j-1}$ and $d_Q(\cT(n)) = i_0 + p i_1 + \ldots p^{n-1} i_{n-1}$, and (\ref{break1}) follows. Statement (\ref{break3}) is a formal consequence of the previous part.  When $k$ is finite, the relationship between the conductor and the upper ramification breaks in (\ref{break2}) follows from \cite[\S XV.2 Corollary 2 to Theorem 1]{serrelocal}.
\end{proof}

\begin{lemma} \label{lem:genuslower}
Let $\cT$ be a $\Z_p$-tower totally ramified above a finite set $S$ of $\kbar$-points of $\cT(0)$.  Then
\begin{align*}
2 g(\cT(n)) -2  &= p^n ( 2 g(\cT(0))-2) + \sum_{Q \in S} \sum_{i=1}^n \varphi(p^{n+1-i}) (d_Q(\cT(i)) +1 ) \\
&=  p^n ( 2 g(\cT(0))-2)) + \sum_{Q \in S} \sum_{i=1}^n \varphi(p^i) (s_Q(\cT(i))+1) .
\end{align*}
\end{lemma}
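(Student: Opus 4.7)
The plan is to iterate Riemann--Hurwitz over the individual $\Z/p\Z$-layers of the tower and then translate from lower to upper ramification via Lemma~\ref{lem:breaks}.

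First, I would isolate a single step $\cT(i) \to \cT(i-1)$: this is a $\Z/p\Z$-cover, totally ramified over the unique point of $\cT(i-1)$ above each $Q \in S$ and \'etale elsewhere. By the definition of $d_Q(\cT(i))$, the unique break in the lower ramification filtration at such a point is $d_Q(\cT(i))$, so the inertia groups satisfy $|G_j| = p$ for $0 \le j \le d_Q(\cT(i))$ and $|G_j| = 1$ otherwise. Hence the local contribution to the different is $\sum_{j \ge 0}(|G_j| - 1) = (p-1)(d_Q(\cT(i)) + 1)$, and Riemann--Hurwitz gives
\[
2g(\cT(i)) - 2 \;=\; p\bigl(2g(\cT(i-1)) - 2\bigr) + (p-1) \sum_{Q \in S}(d_Q(\cT(i)) + 1).
\]
Telescoping this relation by induction on $n$ (multiplying the $i$-th step by $p^{n-i}$ and summing) and using the identity $p^{n-i}(p-1) = \varphi(p^{n+1-i})$ yields the first displayed equality.

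For the second equality, it suffices to verify, for each fixed $Q \in S$, the combinatorial identity
\[
\sum_{i=1}^n \varphi(p^{n+1-i})(d_Q(\cT(i)) + 1) \;=\; \sum_{i=1}^n \varphi(p^i)(s_Q(\cT(i)) + 1).
\]
I would substitute the expression $d_Q(\cT(i)) = p^{i-1} s_Q(\cT(i)) - \sum_{j=1}^{i-1} \varphi(p^j) s_Q(\cT(j))$ from Lemma~\ref{lem:breaks}(\ref{break1}) into the left-hand side, swap the order of summation on the resulting double sum, and apply the telescope $\sum_{i=1}^n \varphi(p^{n+1-i}) = p^n - 1$ to dispose of the constant ``$+1$'' contributions. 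I don't anticipate a real obstacle: the argument is formal once one accepts the single-step Riemann--Hurwitz computation and the lower/upper numbering relation, and the only care required is accurate index bookkeeping in the rearrangement of sums.
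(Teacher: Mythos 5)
Your proposal is correct and takes essentially the same route as the paper, whose proof is simply the one-line instruction ``Apply the Riemann--Hurwitz formula''; you have just carried out the layer-by-layer telescoping and the lower-to-upper conversion via Lemma~\ref{lem:breaks} explicitly, and both the different computation $(p-1)(d_Q(\cT(i))+1)$ at each totally ramified point and the index rearrangement check out.
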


\begin{proof}
Apply the Riemann-Hurwitz formula. 
\end{proof}

\begin{remark} \label{remark:lowergrowth}
As remarked in the introduction, for $\Z_p$-towers in characteristic $p$ there is always ``a lot'' of ramification.  In particular, if $\cT$ is totally ramified above $Q$ then $s_Q(\cT(n)) \geq p s_Q(\cT(n-1))$.
Using Lemma~\ref{lem:breaks}(\ref{break1}) to convert to the lower ramification filtration, it follows that $d_Q(\cT(n)) \geq (p^2-p+1) d_Q(\cT(n-1))$.  Using Lemma~\ref{lem:genuslower}, for any ramified $\Z_p$-tower there is a constant $c>0$ such that $g(\cT(n)) \geq c p^{2n}$.
\end{remark}

\subsection{Types of Towers}

We next identify several nice kinds of $\Z_p$-towers which we will focus on.
\begin{definition} \label{defn:types}
Let $\cT$ be a $\Z_p$-tower of curves over $k$ with branch locus $S$.
\begin{enumerate}

    \item  We say $\cT$ is \emph{monodromy stable}, or has stable monodromy, if for every $Q \in S$ there exists $c_Q, d_Q \in \Q$ such that for $n \gg 0$ 
    \[
    s_Q(\cT(n)) = c_Q + d_Q p^{n-1}.
    \]
    
    \item  We say that $\cT$ has \emph{periodically stable monodromy}, or is periodically monodromy stable, if for every $Q \in S$ there exists an integer $m_Q$, a $d_Q \in \Q$, and a function $c_Q : \Z/m_Q \Z \to \Q$ such that for $n \gg 0$ 
    \[
        s_Q(\cT(n)) = c_Q(n) + d_Q p^{n-1}.
    \]
 \end{enumerate}
\end{definition}

As the Riemann-Hurwitz formula determines the genus of a cover in terms of the genus of the base curve and the ramification, the genus of a monodromy stable (resp. periodically monodromy stable) $\Z_p$-tower is of the form $a p^{2n} + b p^n + c$ for $n \gg 0$ (is of the form $a(n) p^{2n} + b(n) p^n + c(n)$ where $a,b,c$ are eventually periodic functions).  This behavior is referred to as being \emph{genus stable} (resp. \emph{periodically genus stable}).  For later use, we record:

\begin{lem} \label{lem:dsinstable}
If $\cT$ is monodromy stable and $Q\in S$ with $s_Q(\cT(n)) = c_Q + d_Q p^{n-1}$ for $n \gg 0$, then there exists $c'_Q \in \Q$ such that
\[
d_Q(\cT(n)) = d_Q \frac{p^{2n-1}}{p+1} + c'_Q \text{ for } n \gg 0.
\]
Furthermore, $g(\cT(n))$ is asymptotically $ \displaystyle \left( \sum_{Q \in S} d_Q \right) \frac{ p^{2n}}{2 (p+1)} $.
\end{lem}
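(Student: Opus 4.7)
The plan is to deduce both assertions from the explicit ramification formulas provided by Lemma~\ref{lem:breaks} and Lemma~\ref{lem:genuslower}, together with the geometric sum arising from $\varphi(p^j) = (p-1)p^{j-1}$. The only real work is bookkeeping of the ``transient'' terms where the hypothesized formula $s_Q(\cT(j)) = c_Q + d_Q p^{j-1}$ may not yet hold.

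For the first claim, I would start from Lemma~\ref{lem:breaks}\eqref{break1}:
\[
d_Q(\cT(n)) = p^{n-1} s_Q(\cT(n)) - \sum_{j=1}^{n-1} \varphi(p^j)\,s_Q(\cT(j)).
\]
Fix $N_0$ such that $s_Q(\cT(j)) = c_Q + d_Q p^{j-1}$ for all $j \geq N_0$; the sum splits as a constant (the truncated piece with $j < N_0$) plus the tail $\sum_{j=N_0}^{n-1} \varphi(p^j)(c_Q + d_Q p^{j-1})$. Using $\varphi(p^j) = (p-1)p^{j-1}$, the tail evaluates to
\[
c_Q\bigl(p^{n-1} - p^{N_0 - 1}\bigr) + d_Q \cdot \frac{p^{2n-2} - p^{2N_0 - 2}}{p+1}.
\]
Plugging $s_Q(\cT(n)) = c_Q + d_Q p^{n-1}$ into the leading $p^{n-1} s_Q(\cT(n))$ term produces $c_Q p^{n-1} + d_Q p^{2n-2}$. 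The two $c_Q p^{n-1}$ contributions cancel, and the remaining $p^{2n-2}$ coefficients combine as $d_Q\bigl(1 - \tfrac{1}{p+1}\bigr) = \tfrac{d_Q \cdot p}{p+1}$, yielding $d_Q(\cT(n)) = d_Q \cdot \tfrac{p^{2n-1}}{p+1} + c'_Q$ for a constant $c'_Q$ depending only on the finitely many ``unstable'' data, as required.

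For the asymptotic genus claim, apply Lemma~\ref{lem:genuslower} in its upper-ramification form:
\[
2g(\cT(n)) - 2 = p^n\bigl(2g(\cT(0)) - 2\bigr) + \sum_{Q \in S} \sum_{i=1}^n \varphi(p^i)\bigl(s_Q(\cT(i)) + 1\bigr).
\]
The linear-in-$p^n$ contributions coming from $p^n(2g(\cT(0))-2)$, from $\varphi(p^i) \cdot 1$, and from the $c_Q$ pieces of $s_Q(\cT(i))$ are all $O(p^n)$ and negligible. The dominant term comes from the $d_Q p^{i-1}$ pieces, and the computation
\[
\sum_{i=1}^{n} \varphi(p^i)\, p^{i-1} = (p-1)\sum_{i=1}^n p^{2i-2} = \frac{p^{2n} - 1}{p+1}
\]
shows that $\sum_{Q} \sum_i \varphi(p^i) s_Q(\cT(i))$ is asymptotic to $\bigl(\sum_{Q\in S} d_Q\bigr)\tfrac{p^{2n}}{p+1}$. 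Dividing by $2$ gives the claimed asymptotic for $g(\cT(n))$.

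There is no real obstacle here beyond careful accounting: the finitely many early levels at which $s_Q(\cT(j))$ may deviate from the eventual formula contribute only to the constant $c'_Q$ (and to lower-order terms in the genus expression), and the geometric series $(p-1)\sum p^{2i-2}$ naturally produces the denominator $p+1$ appearing in both conclusions.
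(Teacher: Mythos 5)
Your proof is correct and follows essentially the same route as the paper: the paper deduces the first claim from Lemma~\ref{lem:breaks}(\ref{break3}) (the difference formula $d_Q(\cT(n+1))-d_Q(\cT(n)) = (s_Q(\cT(n+1))-s_Q(\cT(n)))p^n$, which is a formal consequence of the summation formula (\ref{break1}) you use) and the second from Lemma~\ref{lem:genuslower}, with the same geometric sums producing the $p+1$ denominators. Your bookkeeping of the finitely many unstable levels into the constant $c'_Q$ is accurate.
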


\begin{proof}
For the first, use the definition of monodromy stability plus Lemma~\ref{lem:basicinvariants}(\ref{break3}).  Then the second statement follows using Lemma~\ref{lem:genuslower}.
\end{proof}

\begin{remark}
Monodromy stable towers are a very natural class of towers to consider as all $\Z_p$-towers of ``geometric origin'' are monodromy stable \cite{kmmonodromy}.
\end{remark}

Many of our computations will deal with a particularly simple class of $\Z_p$-towers over $\PP^1_k$ where $k$ is finite which we refer to as {\em basic} $\Z_p$-towers.
Fix a coordinate $x$ on the projective line $\PP^1_k$.

\begin{definition} \label{defn:basictower}
Let $d$ be a positive integer that is prime to $p$ and let $k$ be a finite field of characteristic $p$.  A \emph{basic $\Z_p$-tower $\cT$ with ramification invariant $d$} is the $\Z_p$-tower over $\PP^1_k$ given by the Artin-Schreier-Witt equation
\[
Fy - y = \sum_{\substack{i=1 \\ (i,p)=1}}^d [c_i x^i]
\]
with $c_i \in k$ and $c_d \neq 0$.  (It is convenient to then define $c_i =0$ when $p | i$.)
\end{definition}

These are also called \emph{unit root $\Z_p$-extensions} \cite[Example 4.10]{kw18}.  By Fact~\ref{fact:p1asw}, the function field of $\cT(n)$ is the $\Z/ p^n \Z$-extension of $k(x)$ given by adjoining $y_1,y_2,\ldots, y_n$ where $(y_1,y_2,\ldots,y_n) \in W_n(k(x))$ is a solution of the Witt vector equation
\begin{equation}
F(y_1,y_2,\ldots,y_n) - (y_1,y_2,\ldots,y_n) = \sum_{i=1}^d  (c_i x^i,0,\ldots, 0).
\end{equation}
In particular, $\cT(1)$ is the Artin-Schreier curve given by $\displaystyle y_1^p - y_1 = \sum_{i=1}^d c_i x^i$.

\begin{lemma} \label{lem:basicinvariants}
A basic $\Z_p$-tower $\cT$ with ramification invariant $d$ is totally ramified over $\infty$ and unramified elsewhere.  Recalling Notation~\ref{not:ramcond}, we have that 
\[
u_\infty(\cT(n)) -1 = s_\infty(\cT(n))  = d p^{n-1} \quad \text{and that} \quad d_\infty(\cT(n)) = d  \cdot \frac{p^{2n-1} +1}{p+1}.
\]
In particular, basic $\Z_p$-towers are monodromy stable (recall Definition~\ref{defn:types}) and the genus satisfies
\[
2 g(\cT(n)) -2 = \frac{d}{p+1} p^{2n} - p^n  - \frac{p+1+d}{p+1}.
\]
\end{lemma}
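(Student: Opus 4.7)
The plan is to work through the claims in order, extracting each from the Artin--Schreier--Witt machinery already recorded in Fact~\ref{fact:localasw} and Lemma~\ref{lem:breaks}.

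First I would handle ramification. At a finite point $x=a$ the right-hand side $\sum_i [c_i x^i]$ of the defining Witt equation is regular, so Fact~\ref{fact:p1asw} (with all local coefficients in $W(k)$ zero at that place) shows the tower is \'{e}tale there. At $\infty$ we change to the uniformizer $t = 1/x$ and rewrite $[c_i x^i] = [c_i\, t^{-i}]$. Since $c_i \in k$, the Teichm\"{u}ller lifts $[c_i]$ have $v([c_i])=0$ whenever $c_i\neq 0$, and $c_d\neq 0$ with $\gcd(d,p)=1$. Plugging into the conductor formula of Fact~\ref{fact:localasw} gives
\[
u_\infty(\cT(n)) = 1 + \max\{i\,p^{n-1-v([c_i])} : \gcd(i,p)=1,\ v([c_i])<n\} = 1 + d\,p^{n-1},
\]
the maximum being realized by $i = d$. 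In particular the extension is ramified at $\infty$ with positive conductor for every $n$, hence totally ramified (the decomposition group is all of $\Z/p^n\Z$ since the inertia is nontrivial and the quotient corresponds to an unramified cover of the complete local ring, which must then be trivial). Lemma~\ref{lem:breaks}(\ref{break2}) then yields $s_\infty(\cT(n)) = u_\infty(\cT(n)) - 1 = d\,p^{n-1}$, and monodromy stability is immediate with $c_\infty = 0$, $d_\infty = d$ in Definition~\ref{defn:types}.

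Next I would compute $d_\infty(\cT(n))$ by plugging the formula for $s_\infty$ into Lemma~\ref{lem:breaks}(\ref{break1}):
\[
d_\infty(\cT(n)) = p^{n-1}\cdot d\,p^{n-1} - \sum_{j=1}^{n-1} (p^j - p^{j-1}) \cdot d\,p^{j-1}
 = d\,p^{2n-2} - d(p-1)\sum_{j=1}^{n-1} p^{2j-2}.
\]
Summing the geometric series $\sum_{j=1}^{n-1} p^{2j-2} = (p^{2n-2}-1)/(p^2-1)$ and simplifying collapses the right-hand side to $d(p^{2n-1}+1)/(p+1)$, as claimed.

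Finally, for the genus, I would apply Lemma~\ref{lem:genuslower} with $g(\cT(0)) = 0$ and a single ramified point. Writing the second form of the formula,
\[
2g(\cT(n)) - 2 = -2p^n + \sum_{i=1}^n (p^i - p^{i-1})\bigl(d\,p^{i-1} + 1\bigr),
\]
and splitting the sum into the telescoping piece $\sum (p^i - p^{i-1}) = p^n - 1$ and the geometric piece $d(p-1)\sum p^{2i-2} = d(p^{2n}-1)/(p+1)$, the arithmetic reduces to $\frac{d}{p+1}p^{2n} - p^n - \frac{p+1+d}{p+1}$.

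None of the steps is really hard; the only place to be careful is the transition from the global Witt-vector data of Definition~\ref{defn:basictower} to the local one at $\infty$ in the form required by Fact~\ref{fact:localasw}, so as to justify that each $[c_i x^i]$ contributes a term $[c_i]\,[t^{-i}]$ with Teichm\"{u}ller valuation $0$. Once that is established, the rest is bookkeeping with the formulas of Lemma~\ref{lem:breaks} and Lemma~\ref{lem:genuslower}.
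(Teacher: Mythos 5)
Your proposal is correct and follows essentially the same route as the paper, which simply invokes Fact~\ref{fact:localasw} for the conductor and then Lemma~\ref{lem:breaks} and Lemma~\ref{lem:genuslower} for $s_\infty$, $d_\infty$, and the genus; you have just carried out the arithmetic explicitly. The only slightly loose spot is the total-ramification claim: the cleanest justification is that the conductor $u_\infty(\cT(1))=d+1>1$ forces the inertia subgroup of $\Z/p^n\Z$ to surject onto $\Gal(\cT(1)/\cT(0))\simeq\Z/p\Z$, and the only such subgroup is the whole group.
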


\begin{proof}
We see that $u_\infty(\cT(n)) = 1 + d p^{n-1}$ using Fact~\ref{fact:localasw}.  By Lemma~\ref{lem:basicinvariants}, we obtain the formulas for $s_\infty(\cT(n))$, $d_\infty(\cT(n))$, and $g(\cT(n))$.  See also \cite[Example 4.10]{kw18}
\end{proof}

\begin{remark}\label{basicnotation}
When working with a basic $\Z_p$-tower $\cT$, each layer $\cT(n)\rightarrow \cT(0)=\PP^1_k$ 
is totally ramified over the point at infinity, and unramified elsewhere.
We will therefore often write $u(\cT(n))$ instead of $u_\infty(\cT(n))$ (and similarly for $s(\cT(n))$ and $d(\cT(n))$).  
\end{remark}

Another nice example of a monodromy stable tower is the Igusa tower.  

\begin{example} \label{ex:igusaproperties}
We work over $k=\overline{\F}_p$, and let $\Ig(n)$ denote the curve representing the moduli problem of elliptic curves over $k$ with an Igusa level structure of level $p^n$ and a $\moduli$-level structure for a suitable auxiliary moduli problem $\moduli$ (see \cite[Chapter 12]{km85}). 

For example, when $p \neq 5$ we could choose to use $\moduli = \Gamma_1(5)$, which satisfies the hypotheses of \cite[Theorem 12.9.1]{km85}, as the auxiliary moduli problem.  (It is standard to compute that the moduli problem $\Gamma_1(5)$ has degree $24$ and has $4$ cusps.)   Note that $\Ig(n)$ is a smooth proper curve over $k$, and it is connected (it suffices to check this for $X_1(5)$ over $\C$).  Then $\Ig(1)$ is a $(\Z/p\Z)^\times / \{\pm 1\}$-cover of $X_1(5)_k \simeq \PP^1_k$ totally ramified over the supersingular points and \cite[Corollary 12.9.4]{km85} gives a $\Z_p$-tower
\[
\Ig : \cdots \to \Ig(3) \to \Ig(2) \to  \Ig(1) ,
\]
totally ramified over the $p-1$ points $S$ of $\Ig(1)$ which lie over the supersingular points of $X_1(5)_k$, and unramified elsewhere.  
We know $d_Q(\Ig(n)) = p^{2(n-1)}-1$ for each $Q \in S$ by \cite[Lemma 12.9.3]{km85}, which implies $g(\Ig(n)) = p^{2n-1} (p-1)/2 - 2 p^{n-1}(p-1) + 1$ as in \cite[Corollary 12.9.4]{km85}.  
\end{example}

\section{Conjectures for Monodromy Stable Towers} \label{sec:conjectures}

For a $\Z_p$-tower of curves $\cT$ over a perfect field of characteristic $p$, Philosophy~\ref{philosophy}
predicts that the invariants of $J_{\cT(n)}[p]$ should be ``regular'' for $n \gg 0$.  This regularity should furthermore depend only the local information given by the ramification filtration at each ramified point.

\begin{philosophy} \label{philosophy:sum}
For a $\Z_p$-tower of curves $\cT$ over a perfect field of characteristic $p$ ramified over $S$, invariants of $J_{\cT(n)}[p]$ should be a sum of ``local contributions'' depending only on the ramification of $\cT$ at each branch point $Q\in S$.
\end{philosophy}

\begin{remark}
 The genus and $p$-rank of a monodromy stable tower illustrate this philosophy as they include a contribution from each point of ramification.  See for example the asymptotic for the genus in Lemma~\ref{lem:dsinstable} and equation \eqref{eq:DSformula}  
\end{remark}

In this section, we will formulate precise conjectures for monodromy stable $\Z_p$-towers that exemplify these philosophies.  We make these conjectures only for $\dk{r}{\cT(n)}$ with $r \geq 1$, which are a partial list of invariants for $J_{\cT(n)}[p]$.  We restrict ourselves in this manner as:
\begin{itemize}
    \item the ramification is simple in monodromy stable towers, so it is much easier to see how $\dk{r}{\cT(n)}$ is ``regular'' for $n \gg 0$;
    
    \item  it is feasible to compute with them: $\dk{r}{\cT(n)}$ can be computed using the action of the Cartier operator on the space of regular differentials, and in monodromy stable towers the dimension of this vector space (the genus) is ``only'' asymptotic to $c p^{2n}$ with $c>0$.   Other $\Z_p$-towers with more complicated ramification will usually have even faster genus growth. 
\end{itemize}

We begin by considering the asymptotic growth of $\dk{r}{\cT(n)}$ in monodromy stable $\Z_p$-towers. 

\begin{notation} \label{not:alpha}
For a prime $p$ and positive integers $d$ and $r$, define 
\begin{equation}
    \alpha(r,p) \colonequals \frac{r (p-1) }{2(p+1)((p-1)r + (p+1)) }.
\end{equation}
We will also use the shorthand $\alpha(p) \colonequals \alpha(1,p)$.
\end{notation}

\begin{conjecture} \label{conj:stableasymptotic}
Let $\cT$ be a monodromy stable $\Z_p$-tower totally ramified over $S$ and unramified elsewhere.  For $Q \in S$, let $c_Q, d_Q \in \Q$ with $s_Q(\cT(n)) = d_Q p^{n-1} + c_Q$ for $n \gg 0$ and set $\displaystyle D \colonequals \sum_{Q \in S} d_Q$.  Then $\dk{r}{\cT(n)}$ is asymptotically $\alpha(r,p) D p^{2n}$ for large $n$; in other words
\[
\lim_{n \to \infty} \frac{\dk{r}{\cT(n)}}{\alpha(r,p) D p^{2n}} =1.
\]
\end{conjecture}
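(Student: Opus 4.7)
The plan is to prove the asymptotic by reducing to a local analysis at each ramified point $Q\in S$, then to sum the local contributions via an induction along the tower. A first sanity check: by Lemma~\ref{lem:dsinstable}, $g(\cT(n))\sim \frac{D}{2(p+1)}p^{2n}$, and the Deuring--Shafarevich formula \eqref{eq:DSformula} gives $\dim_k M_n^{V\bij}=O(p^n)$, so $\dim_k M_n^{V\nilp}\sim \frac{D}{2(p+1)}p^{2n}$. Since $V^r=0$ on $M_n^{V\nilp}$ for $r$ sufficiently large, the asymptotic must satisfy $\alpha(r,p)\to \frac{1}{2(p+1)}$ as $r\to\infty$, which one verifies from the closed form in Notation~\ref{not:alpha}. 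The target is thus the precise ratio
\[
\frac{\dk{r}{\cT(n)}}{g(\cT(n))}\longrightarrow \frac{r(p-1)}{(p-1)r+(p+1)}.
\]

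The first step is to set up an induction on $n$ using the trace map. Each level $\cT(n)\to\cT(n-1)$ is an Artin--Schreier cover, and the trace $\mathrm{Tr}:M_n\to M_{n-1}$ commutes with the Cartier operator $V$, fitting into a short exact sequence of $k[V]$-modules
\[
0\longrightarrow \ker(\mathrm{Tr})\longrightarrow M_n\longrightarrow \mathrm{im}(\mathrm{Tr})\longrightarrow 0.
\]
Under monodromy stability, $\dim_k\ker(\mathrm{Tr})\sim \frac{p-1}{p}\cdot g(\cT(n))$ by Riemann--Hurwitz, and one expects $\mathrm{im}(\mathrm{Tr})$ to agree with $M_{n-1}$ up to terms of order $p^n$. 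The goal is therefore to relate $\dk{r}{M_n}$ to $\dk{r}{M_{n-1}}$ via a correction term supported on $\ker(\mathrm{Tr})$ that asymptotically contributes $\alpha(r,p)(p^2-1)Dp^{2(n-1)}$; this telescoping recursion solves to $\dk{r}{\cT(n)}\sim \alpha(r,p)Dp^{2n}$.

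The second step, the local computation, is where the real work lives. At each $Q\in S$ with $s_Q(\cT(n))=d_Qp^{n-1}+c_Q$, one must describe $\ker(\mathrm{Tr})$ concretely via differentials with controlled order of pole along the ramification locus, and then analyze the Cartier operator on this local model. This is the natural generalization to arbitrary $p$ and $r$ of the trace/Cartier analysis carried out for $p=2$ and $r=1$ in Section~\ref{sec:theoretical}. One would hope that the leading-order contribution at $Q$ depends only on $d_Q$ (and not on the lower-order constant $c_Q$), which would explain both the appearance of $D=\sum_Q d_Q$ in the asymptotic and the ``local contribution'' philosophy of Philosophy~\ref{philosophy:sum}.

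The main obstacle is the second step for $r>1$ and $p>2$. Already in the $p=2,r=1$ case, the local analysis needed is delicate; the authors remark (Section~\ref{ss:otherproof}) that their approach gives only very limited information about higher powers of $V$ even in characteristic two. The difficulty is that $\ker(V^r)\cap \ker(\mathrm{Tr})$ for $r\ge 2$ involves iterating $V$ across the trace filtration, and there is no obvious combinatorial or representation-theoretic description of how the Cartier operator propagates poles on the Artin--Schreier cover. A complete proof would likely require a new technique, perhaps a Dieudonn\'e-theoretic limit argument compatible with the Iwasawa-algebraic framework of \cite{CaisIwasawa}, applied to the local-local component $\Gscr_{X_n}^{\ll}$ suitably enlarged to track $V^r$-kernels across all levels simultaneously.
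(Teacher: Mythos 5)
The statement you are addressing is Conjecture~\ref{conj:stableasymptotic}; the paper offers no proof of it, only computational evidence (Sections~4--6) and a proof of the single case $p=2$, $r=1$ under a technical ramification hypothesis (Corollary~\ref{cor:anumberstable}). Your proposal is likewise not a proof: the entire content of the asymptotic is concentrated in your ``second step,'' which you yourself flag as open. Concretely, the assertion that the correction term supported on $\ker(\mathrm{Tr})$ contributes $\alpha(r,p)(p^2-1)Dp^{2(n-1)}$ at level $n$ is, after telescoping, exactly equivalent to the conjecture; nothing in your first step reduces its difficulty. The route you sketch --- the trace map commuting with $V$, a local analysis of pole orders at each $Q\in S$, and the hope that the leading term depends only on $d_Q$ --- is precisely the strategy of Section~\ref{sec:theoretical} (Theorem~\ref{thm:tracevanishing}, Corollary~\ref{cor:traceeq0}, Proposition~\ref{prop:anumberbase}), and the paper is explicit in Section~\ref{ss:otherproof} that this strategy already stalls for $r\ge 2$ even in characteristic two: Example~\ref{ex:p2d21} exhibits basic $\Z_2$-towers with identical ramification but $\dk{2}{\cT(3)}\neq\dk{2}{\cT'(3)}$, so controlling $\ker(V^r)\cap\ker(\mathrm{Tr})$ requires more than level-by-level ramification data, and no argument of this shape is currently known to yield even the asymptotic for $r\ge 2$ or $p>2$.

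Two smaller points. First, your claim that $\dim_k\ker(\mathrm{Tr})\sim\frac{p-1}{p}\,g(\cT(n))$ is off: for the degree-$p$ cover $\cT(n)\to\cT(n-1)$ the trace on regular differentials has kernel of dimension $g(\cT(n))-g(\cT(n-1))\sim(1-p^{-2})\,g(\cT(n))$, and when $r=1$ the relevant subspace is smaller still by Theorem~\ref{thm:tracevanishing}. Second, your consistency check that $\alpha(r,p)\to\frac{1}{2(p+1)}$ as $r\to\infty$ is correct and matches Corollary~\ref{cor:ratio}, but it constrains only the limit of the constants; the specific value $\alpha(r,p)=\frac{r(p-1)}{2(p+1)((p-1)r+(p+1))}$ for finite $r$ is an empirical discovery of the paper, and your outline contains no mechanism that would produce it. In short, the conjecture remains open, and your sketch reproduces the paper's known partial approach without supplying the missing local input.
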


\begin{cor} \label{cor:ratio}
Conjecture~\ref{conj:stableasymptotic} implies that for a totally ramified monodromy stable $\Z_p$-tower $\cT$
\begin{equation}
\lim_{n \to \infty} \frac{\dk{r}{\cT(n)}}{g(\cT(n))} = \frac{r (p-1)}{(p-1)r + (p+1)} = \frac{1}{1 + \frac{p+1}{(p-1) r}} .
\end{equation}
\end{cor}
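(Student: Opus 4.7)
The plan is to combine the two asymptotic formulas already in hand: the conjectured growth of $\dk{r}{\cT(n)}$ and the known growth of $g(\cT(n))$ in a monodromy stable tower. Specifically, Conjecture~\ref{conj:stableasymptotic} gives
\[
    \dk{r}{\cT(n)} \sim \alpha(r,p)\, D\, p^{2n},
\]
while Lemma~\ref{lem:dsinstable} asserts that
\[
    g(\cT(n)) \sim \frac{D}{2(p+1)}\, p^{2n},
\]
where in both expressions $D = \sum_{Q\in S} d_Q$. Since $D > 0$ (the tower is assumed to be ramified), the $Dp^{2n}$ factors cancel upon forming the ratio, and we obtain
\[
    \lim_{n\to\infty} \frac{\dk{r}{\cT(n)}}{g(\cT(n))} = \frac{\alpha(r,p)\, D}{D/(2(p+1))} = 2(p+1)\,\alpha(r,p).
\]

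The remaining step is purely algebraic. Substituting the definition
\[
    \alpha(r,p) = \frac{r(p-1)}{2(p+1)\bigl((p-1)r + (p+1)\bigr)}
\]
from Notation~\ref{not:alpha} gives
\[
    2(p+1)\,\alpha(r,p) = \frac{r(p-1)}{(p-1)r + (p+1)},
\]
and dividing the numerator and denominator of the right-hand side by $r(p-1)$ rewrites this as $\bigl(1 + \tfrac{p+1}{(p-1)r}\bigr)^{-1}$, matching the second form stated in the corollary.

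There is no real obstacle here: once the two asymptotics are lined up, the cancellation of $Dp^{2n}$ is automatic and the identification of $2(p+1)\alpha(r,p)$ with $r(p-1)/((p-1)r+(p+1))$ is a direct simplification. The only mild point worth flagging is that one needs $D\neq 0$ to divide through by it, but this is guaranteed since each $d_Q$ is a positive rational (it governs the genus growth via the Riemann--Hurwitz contribution, which is strictly positive at any branch point of a wildly ramified $\Z_p$-tower, cf.\ Remark~\ref{remark:lowergrowth}).
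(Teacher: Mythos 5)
Your proposal is correct and follows exactly the paper's argument: divide the asymptotic from Conjecture~\ref{conj:stableasymptotic} by the genus asymptotic from Lemma~\ref{lem:dsinstable} and simplify $2(p+1)\alpha(r,p)$. The added remark about $D>0$ is a harmless (and accurate) bit of extra care.
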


\begin{proof}
For $Q \in S$, as before let $s_Q(\cT(n)) = d_Q p^{n-1} + c_Q$ for $n \gg 0$ with $c_Q, d_Q \in \Q$ and set $\displaystyle D \colonequals \sum_{Q \in S} d_Q$.
From Lemma~\ref{lem:dsinstable}, we know that $g(\cT(n)$ is asymptotic to $D/ (2(p+1)) p^{2n}$.  Then compare with the asymptotic for $\dk{r}{\cT(n)}$ from Conjecture~\ref{conj:stableasymptotic}.  
\end{proof}

For example, in monodromy stable towers we predict that 
\begin{equation} \label{eq:asymptoticanumber}
    \displaystyle \lim_{n \to \infty} \frac{a(\cT(n))}{g(\cT(n)} = \frac{p-1}{2p}.
\end{equation}

\begin{remark}
The limit in Corollary~\ref{cor:ratio} approaches $1$ as $r$ becomes large.  Thus Conjecture~\ref{conj:stableasymptotic} predicts that the Cartier operator is essentially nilpotent on $H^0(\Omega^1_{\cT(n)})$.  This is as expected: the $k[V_{\cT(n)}]$-module
$H^0(\Omega^1_{\cT(n)})$ decomposes as a direct sum of its $V_{\cT(n)}$-nilpotent and $V_{\cT(n)}$-bijective submodules as in \eqref{eq:fitting},
and the Deuring-Shafarevich formula \eqref{eq:DSformula} shows that the $k$-dimension of the $V_{\cT(n)}$-bijective component
 is bounded by a constant times $p^n$, whereas the genus (and hence the $k$-dimension of the $V_{\cT(n)}$-nilpotent component) is on the order of $p^{2n}$.  In other words, the Cartier operator acts nilpotently on essentially all of $H^0(\Omega^1_{\cT(n)})$ as $n\rightarrow \infty$.  
\end{remark}

We also formulate more precise conjectures about the exact values of $a(\cT(n))$ and $\dk{r}{\cT(n)}$ in monodromy stable towers.  We begin with the $a$-number, whose behavior seems simplest.

\begin{conjecture} \label{conj:stableanumber}
For every monodromy stable $\Z_p$-tower of curves $\cT$ over a perfect field of characteristic $p$, 
there exist $a,b,c \in \Q$ such that
\[
a(\cT(n)) = \dk{1}{\cT(n)} = a p^{2n} + b p^n + c \text{ for } n \gg 0.
\]
\end{conjecture}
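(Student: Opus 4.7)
The plan is to proceed by induction on $n$, exploiting the inductive Artin-Schreier structure of the cover $\pi_n \colon \cT(n+1) \to \cT(n)$ together with the assumption of monodromy stability. By hypothesis, for each $Q \in S$ one has $s_Q(\cT(n)) = d_Q p^{n-1} + c_Q$ for $n \gg 0$, so by Lemma~\ref{lem:dsinstable} the ramification of $\pi_n$ (which is a $\Z/p\Z$-Artin-Schreier cover totally ramified above the points above $S$) has conductors that depend linearly on $p^n$ with coefficients that are eventually constant in $n$. First I would use the explicit bases for $H^0(\cT(n), \Omega^1_{\cT(n)/k})$ produced by Madden's theorem (as employed in Section~\ref{sec:computing}) to realize $H^0(\cT(n+1), \Omega^1_{\cT(n+1)/k})$ as a direct sum of trace-image differentials pulled back from $\cT(n)$ and ``new'' differentials arising from poles of controlled order along the ramification locus of $\pi_n$.

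Second, I would analyze the Cartier operator $V$ via the trace map $\pi_{n,*}$, which intertwines $V_{\cT(n+1)}$ with $V_{\cT(n)}$. This gives a short exact sequence
\[
0 \to \ker \pi_{n,*} \to H^0(\cT(n+1), \Omega^1) \xrightarrow{\pi_{n,*}} \pi_{n,*} H^0(\cT(n+1), \Omega^1) \to 0
\]
compatible with $V$, so that $a(\cT(n+1))$ splits as the dimension of $\ker V$ on the trace-image (which is controlled inductively by $a(\cT(n))$ up to an error bounded in terms of the ramification) plus the dimension of $\ker V \cap \ker \pi_{n,*}$. The key tool for the second summand is the general trace-theoretic result proved in Section~\ref{sec:theoretical} (valid in any characteristic), which characterizes the differentials killed by $V$ whose trace vanishes in terms of explicit pole-order and Witt-vector residue conditions at each ramified point. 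Under monodromy stability, these conditions are parametrized by a set of integer solutions to linear inequalities whose coefficients grow linearly in $p^n$, so counting them yields a contribution of the form $a' p^{2n} + b' p^n + c'(n)$ with $c'$ eventually periodic. Telescoping $a(\cT(n+1)) - a(\cT(n))$ then produces a closed-form expression $a p^{2n} + b p^n + c(n)$ for $n \gg 0$; absorbing the periodic piece $c(n)$ into a true constant requires one extra step, namely showing that the periodicity trivializes (this is expected because, unlike higher $\dk{r}{\cT(n)}$, the $a$-number conjecture predicts no periodic part).

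The main obstacle is the third step for primes $p > 2$. The trace result of Section~\ref{sec:theoretical} gives a sharp count on $\ker V \cap \ker \pi_{n,*}$ only when $p = 2$, because then the kernel of the trace map on differentials is one-dimensional over $k(\cT(n))$ and its $V$-image admits a clean description. For $p > 2$, one must separately track $V$ on each of the $p-1$ nontrivial weight spaces under the cyclic Galois action on $\ker \pi_{n,*}$; this involves the modular representation theory of $\Z/p\Z$ on spaces of logarithmic differentials together with a delicate analysis of how the Witt-vector equations defining $\cT(n+1)$ propagate poles up the tower. Producing \emph{exact} counts (rather than the asymptotic bounds of \cite{bc20}) at this stage is precisely what prevents the $p=2$ argument from generalizing, and is where substantial new ideas—perhaps a finer filtration on $H^0(\Omega^1)$ by ``Artin-Schreier depth'' together with a stabilization theorem for the associated graded pieces—would be needed to complete the proof.
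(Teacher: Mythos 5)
You are proposing a proof of Conjecture~\ref{conj:stableanumber}, but this statement is a \emph{conjecture}: the paper does not prove it, and neither do you. The paper establishes only the case $p=2$ (Corollary~\ref{cor:anumberstable}), and even then only under the extra ramification hypothesis $\sum_{Q\in S}(-c_Q) > 2g(\cT(0))-2+\#S$. Your own write-up concedes that the argument does not close for $p>2$ and that ``substantial new ideas'' are needed there, so what you have is a plan with an acknowledged hole at its center, not a proof. For $p>2$ the paper offers only numerical evidence.

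Even restricted to $p=2$, your strategy diverges from the one that actually works, and contains a genuine gap. You propose to compute $a(\cT(n+1))$ from the short exact sequence $0 \to \ker\pi_{n,*} \to H^0(\cT(n+1),\Omega^1) \to \im\pi_{n,*} \to 0$ by adding the dimension of $\ker V$ on the trace image to $\dim(\ker V \cap \ker\pi_{n,*})$. This additivity is false in general: $V$ is only $p^{-1}$-semilinear, and for a short exact sequence of $k[V]$-modules the kernel dimensions are merely subadditive, with equality only when the snake-lemma connecting map vanishes --- which you do not address. The paper sidesteps this entirely: Theorem~\ref{thm:tracevanishing} shows that any regular differential killed by $V$ has trace vanishing to order at least $d_Q - \lceil d_Q/p\rceil$ at each branch point, and Corollary~\ref{cor:traceeq0} upgrades this to $\pi_*\eta = 0$ once the ramification is large relative to $2g(X)-2$. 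Hence the \emph{entire} kernel of $V$ lies in $\ker\pi_*$, the ``trace-image'' contribution is zero, and (for $p=2$, via Lemma~\ref{lem:tracekernel}) the problem reduces with no induction on levels to computing $a(\Omega^1_X(\sum_Q \lceil d_Q/2\rceil [Q]))$ on the base, which is done exactly by Tango-type results from \cite{bc20}. Your remaining step --- extracting a quadratic-in-$p^n$ count of $\ker V\cap\ker\pi_*$ from ``integer solutions to linear inequalities'' and then showing the periodic part trivializes --- is not carried out and overstates what the trace theorem gives (it bounds pole orders of traces; it does not characterize which trace-kernel elements are killed by $V$). That missing count is precisely the content of the conjecture.
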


Note that Conjecture~\ref{conj:stableasymptotic} predicts the value of $a$ in Conjecture~\ref{conj:stableanumber}.

\begin{conjecture} \label{conj:stablespecific}
Fix $r \geq 1$.  For every monodromy stable $\Z_p$-tower of curves $\cT$ over a perfect field of characteristic $p$, 
there exists a positive integer $m$ and functions $a,b,c,\lambda : \Z/m\Z \to \Q$ such that
\[
\dk{r}{\cT(n)} = a(n) p^{2n} + b(n) p^n + c(n) + \lambda(n) n \text{ for } n \gg 0.
\]
\end{conjecture}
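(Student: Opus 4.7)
The plan is to prove Conjecture~\ref{conj:stablespecific} by induction on $n$, extending the trace-based argument that handles Conjecture~\ref{MC} for $p=2$ and $r=1$. The key idea is to analyze how $\ker V^r$ behaves under each Artin--Schreier cover $\cT(n+1) \to \cT(n)$, and to assemble these analyses into a recursion on $\dk{r}{\cT(n)}$.

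First I would set up an explicit model. Using Artin--Schreier--Witt theory (Fact~\ref{fact:p1asw}) together with monodromy stability, each cover $\cT(n+1) \to \cT(n)$ is a $\Z/p\Z$-cover whose upper ramification break above $Q$ is $s_Q(\cT(n+1)) = d_Q p^n + c_Q$ for $n \gg 0$. Applying Madden's theorem as in Section~\ref{sec:computing}, I would obtain a basis of $H^0(\cT(n+1), \Omega^1)$ adapted to the tower, indexed by tuples $(i_1, \ldots, i_n)$ recording pole orders in the successive uniformizers at each ramification point, together with pullbacks of a basis of $H^0(\cT(0), \Omega^1)$. Monodromy stability ensures that the admissible range for each $i_j$ is linear in $p^j$ up to a periodic correction.

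Next I would generalize the trace analysis of Section~\ref{sec:theoretical}. For each Artin--Schreier cover $\cT(n+1) \to \cT(n)$, one has a trace decomposition of $H^0(\cT(n+1), \Omega^1)$ in which the Cartier operator intertwines trace with $V_{\cT(n)}$. The strategy is to write a recursion of the form
\[
\dk{r}{\cT(n+1)} = \dk{r}{\cT(n)} + \Delta_r(n),
\]
where $\Delta_r(n)$ is a ``new contribution'' that can be computed from the local data at each $Q\in S$. Because the conductor is eventually linear in $p^n$, counting basis vectors satisfying the iterated conditions defining $\ker V^r$ reduces to a lattice-point count in a polytope whose vertices depend affinely on $p^n$ and on the periodic correction $c_Q(n)$. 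I would argue that this count is a quasi-polynomial of degree $2$ in $p^n$ of the form $a(n) p^{2n} + b(n) p^n + c(n)$, where the periodicity $m$ is dictated by the denominators appearing in the floor-function inequalities that cut out the kernel. The additional $\lambda(n) n$ term would arise from lattice points on certain codimension-one facets of the polytope, which contribute linearly in $n$ when iterated up the tower. Solving the recursion then yields the claimed formula.

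The main obstacle will be the analogue of Section~\ref{sec:theoretical} for $V^r$ in place of $V$. For $V$, a differential on $\cT(n+1)$ is killed iff its ``logarithmic'' part vanishes in an explicit sense, and the trace compatibility gives clean control. For $V^r$ the condition $V^r\omega = 0$ requires $V\omega \in \ker V^{r-1}$, and the kernel filtration $\ker V \subset \ker V^2 \subset \cdots$ does \emph{not} decompose simply with respect to the trace. Controlling this interaction would likely require introducing a finer filtration on $H^0(\cT(n+1),\Omega^1)$ by pole orders along the ramification, proving that $V$ shifts this filtration in a predictable way (losing a fixed amount in valuation modulo $p$), and then tracking how many basis elements survive $r$ applications. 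A secondary obstacle is that the appearance of the $\lambda(n) n$ term -- absent in the $r=1$ case according to the remark following Conjecture~\ref{MC} -- indicates that for $r \geq 2$ the recursion genuinely accumulates a linear-in-$n$ defect, and correctly identifying the coefficient $\lambda(n)$ in terms of local ramification data looks to be the most delicate part of the argument.
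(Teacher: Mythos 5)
This statement is a \emph{conjecture} that the paper does not prove: the authors offer extensive computational evidence and establish only the case $p=2$, $r=1$ (via Theorem~\ref{thm:tracevanishing}, Corollary~\ref{cor:traceeq0}, and Corollary~\ref{cor:anumberformula}, culminating in Corollaries~\ref{cor:anumberbasic} and \ref{cor:anumberstable}), explicitly noting in Section~\ref{ss:otherproof} that their trace methods give only very limited information for $r>1$ even in characteristic two. Your proposal is a research plan rather than a proof, and you candidly flag its two main obstacles yourself; those obstacles are precisely where the argument breaks and where the authors also stop.

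The decisive gap is the central mechanism: you propose to compute $\dk{r}{\cT(n)}$ as a lattice-point count in a polytope whose data are the ramification breaks $s_Q(\cT(n))$. But $\ker V^r$ is not spanned by Madden basis monomials --- it is the kernel of an iterated semilinear operator on the whole space, so its dimension is a corank, not a count of basis vectors satisfying inequalities. More damningly, the paper exhibits pairs of basic towers with \emph{identical} ramification data at every level for which $\dk{r}{\cT(n)}$ differs (Example~\ref{ex:p2d21}: $\dk{3}{\cT(2)}=31$ versus $\dk{3}{\cT'(2)}=33$; also Example~\ref{example:p3d7} for $r=1$ at level $1$). Any formula obtained purely from a polytope determined by the ramification would assign both towers the same value, so the proposed count cannot compute $\dk{r}{\cT(n)}$ exactly; at best it could bound it. The second gap is the trace step: for $r=1$ the paper shows $\ker V_Y\subseteq\ker\pi_*$ under a degree hypothesis, and for $p=2$ the kernel of $\pi_*$ is the pullback of a twisted sheaf on the base, which is what makes the $a$-number computable downstairs. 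For $r\ge 2$ the condition $V^r\eta=0$ only yields $\pi_*(V^{r-1}\eta)=0$, and the filtration $\ker V\subset\ker V^2\subset\cdots$ does not descend through the trace; Lemma~\ref{lem:p2higher} shows how much work is needed merely to handle $r=2$, $n=2$, $p=2$, and no inductive scheme of the kind you describe is known. Finally, the claimed origin of the $\lambda(n)\,n$ term from ``codimension-one facets'' is an unsupported heuristic; identifying $\lambda$ is exactly the part the authors single out as most mysterious.
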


Again, Conjecture~\ref{conj:stableasymptotic} predicts that the function $a(n)$ in Conjecture~\ref{conj:stablespecific} is a constant function taking on a specific value.   
Writing $s_Q(\cT(n)) = d_Q p^{n-1} + c_Q$ for $Q\in S$ and $n \gg 0$ with $c_Q, d_Q \in \Q$, it predicts that 
$$a(n) = \alpha(r,p) \left( \sum_{Q \in S} d_Q \right). $$ 

In Sections~\ref{sec:basicanumber}-\ref{sec:beyondbasic}, we provide evidence for these conjectures. 
We mainly focus on basic $\Z_p$-towers as they are easiest to compute with; note that Conjecture~\ref{MC}, which addressed basic towers, is compatible with these more general conjectures.  
We then give some additional examples of other monodromy stable towers as well as a few examples featuring non-monodromy stable towers that support Philosophy~\ref{philosophy} while exhibiting more complicated behavior.

\begin{remark}
Towers with periodic, non-stable monodromy do not seem to satisfy Conjecture~\ref{conj:stableanumber}.  There does appear to be similar formula for the $a$-number, but unsurprisingly the constants depend on the parity of $n$.   However, limited investigations suggest that towers with periodic monodromy may satisfy Conjecture~\ref{conj:stablespecific} as well; see Section~\ref{ss:periodic}.
\end{remark}

\begin{remark}  We are not completely confident that monodromy stable $\Z_p$-towers are the correct class of $\Z_p$-towers to consider.  After this paper first appeared as a preprint, Joe Kramer-Miller and James Upton suggested that these conjectures might only hold for overconvergent $\Z_p$-towers.  Basic tower are both monodromy stable and overconvergent, so since most of our evidence comes from computing with basic towers it is difficult to investigate the difference. 
\end{remark}

\section{\texorpdfstring{$a$}{a}-numbers for Basic Towers} \label{sec:basicanumber}
We first focus on the $a$-number of curves in basic $\Z_p$-towers $\cT$ (Definition~\ref{defn:basictower}) with ramification invariant $d$.    
By Lemma~\ref{lem:basicinvariants} (and noting Remark \ref{basicnotation}), we have $s(\cT(n)) = d p^{n-1}$.  Unwinding Notation~\ref{not:alpha}, we see that
\[
\alpha(p) = \alpha(1,p) = \frac{(p-1)}{4 (p+1) p}.
\]
In this case Conjecture~\ref{conj:stableasymptotic} predicts that 
\begin{equation}
\lim_{n \to \infty}  \frac{a(\cT(n))}{\alpha(p) d p^{2n}}  = 1.
\end{equation}
We now present a refinement of Conjecture~\ref{conj:stableanumber} and the $r=1$ case of Conjecture~\ref{MC}.

\begin{conjecture} \label{conj:basicanumber}
For every basic $\Z_p$-tower $\cT$ with ramification invariant $d$, there exists a positive integer $N_d$ (depending only on $d$ and $p$) and $c \in \bQ$ (depending on $\cT$) such that
\[
a(\cT(n)) = \dk{1}{\cT(n)} = \alpha(p) d p^{2n} + c \text{ for } n \geq N_d.
\]
\end{conjecture}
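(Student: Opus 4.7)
The plan is to analyze the $a$-number layer by layer along the Artin--Schreier cover $\pi_n : \cT(n) \to \cT(n-1)$, using the compatibility of the Cartier operator with pullback and trace, and to iterate this analysis along the tower. Since $\pi_{n,*} \circ V_{\cT(n)} = V_{\cT(n-1)} \circ \pi_{n,*}$ on regular differentials, one can decompose $\ker V_{\cT(n)}$ according to whether its trace to $\cT(n-1)$ lands in $\ker V_{\cT(n-1)}$. Knowing $a(\cT(n-1))$ inductively, the goal is to control the dimension of the ``new'' piece at level $n$.

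First, I would use Madden's explicit description of regular differentials on a standard-form Artin--Schreier cover $y^p - y = f$ to write a basis of $H^0(\cT(n), \Omega^1)$ adapted to the pole-order filtration at the unique ramified point $\infty$. In these coordinates $V$ acts in a structured way: it preserves the filtration by pole order up to a controlled shift by a power of $p$. Combined with Lemma~\ref{lem:basicinvariants}, which gives $d_\infty(\cT(n)) = d (p^{2n-1}+1)/(p+1)$ and pins down the geometry of the ramification, this reduces the question to an explicit linear-algebra problem about a matrix whose entries are polynomial expressions in the Witt-vector coefficients defining the tower.

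Next, I would prove a trace-and-Cartier identity for an Artin--Schreier cover that, given a differential on the top killed by $V$, relates its trace to an explicit ``local'' differential at the ramified point. Plugging in the ramification data $s(\cT(n)) = d p^{n-1}$, this identity should produce a recursion of the form
\[
a(\cT(n)) = a(\cT(n-1)) + \bigl(\text{local contribution at level } n\bigr),
\]
where the local contribution depends only on $d,p,n$ and the Witt coefficients. Because $s(\cT(n))$ grows by the factor $p$, the local contributions form a geometric-type series whose partial sums are a quadratic polynomial in $p^n$; the leading coefficient is forced to equal $\alpha(p) d$ by matching Conjecture~\ref{conj:stableasymptotic}, while the constant $c$ emerges once $n$ exceeds a threshold $N_d$ after which the transient early-level behavior has stabilized.

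The hard part is establishing the trace-and-Cartier identity with enough precision to close the recursion. For $p=2$, the degree-two cover $\pi_n$ forces any $V$-killed differential either to trace to zero or to pair with a canonical differential at $\infty$, and this dichotomy is what should yield the clean formula $a(\cT(n)) = (d/24)\cdot 2^{2n} + c$ promised in the introduction. For $p \geq 3$, the naive version of this identity will likely only recover the spread of bounds in \eqref{naiveguess} derived from \cite{bc20}: upper and lower bounds whose gap is itself of order $p^{2n}$, rather than an exact formula. Sharpening these to an equality would require exploiting the very restrictive form of the Witt coefficients in a basic tower (supported in a single component), and finding the right algebraic framework to record that restriction is, in my view, the main obstacle to proving Conjecture~\ref{conj:basicanumber} beyond the $p=2$ case.
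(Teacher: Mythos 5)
First, be clear about what is being asked: the statement is a \emph{conjecture}, and the paper itself proves it only for $p=2$ (Corollary~\ref{cor:anumberbasic}); your proposal likewise concedes that $p\ge 3$ is out of reach, so at best you are sketching the $p=2$ case. For $p=2$ your overall strategy --- exploit the compatibility of $\pi_*$ with the Cartier operator and a ``trace-and-Cartier identity'' at the ramified point --- is indeed the route the paper takes in Section~\ref{sec:theoretical}. The precise identity is Theorem~\ref{thm:tracevanishing}: if $V_Y\eta=0$ then $\ord_Q(\pi_*\eta)\ge d_Q-\lceil d_Q/p\rceil$ at each branch point, which by a degree count (Corollary~\ref{cor:traceeq0}) forces $\pi_*\eta=0$ once $\sum_Q(d_Q-\lceil d_Q/p\rceil)$ exceeds $2g(X)-2$; for $p=2$ the kernel of $\pi_*$ is exactly $H^0(X,\Omega^1_X(\sum_Q\lceil d_Q/2\rceil[Q]))$ (Lemma~\ref{lem:tracekernel}), giving $a(Y)=a(\Omega^1_X(\sum_Q\tfrac{d_Q+1}{2}[Q]))$ as in Proposition~\ref{prop:anumberbase}. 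Your ``dichotomy'' for degree-two covers is essentially this pair of facts.

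However, your sketch has two concrete gaps even when $p=2$. First, the reduction above does \emph{not} produce the recursion $a(\cT(n)) = a(\cT(n-1)) + (\text{local contribution})$ that you posit: it expresses $a(\cT(n))$ as the $a$-number of a \emph{twisted} sheaf $\Omega^1_{\cT(n-1)}(D)$ on the lower level, and evaluating that quantity is a separate, nontrivial step. The paper does it via Tango's theorem, quoting \cite[Corollary 6.13]{bc20}, to obtain the closed formula \eqref{eq:anumberformula}; without this step you have only an identification of vector spaces and no formula. Second, you fix the leading coefficient by ``matching Conjecture~\ref{conj:stableasymptotic}'', which is circular: that asymptotic is itself conjectural and is supposed to \emph{follow} from the formula you are proving. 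In the paper the coefficient $d/24$ falls out of the explicit evaluation of $a(\Omega^1(D))$ combined with $d_\infty(\cT(n)) = d(2^{2n-1}+1)/3$ from Lemma~\ref{lem:basicinvariants}. For $p\ge3$ your diagnosis of the obstruction is accurate --- the trace still vanishes, but $\ker\pi_*$ is too large to determine $a(\cT(n))$, and the conjecture remains open --- so as written the proposal does not prove the statement for any $p$ without supplying the missing evaluation step, and cannot prove it for $p\ge 3$ at all.
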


Note that $ \alpha(p) d p^{2n} $ need not be an integer, but it is straightforward to check that $\alpha(p) d (p^{2n}-p^2)$ is always an integer when $p > 2$. 
Thus for convenience we define
\begin{equation} 
    \delta_d (\cT(n)) \colonequals a(\cT(n)) - \alpha(p) d (p^{2n}-p^2).\label{eq:deltadef}
\end{equation}
Conjecture~\ref{conj:basicanumber} for a basic tower $\cT$ with ramification invariant $d$ is equivalent to $\delta_d(\cT(n))$ being constant for sufficiently large $n$.

\subsection{Examples in Characteristic Three} We begin by focusing on $\Z_3$-towers in characteristic $3$, which we analyzed using the methods of Section~\ref{sec:computing}.  

\begin{example} \label{example:p3d7}
Let $p=3$ and $d=7$.  Consider the basic towers 
\[
\cT_1 : Fy - y = [x^7], \quad  \cT_2 : Fy -y = [x^{7}] -[x^{5}] -[x^{2}],  \quad \cT_3 : Fy-y = [x^{7}] - [x^{5}].
\]
These towers have ramification invariant $7$, and the corresponding levels of each tower have the same genus.  Table~\ref{table:p3d7} shows they do not have identical $a$-numbers, although the $a$-numbers are highly constrained.  

\begin{table}[ht]
\centering
\begin{tabular}{c|c c c c c c }
Level: & 1 & 2 & 3 & 4 & 5 \\ 
\toprule
$g(\cT_1(n))$ & 6 & 66 & 624 & 5700 & 51546 \\ 
$a(\cT_1(n))$ & 4 & 25 & 214 & 1915 & 17224 \\ 
$a(\cT_2(n))$ & 3 & 24 & 213 & 1914 & 17223 \\ 
$a(\cT_3(n))$ & 3 & 24 & 213 & 1914 & 17223 \\ 
$\delta_7(\cT_1(n))$ & 4 & 4 & 4 & 4 & 4 \\ 
$\delta_7(\cT_2(n))$ & 3 & 3 & 3 & 3 & 3 \\ 
$\delta_7(\cT_3(n))$ & 3 & 3 & 3 & 3 & 3 \\ 
\bottomrule
\end{tabular}
\caption{Basic towers with $p=3$ and $d=7$, five levels}
\label{table:p3d7}
\end{table}

In particular, letting $\cT$ be any of these three towers, we observe that for $1 \leq n \leq 5$,
\begin{equation} \label{eq:p3d7}
a(\cT(n)) = 7 \alpha(3) (3^{2n}- 9) + a(\cT(1)) = \frac{7}{24} (3^{2n}- 9)  + a(\cT(1)).
\end{equation}
This holds for all of levels of all basic towers in characteristic $3$ with ramification invariant $7$ that we have computed.  (Including the previous examples, this is $4$ towers for which 5 levels were analyzed and $16$ for which $4$ levels where analyzed.) 
Note that by \cite[Theorem 6.26]{bc20}, $3 \leq a(\cT(1)) \leq 4$ for any $\Z_3$-tower with ramification invariant $7$.
\end{example}

\begin{example} \label{example:p3d5}
Let $p=3$ and $d=5$.  Consider the basic towers
\[
\cT_1 : Fy - y = [x^{5}] - [x^{2}] \quad \text{and} \quad \cT_2 : Fy - y = [x^{5}] - [x^{4}]  - [x] .
\]
Table~\ref{table:p3d5} shows that unlike for towers with ramification invariant $7$, the $a$-number of the first level does not determine the $a$-number of higher levels for $\cT_1$ and $\cT_2$.  

\begin{table}[ht]
\centering
\begin{tabular}{c|c c c c c c }
Level: & 1 & 2 & 3 & 4 & 5 \\ 
\toprule
$g(\cT_1(n))$ & 4 & 46 & 442 & 4060 & 36784 \\ 
$a(\cT_1(n))$ & 2 & 19 & 154 & 1369 & 12304 \\ 
$a(\cT_2(n))$ & 2 & 18 & 153 & 1368 & 12303 \\ 
$\delta_5(\cT_1(n))$ & 2 & 4 & 4 & 4 & 4 \\ 
$\delta_5(\cT_2(n))$ & 2 & 3 & 3 & 3 & 3 \\ 
\bottomrule
\end{tabular}
\caption{Basic towers with $p=3$ and $d=5$, five levels}
\label{table:p3d5}
\end{table}
For $n \geq 2$, it appears that
\[
a(\cT_1(n)) = \frac{5}{24} (3^{2n} - 9) + 4 \quad \text{and} \quad a(\cT_2(n)) =  \frac{5}{24} (3^{2n} - 9)  + 3.
\]
These formulae are not valid for $n=1$.  In particular, this illustrates that the restriction that $n$ is sufficiently large in Conjecture~\ref{conj:basicanumber} is necessary.  Based on our computations with $13$ towers (some with only four levels computed), it appears we may take $N_5 = 2$.  Also, note that by \cite[Theorem 6.26]{bc20}, $a(\cT(1))=2$ for any basic $\Z_3$-tower $\cT$ with ramification invariant $5$.
\end{example}

\begin{example} \label{example:p3d23}
Table~\ref{table:p3d23} shows the $a$-numbers of five selected basic $\Z_3$-towers with ramification invariant $23$.  The tower $\cT_1$ is $Fy - y = [x^{23}]$, while the other four towers are more complicated.\footnote{Computing $a(\cT_1(5))$ took around $5$ hours because of the tower's simple description, but it took over a month to compute $a(\cT_2(5))$.  This is why we have declined to compute the $a$-numbers of the $5$th levels for the remaining towers.}  For example, $\cT_2$ is 
\[
Fy - y = [x^{23}] +[x^{20}] +[x^{17}] +[x^{16}] +[x^{14}] -[x^{13}] -[x^{10}] -[x^{8}] -[x^{7}] -[x^{5}] +[x^{2}] +[x].
\]

\begin{table}[ht]
\centering
\begin{tabular}{c|c c c c c c }
n= & 1 & 2 & 3 & 4 & 5 \\ 
\toprule
$g(\cT_1(n))$ & 22 & 226 & 2080 & 18820 & 169642 \\ 
$a(\cT_1(n))$ & 12 & 83 & 706 & 6295 & 56596 \\ 
$a(\cT_2(n))$ & 10 & 80 & 702 & 6291 & 56592\\ 
$a(\cT_3(n))$ & 11 & 81 & 702 & 6291 \\ 
$a(\cT_4(n))$ & 12 & 81 & 702 & 6291 \\ 
$a(\cT_5(n))$ & 11 & 80 & 703 & 6292 \\ 
\\
$\delta_{23}(\cT_1(n))$ & 12 & 14 & 16 & 16 & 16 \\ 
$\delta_{23}(\cT_2(n))$ & 10 & 11 & 12 & 12 & 12\\ 
$\delta_{23}(\cT_3(n))$ & 11 & 12 & 12 & 12 \\ 
$\delta_{23}(\cT_4(n))$ & 12 & 12 & 12 & 12 \\ 
$\delta_{23}(\cT_5(n))$ & 11 & 11 & 13 & 13 \\ 
\bottomrule
\end{tabular}
\caption{Basic Towers with $p=3$ and $d=23$}
\label{table:p3d23}
\end{table}



We see the same basic phenomena as in Examples~\ref{example:p3d7} and \ref{example:p3d5}, although the stabilization is now more complicated.  It appears $\delta_{23}(\cT(n))$ may not stabilize until the third level, there are multiple choices for the $a$-number of level one, and $\delta_{23}(\cT(n))$ may jump multiple times.  Still, all of our examples are consistent with Conjecture~\ref{conj:basicanumber} holding with $N_{23}= 3$.
\end{example}

\begin{remark}
For basic $\Z_3$-towers, computing the $a$-number of the $5$th level is pushing the limit of what is feasible to compute as  illustrated by Example~\ref{example:p3d23}.  As the genus is growing exponentially with $n$, computing with the sixth level would require more time and memory than is reasonable.\footnote{This is not just a problem of limited resources.  {\sc Magma} imposes a limit on the number of monomials allowed in a multivariable polynomial expression.  Our program would run into this limit while attempting to construct an explicit representation of the sixth level as an Artin-Schreier extension of the fifth.}
\end{remark}

\subsection{Evidence in Characteristic Three} \label{ss:evidencep3}
In total, we have computed the $a$-number for the first four or five levels of at least $243$ basic $\Z_3$-towers.\footnote{As these computations are time intensive, we have made the results 
publicly available \cite[data\_storage]{bcgit}.}  The largest ramification invariant $d$ with which we have computed is $d=49$, and most of the computations of the fifth level of $\Z_3$-towers have taken place either for the tower $F y = y = [x^d]$ or with $d$ relatively small.  The computations take increasing amounts of time for larger $d$ as the genus of the $n$th level depends linearly on $d$ and the running time is polynomial in the genus.  For larger $d$, we analyzed five levels for the tower $Fy - y = [x^d]$ for $d$ up to $49$; 
as discussed in Remark~\ref{rmk:xdfast} this tower is quicker to compute with.\footnote{Despite being ``quicker'', computing  $a(\cT(5))$ for the tower $\cT:Fy- y = [x^{49}]$ took around $40$ hours.}

We also computed the first three levels of $510$ towers with ramification invariant up to thirty,\footnote{The results of these computations are stored in \cite[data\_storage\_small]{bcgit}.} carefully chosen so as to have diversity of $a$-numbers for the first level.  For each $d$, we searched through a large number of polynomials $f \in \F_3[x]$ of degree $d$ and computed the $a$-number of the Artin-Schreier curve 
\[
C_f : y^3 -y = f(x).
\]
For each value $\alpha$ of the $a$-number appearing frequently, we picked $10$ polynomials $f = \sum_{i=0}^d c_i x^i$ (with $c_d \neq 0$ and $c_i=0$ when $p \mid i$) such that $a(C_f) = \alpha$ and computed the $a$-numbers for the first three levels of the Artin-Schreier-Witt tower
\[
\cT_f: Fy - y = \sum_{i=0}^d [c_i x^i]
\]
whose first level is $C_f$. 

\begin{defn}
An integer $n>1$ is a {\em discrepancy} of a basic tower $\cT$ with ramification invariant $d$
if $\delta_d(\cT(n)) \neq \delta_d(\cT(n-1))$, where $\delta_d$ is as in \eqref{eq:deltadef}. 
\end{defn}

Conjecture~\ref{conj:basicanumber} is equivalent to the assertion that for each $d$, the largest discrepancy for a basic tower with ramification invariant $d$ is bounded independently of the tower.  If the conjecture holds, for $n$ sufficiently large $\delta_d(\cT(n)) $ would be the constant term $c$.

\begin{table}
\centering
\begin{tabular}{c|c c c c c c c c c c c c c c c c}
\toprule
$d$ & 2 & 4 & 5 & 7 & 8 & 10  & 11 & 13 & 14 & 16 & 17 & 19 \\ 
Discrepancies: & $\emptyset$ & $\emptyset$ & \{2\} & $\emptyset$  & $\emptyset$  & \{2\} & \{3\} & $\emptyset$ & \{2\} & \{2\} & \{3\} & \{2\}  \\
Towers:  & 4 & 25 & 13 & 40 & 25 & 25 & 25 & 36 & 25 & 36 & 36 &  36\\
\hline
$d$ & 20 & 22 & 23 & 25 & 26 & 28 & 29 & 31 & 32 & 34 & 35 & 37 \\
Discrepancies: & \{2\} & \{3\} & \{2,3\} &  \{2\} &  \{2\}  &  \{2,3\} &  \{2,4\} &  \{2\} &  \{2\} &  \{2,3\} &  \{2,4\} &  \{2\}           \\
Towers: & 36 & 47 & 37 & 47 & 46 & 48 & 47 & 10 & 9 & 9 & 10  & 9 \\
\hline
$d$ & 38 & 40 & 41 & 43 & 44 & 46 & 47 & 49 \\
Discrepancies: &  \{2,3\} &  \{2,3\} &  \{2,4\} &  \{2\}   &  \{2,3\} &  \{2,3\} &  \{2,4\} &  \{2,3\}  \\
Towers:  & 10 & 10 & 10 & 9 & 9 & 9 & 9 & 9  \\
\bottomrule
\end{tabular}
\caption{Observed Discrepancies for Basic Towers with Ramification Invariant $d<50$}
\label{table:discrepancies3}
\end{table}

Table~\ref{table:discrepancies3} shows the discrepancies for all of the towers we have collected data on with $d<50$ as well as the number of towers we analyzed for each $d$.  
(For small values of $d$, it is essential to work over extensions of $\FF_3$ as there are not that many basic towers defined over $\F_3$.)
This table supports Conjecture~\ref{conj:basicanumber} as it suggests that the discrepancies for towers with a given ramification invariant are bounded; 
the first time $n=2$ is a discrepancy is for $d=5$, the first time $n=3$ is a discrepancy is for $d=11$, and the first time $n=4$ is a discrepancy is for $d=29$.  In particular, we expect that for each basic $\Z_3$-towers with ramification invariant $d$ there exists $c \in \Z$ such that
\[
a(\cT(n)) = \alpha(3) d (3^{2n} -9) + c \quad \text{for} \quad n \gg 0,
\]
with the threshold for ``$n \gg 0$'' growing slowly with $d$.



\begin{remark}
    \begin{enumerate}
        \item As described above we have looked at fewer examples with $30 < d < 50$, so are less confident that we have identified all of the discrepancies possible for basic towers with that ramification invariant.  
        
        \item  In all of the examples we have looked at, $| \delta_d(\cT(n)) - \delta_d(\cT(n+1))| \leq 4 $.
        
        \item  As we have very few examples of computations with five levels and large $d$, and no computations in level six, it is difficult to be confident that the $a$-numbers for towers that have a discrepancy at level $n=4$
        actually stabilize.  For example, while the data in Table~\ref{table:p3d35} suggests that $\delta_{35}(\cT'(n))$ might stabilize for $n\geq 4$, we have no direct evidence that $\delta_{35}(\cT'(n))=18$ for $n \geq 4$.  
        However, we do see that for small $d$ (where the computations are fastest), the discrepancies (when there are any)
        are all very small, and only gradually increase as $d$ increases, which we find to be convincing evidence that all basic towers satisfy Conjecture~\ref{conj:basicanumber} for sufficiently large $N_d$.
            \end{enumerate}
\end{remark}

\begin{table}[ht]
\centering
\begin{tabular}{c|c c c c c c }
Level: & 1 & 2 & 3 & 4 & 5 \\ 
\toprule
$g(\cT(n))$ & 34 & 346 & 3172 & 28660 & 258214 \\ 
$a(\cT(n))$ & 20 & 127 & 1072 & 9579 & 86124 \\ 
$a(\cT'(n))$ & 17 & 122 & 1067 & 9573 \\ 
$\delta_{35}(\cT(n))$ & 20 & 22 & 22 & 24 & 24 \\ 
$\delta_{35}(\cT'(n))$ & 17 & 17 & 17 & 18 \\ 
\bottomrule
\end{tabular}
\caption{$\cT: Fy-y=[x^{35}]$, $\cT'$ also has ramification invariant $35$, $p=3$} \label{table:p3d35}
\end{table}

\subsection{Characteristic Two}  We now briefly discuss the $a$-numbers of $\Z_2$-towers in characteristic two.  Note that $\alpha(2) d = d/24$.  Table~\ref{table:p2d7} gives a representative example; it shows the $a$-numbers for \emph{any} basic $\Z_2$-tower with ramification invariant $7$.  
\begin{table}[ht]
\centering
\begin{tabular}{c|c c c c c c c c }
Level: & 1 & 2 & 3 & 4 & 5 & 6 & 7 \\ 
\toprule
$g(\cT(n))$ & 3 & 16 & 70 & 290 & 1178 & 4746 & 19050 \\ 
$a(\cT(n))$ & 2 & 5 & 19 & 75 & 299 & 1195 & 4779 \\ 
$a(\cT(n)) - 7 (2^{2n}-4)/24 + 1/2$  & 5/2 & 2 & 2 & 2 & 2 & 2 & 2 \\ 
\bottomrule
\end{tabular}
\caption{$\cT$ is any basic $\Z_2$-tower with ramification invariant $7$, seven levels}
\label{table:p2d7}
\end{table}

This is compatible with Conjecture~\ref{conj:basicanumber}.  In fact, for every positive odd integer $d$, the $a$-numbers of all basic towers with ramification invariant $d$ appear to be the same, and to support Conjecture~\ref{conj:basicanumber}.  
We are able to {\em prove} this: Corollary~\ref{cor:anumberbasic} will show that for any odd $d$ and all $n>1$
$$a(\cT(n)) = \frac{d}{24}( 2^{2n} - 4) + a(\cT(1)) - \frac{1}{2} = \frac{d}{6} (2^{2(n-1)}-1) + a(\cT(1)) - \frac{1}{2}.$$

\subsection{Other Characteristics}  Basic $\Z_p$-towers for $p>3$ are more difficult to compute with as the curves involved are of even higher genus.  (Recall the genus of the $n$-th level of a $\Z_p$-tower with ramification invariant $d$ is on the order of $d p^{2n}$ by Lemma~\ref{lem:basicinvariants}.)  We have only done substantial computations with a few simple towers in characteristic $5$. 

\begin{table}[ht]
\centering
\begin{tabular}{c|c c c c  || c | c  c c c }
Level: & 1 & 2 & 3 & 4 & Level: & 1 & 2 & 3 & 4\\ 
\toprule
$a(\cT_3(n))$ & 4 & 64 & 1564 & 39064 & $a(\cT_8(n))$ & 10 & 170 & 4170 & 104170 \\ 
$\delta_{3}(\cT_3(n))$ & 4 & 4 & 4 & 4 & $\delta_{8}(\cT_8(n))$ & 10 & 10 & 10 & 10 \\ 
$a(\cT_4(n))$ & 4 & 84 & 2084 & 52084 & $a(\cT_9(n))$ & 10 & 192 & 4692 & 117192 \\ 
$\delta_{4}(\cT_4(n))$ & 4 & 4 & 4 & 4 & $\delta_{9}(\cT_9(n))$ & 10 & 12 & 12 & 12 \\ 
$a(\cT_6(n))$ & 10 & 130 & 3130 & 78130 & $a(\cT_{11}(n))$ & 14 & 234 & 5734 & 143234 \\ 
$\delta_{6}(\cT_6(n))$ & 10 & 10 & 10 & 10 & $\delta_{11}(\cT_{11}(n))$ & 14 & 14 & 14 & 14 \\ 
$a(\cT_7(n))$ & 8 & 148 & 3650 & 91150 & $a(\cT_{12}(n))$ & 16 & 256 & 6256 & 156256 \\ 
$\delta_{7}(\cT_7(n))$ & 8 & 8 & 10 & 10 & $\delta_{12}(\cT_{12}(n))$ & 16 & 16 & 16 & 16 \\ 
\bottomrule
\end{tabular}
\caption{$\cT_d : Fy-y=[x^d]$ with $3 \leq d \leq 12$, four levels, $p=5$}
\label{table:p5}
\end{table}


\begin{example} \label{ex:p5}
 Table~\ref{table:p5} shows the $a$-numbers of the first four levels of the $\Z_5$-towers $\cT_d : Fy - y = [x^d]$ for small $d$.  All of these towers support Conjecture~\ref{conj:basicanumber} as $\delta_d(\cT_d(n))$ appears to be eventually constant.  To give context, $g(\cT_{12}(4)) = 390312$ and computing the $a$-number of $\cT_{12}(4)$ using the methods of Section~\ref{sec:computing} took around 253 hours, while $g(\cT_3(4)) = 97344$ and computing the $a$-number of $\cT_3(4)$ ``only'' took eight and a half hours.
 \end{example}
 
\begin{example}
As $\Z_p$-towers with $p>5$ are much slower to compute with, we have only been able to compute with the first two levels.  This is not enough to address Conjecture~\ref{conj:basicanumber}, but is enough to provide evidence for the leading term by computing 
\begin{equation} \label{eq:towererror}
\left| \frac{a(\cT(2))} { \alpha(p) d p^{4} } -1 \right|.
\end{equation}
We expect it to be close to zero.
\begin{itemize}
\item  When $p=7$, we computed the $a$-number for the second level of slightly over a thousand $\Z_7$-towers;  this quantity was less than $.015$ for all of them.

\item When $p=11$, we computed the $a$-number for the second level of around 650 $\Z_{11}$-towers; this quantity was less than $.0053$ for all of them.

\item  When $p=13$, we computed the $a$-number for the second level of eleven $\Z_{13}$-towers; this quantity was less than $.0051$ for all of them.
\end{itemize}
Approximating the leading term using just the second level in fact works better for larger $p$.  For example, when just looking at the second level there are $\Z_3$-towers with \eqref{eq:towererror} larger than $0.12$.  Of course, for those $\Z_3$-towers we have computed many more levels which support the conjectured leading term much better.
\end{example}
 
\section{Further Invariants for Basic Towers} \label{sec:basicgeneral}

We now investigate $\dk{r}{\cT(n)}$ for basic $\Z_p$-towers when $r >1$.  
We begin with a more precise version of Conjecture~\ref{conj:stablespecific} for basic $\Z_p$-towers which is a refinement of Conjecture~\ref{MC}.

\begin{notation} \label{notation:mr}
For fixed $r$ and $p$, write the rational number $\alpha(r,p) = \frac{r (p-1)}{2 (p+1) ( (p-1) r + (p+1))}$ from Notation~\ref{not:alpha} in lowest terms, and let $D$ be its denominator. Let $D'$ be the maximal divisor of $D$ which is prime to $p$.  
When $D' >1$ ({\em i.e.}~$D$ is not a power of $p$), we define
$m(r,p)$ to be the multiplicative order of $p^2$ modulo $D'$.
In the edge case that $D'=1$, we set $m(r,p)=0$.
\end{notation}

\begin{conjecture} \label{conj:basicpowers}
Fix a prime $p$ and positive integers $d$ and $r$.  If $m(r,p) =1$, then there exists a positive integer $N_{d,r}$ such that for any basic $\Z_p$-tower $\cT$ with ramification invariant $d$ there exists a rational number $c \in \bQ$ such that
\[
\dk{r}{\cT(n)} = \alpha(r,p) d p^{2n} + c \quad \text{ for } n \geq N_{d,r}.
\]
If $m(r,p)> 1$, then there exists a positive integer $N_{d,r}$ and $\lambda_{d,r}\in \bQ$ such that for any basic $\Z_p$-tower $\cT$ with ramification invariant $d$ there exists a function $c : \Z/m(r,p) \Z \to \Q$ such that
\[
\dk{r}{\cT(n)} = \alpha(r,p)d p^{2n} + c(n) + \lambda_{d,r} \cdot n \quad \text{ for } n \geq N_{d,r}.
\]
\end{conjecture}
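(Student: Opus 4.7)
The plan is to extend the trace-based method developed in Section~\ref{sec:theoretical} for the case $(p,r) = (2,1)$ to arbitrary $p$ and $r \geq 1$. The starting point is the explicit presentation of $H^0(\cT(n), \Omega^1)$ coming from Artin--Schreier--Witt theory combined with Madden's basis of regular differentials, in which $V$ acts by a matrix whose entries are polynomial in the coefficients $c_i$ defining the tower.

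First I would exploit the tower structure via the trace $\pi_\ast \colon H^0(\cT(n), \Omega^1) \to H^0(\cT(n-1), \Omega^1)$ of the $\Z/p\Z$-cover $\pi \colon \cT(n) \to \cT(n-1)$. Since $\pi_\ast$ commutes with $V$, restriction to $V^r$-kernels yields an exact sequence
\[
0 \to \ker(\pi_\ast) \cap \ker(V^r) \to \ker(V^r|_{\cT(n)}) \xrightarrow{\pi_\ast} \ker(V^r|_{\cT(n-1)}),
\]
reducing the problem to controlling the ``new'' contribution $N_n \colonequals \dim(\ker \pi_\ast \cap \ker V^r)$ at each level. Using the local expansion of differentials at the uniquely ramified place $\infty$ and the Cartier formula $V(x^{pj-1}\,dx) = x^{j-1}\,dx$, I would refine Madden's basis into a $V$-stable filtration indexed by pole orders modulo powers of $p$, so that $\ker(\pi_\ast) \cap \ker(V^r)$ becomes a combinatorial subspace whose dimension can be estimated level by level.

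The leading asymptotic should then emerge by a geometric-series recursion. The new differentials at level $n$ have dimension approximately $\tfrac{1}{2}d(p-1)p^{2n-2}$, and the fraction lying in $\ker V^r$ should approach the density $r(p-1)/((p-1)r + (p+1))$ predicted by Corollary~\ref{cor:ratio}. Summing $\sum_{k \leq n} N_k$ as a geometric series in $p^{2}$ produces precisely $\alpha(r,p) d p^{2n}$, reproducing the conjectured coefficient $\alpha(r,p) = \frac{r(p-1)}{2(p+1)((p-1)r + (p+1))}$. This recursion also explains why the $(p,r) = (2,1)$ case was cleanly handled in Section~\ref{sec:theoretical} by a single trace-of-$\wp$-preimages computation.

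The main obstacle is controlling the correction $c(n) + \lambda_{d,r}\, n$. Periodicity modulo $m(r,p)$ is essentially forced: since $\dk{r}{\cT(n)}$ is an integer while $\alpha(r,p) d p^{2n}$ may not be, the fractional part must be absorbed into $c(n)$, and because $p^{2n}$ cycles modulo the prime-to-$p$ part $D'$ of the denominator of $\alpha(r,p)$ with period $m(r,p)$, a periodic correction of that exact period appears naturally. A linear term $\lambda_{d,r}\cdot n$ can accumulate from boundary effects at the endpoints of each $V$-filtration block as $n$ grows. Making this rigorous seems to require tracking the full $k[V]$-module structure of $M_n^{V\nilp}$---not merely $\ker V^r$---which is substantially more subtle than the $r=1$ analysis of Section~\ref{sec:theoretical}. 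I would first attempt general $r$ for $p=2$, where Witt-vector arithmetic simplifies dramatically, then treat odd $p$ for $r=1$, and only then attempt the combined generalization.
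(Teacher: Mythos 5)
The statement you are addressing is Conjecture~\ref{conj:basicpowers}, which the paper does not prove: it is supported only by the computations of Sections~\ref{sec:basicanumber}--\ref{sec:basicgeneral}, and is established just in the single case $p=2$, $r=1$ (Corollary~\ref{cor:anumberbasic}, via Theorem~\ref{thm:tracevanishing}). So there is no proof in the paper to match your sketch against; the question is whether your outline closes the gap, and it does not.

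The central quantitative step is unjustified and, in the one case that is actually understood, runs contrary to what happens. You posit the sequence
\[
0 \to \ker(\pi_\ast)\cap\ker(V^r) \to \ker\bigl(V^r|_{\cT(n)}\bigr) \xrightarrow{\;\pi_\ast\;} \ker\bigl(V^r|_{\cT(n-1)}\bigr)
\]
and then sum the ``new'' contributions $N_k$ as a geometric series in $p^2$ to recover $\alpha(r,p)dp^{2n}$. But you never establish that the right-hand map is surjective, or even that its corank is controlled, and in the proven case $(p,r)=(2,1)$ the opposite occurs: Theorem~\ref{thm:tracevanishing} and Corollary~\ref{cor:traceeq0} show that, under a mild ramification hypothesis, \emph{every} $V$-killed regular differential has vanishing trace, so the entire kernel lies inside $\ker\pi_\ast$ and the image of your right-hand map is zero. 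The paper's argument then identifies $\ker\pi_\ast$ with global sections of a twisted sheaf of differentials on the base of the single cover and counts there---a one-step descent, not a level-by-level accumulation; your geometric-series recursion would not reproduce it. Your assertion that the fraction of new differentials lying in $\ker V^r$ ``should approach the density predicted by Corollary~\ref{cor:ratio}'' is circular, since that corollary is itself deduced from the conjecture you are trying to prove. Finally, the integrality heuristic only forces the fractional part of $\alpha(r,p)dp^{2n}$ to be periodic of period $m(r,p)$; it does not force the integer-valued correction $\dk{r}{\cT(n)} - \alpha(r,p)dp^{2n}$ to eventually have the form $c(n)+\lambda_{d,r}\,n$, which is the actual content of the statement. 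As written, the proposal is a research plan whose decisive steps are explicitly deferred, not a proof.
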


Note that the denominator of $\alpha(1,p)$ is $4 (p+1) p$, and hence $m(1,p)=1$ for any prime $p$.  Thus this conjecture is compatible with Conjecture~\ref{conj:basicanumber}.

\begin{remark}
The definition of $m(r,p)$ is natural as $\dk{r}{\cT(n)}$ must be an integer while $\alpha(r,p)d p^{2n}$ is often not an integer.  With $D$ as in Notation \ref{notation:mr}, for $n$ sufficiently large the congruence class of $p^{2n}$ modulo $D$ depends only on $n$ modulo $m(r,p)$.  
To obtain an integer prediction for $\dk{r}{\cT(n)}$ in the conjecture, it is therefore natural to expect a formula depending on $n$ modulo $m(r,p)$.

When $m(r,p)=0$ (i.e. $D'=1$) we still expect $\dk{r}{\cT(n)}$ to be of the form $\alpha(r,p)d p^{2n} + c(n) + \lambda \cdot n $ with $c(n)$ a function with period $m \geq 1$.  However, we do not make any prediction for the period.  In these cases, 
it seems that $\lambda$ and $m$ may depend more subtly on the tower, rather than just on $d,p$ and $r$;
see Example~\ref{ex:p2m=0}.
\end{remark}

For convenience while testing this conjecture, for a $\Z_p$-tower $\cT$ and rational number $\lambda$ we define
\begin{equation}\label{eq:disrcdef}
    \delta_{d,r}(\cT(n) , \lambda) := \dk{r}{\cT(n)} - \left( \alpha(r,p) d p^{2n} + \lambda n \right);
\end{equation}
c.f. equation \eqref{eq:deltadef}.  Analogously, we define:

\begin{definition} \label{defn:discrepancy2}
An integer $n>m(r,p)$ is a \emph{discrepancy} of a basic tower $\cT$ with ramification invariant $d$
for the $r$th power of the Cartier operator if $$\delta_{d,r}(\cT(n),\lambda_{d,r}) \neq \delta_{d,r}(\cT(n-m(r,p)),\lambda_{d,r}).$$
\end{definition} 

Conjecture~\ref{conj:basicpowers} is equivalent to $\delta_{d,r}(\cT(n), \lambda_{d,r})$ being eventually periodic with period $m(r,p)$ (for an appropriate choice of $\lambda_{d,r}$).  Equivalently, the largest discrepancy with respect to the $r$th power of the Cartier operator for towers with ramification invariant $d$ should be bounded independently of the tower.

\subsection{Characteristic Two Examples}
Because the constant term in our conjectured formula will often depend on the congruence class of $n$ modulo $m(r,p)$, the best evidence for this conjecture comes from characteristic two, where it is feasible to compute with more levels of the towers.  
While we have been able to prove an exact formula for $\dk{r}{\cT(n)}$ in characteristic $p=2$ when $r=1$ (for all $n$) in Corollary \ref{cor:anumberbasic}, we have been unable to generalize this result to larger values of $r$.  As such,
the evidence we collect below is 
necessarily computational in nature.

\begin{example} \label{ex:p2d21}
We begin by considering two basic $\Z_2$-towers with ramification invariant $21$.  Tables~\ref{table:p2d21a} and \ref{table:p2d21b} show the genus and the dimension of the kernel for the first ten powers of the Cartier operator for the first seven levels of these two towers. 
We see that $a(\cT(n)) = a(\cT'(n))$ for all $1 \leq n \leq 7$ and we see that $a^r(\cT(1)) = a^r(\cT'(1))$ for $1 \leq r \leq 10$, which we prove in Corollary~\ref{cor:anumberbasic} and Lemma~\ref{lem:p2higher}(\ref{p2higher:1}).  Beyond that, $a^r(\cT(n))$ will depend on the tower $\cT$.  For example, $\dk{2}{\cT(3)} = 94 \neq \dk{2}{\cT'(3)} = 95$ and $\dk{3}{\cT(2)} = 31 \neq \dk{3}{\cT'(2)} = 33$.

\begin{table}[ht]
\centering
\begin{tabular}{c|c c c c c c c c }
n= & 1 & 2 & 3 & 4 & 5 & 6 & 7 \\ 
\toprule
$g(\cT(n))$  & 10 & 51 & 217 & 885 & 3565 & 14301 & 57277 \\ 
$\dk{1}{\cT(n)}$ & 5 & 16 & 58 & 226 & 898 & 3586 & 14338 \\ 
$\dk{2}{\cT(n)}$ & 8 & 25 & 94 & 363 & 1440 & 5741 & 22946 \\ 
$\dk{3}{\cT(n)}$ & 9 & 31 & 116 & 452 & 1796 & 7172 & 28676 \\ 
$\dk{4}{\cT(n)}$ & 10 & 36 & 131 & 517 & 2055 & 8198 & 32776 \\ 
$\dk{5}{\cT(n)}$ & 10 & 40 & 142 & 562 & 2242 & 8962 & 35842 \\ 
$\dk{6}{\cT(n)}$ & 10 & 43 & 152 & 603 & 2399 & 9563 & 38238 \\ 
$\dk{7}{\cT(n)}$ & 10 & 45 & 162 & 635 & 2515 & 10045 & 40150 \\ 
$\dk{8}{\cT(n)}$ & 10 & 47 & 169 & 660 & 2610 & 10432 & 41715 \\ 
$\dk{9}{\cT(n)}$ & 10 & 48 & 175 & 680 & 2696 & 10760 & 43016 \\ 
$\dk{10}{\cT(n)}$ & 10 & 49 & 180 & 696 & 2768 & 11031 & 44116 \\ 
\bottomrule
\end{tabular}
\caption{$\cT: Fy-y=[x^{21}] +[x^{19}] +[x^{15}] +[x^{13}] +[x^{9}]$ with $(p,d) = (2,21)$} \label{table:p2d21a}
\end{table}

\begin{table}[ht]
\centering
\begin{tabular}{c|c c c c c c c c }
n= & 1 & 2 & 3 & 4 & 5 & 6 & 7 \\ 
\toprule
$g(\cT'(n))$ & 10 & 51 & 217 & 885 & 3565 & 14301 & 57277 \\ 
$\dk{1}{\cT'(n)}$ & 5 & 16 & 58 & 226 & 898 & 3586 & 14338 \\ 
$\dk{2}{\cT'(n)}$ & 8 & 25 & 95 & 363 & 1441 & 5741 & 22947 \\ 
$\dk{3}{\cT'(n)}$ & 9 & 33 & 117 & 453 & 1797 & 7173 & 28677 \\ 
$\dk{4}{\cT'(n)}$ & 10 & 39 & 131 & 519 & 2057 & 8198 & 32778 \\ 
$\dk{5}{\cT'(n)}$ & 10 & 42 & 142 & 562 & 2242 & 8962 & 35842 \\ 
$\dk{6}{\cT'(n)}$ & 10 & 45 & 152 & 603 & 2400 & 9563 & 38238 \\ 
$\dk{7}{\cT'(n)}$ & 10 & 47 & 162 & 637 & 2515 & 10047 & 40150 \\ 
$\dk{8}{\cT'(n)}$ & 10 & 49 & 171 & 662 & 2610 & 10432 & 41718 \\ 
$\dk{9}{\cT'(n)}$ & 10 & 50 & 179 & 683 & 2699 & 10763 & 43019 \\ 
$\dk{10}{\cT'(n)}$ & 10 & 51 & 185 & 697 & 2769 & 11031 & 44116 \\ 
\bottomrule
\end{tabular}
\caption{$\cT' : Fy-y=[x^{21}] +[x^{13}] +[x^{9}] +[x^{5}] +[x^{3}]$ with $(p,d) = (2,21)$} \label{table:p2d21b}
\end{table}

\begin{table}
    \centering
    \begin{tabular}{c|c c c c c c c c c c}
     r    & 1 & 2 & 3 & 4 & 5 & 6 & 7 & 8 & 9 & 10 \\
         \toprule
     $21 \alpha(r,2)$ & 7/8 & 7/5 & 7/4 & 2 & 35/16 & 7/3 & 49/20 & 28/11 & 21/8 & 35/13 \\
     $m(r,2)$ & 1 & 2 & 1 & 3 & 1 & 3 & 2 & 5 & 0  & 6\\
    \bottomrule
    \end{tabular}
    \caption{Constants for $(p,d) = (2,21)$}
    \label{table:constants212}
\end{table}

Table~\ref{table:constants212} shows $21 \alpha(r,2)$ and $m(r,2)$ for $1 \leq r \leq 10$.
Our computations with the first seven levels of $\cT$ and $\cT'$ support Conjecture~\ref{conj:basicpowers}.
For example, we see that for $1 < n \leq 7$
\[
\dk{2}{\cT(n)} = \begin{cases}
\frac{7}{5}( 2^{2n}+1) + n & n \text{ odd} \\
\frac{7}{5} ( 2^{2n}-1) +n  +2 & n \text{ even}
\end{cases}
\quad
\dk{2}{\cT'(n)} = \begin{cases}
\frac{7}{5}( 2^{2n}+1) + n +1 & n \text{ odd} \\
\frac{7}{5} ( 2^{2n}-1) +n  +2 & n \text{ even.}
\end{cases}
\]
Note that $m(2,2)=2$ as expected.
Likewise for $2<  n \leq 7$ 
\[
\dk{3}{\cT(n)} = \frac{7}{4} \cdot 2^{2n} + 4, \quad \dk{3}{\cT'(n)} = \frac{7}{4} 2^{2n} + 5.  
\]
Furthermore,
\[
\dk{4}{\cT(n)} = \begin{cases}
2 \cdot 2^{2n} + n,  & {n \equiv 0 \pmod{3}} \\
2 \cdot 2^{2n} + n  + 1, & n \equiv 1 \pmod{3} \\
2 \cdot 2^{2n} + n  + 2, & n \equiv 2 \pmod{3} \\
\end{cases}
\quad
\dk{4}{\cT'(n)} = \begin{cases}
2 \cdot 2^{2n} + n,  & {n \equiv 0 \pmod{3}} \\
2 \cdot 2^{2n} + n  + 3, & n \equiv 1 \pmod{3} \\
2 \cdot 2^{2n} + n  + 4, & n \equiv 2 \pmod{3} \\
\end{cases}
\]
with $1 <n \leq 7$ for $\cT$ and $2 < n \leq 7$ for $\cT'$.  Considering the fifth power, for $2 < n \leq 7$
\[
\dk{5}{\cT(n)} = \dk{5}{\cT'(n)} = \frac{35}{16} 2^{2n} + 2.
\]
There appear to be similar formulas with $\lambda=1$ for $\dk{r}{\cT(n)}$ depending on $n$ modulo $3$ for $r = 6$ and depending on $n$ modulo $2$ for $r=7$.  These are all compatible with Conjecture~\ref{conj:basicpowers}.    There are not obvious formulas of a similar nature when $r=8$ or $r=10$, but our conjecture predicts that the formulas would depend on $n$ modulo $5$ or $6$.  With only seven levels of the tower and with the invariants taking a couple of levels to stabilize, we would not expect to see periodic behavior.  When $r=8$ and $r=10$, the dimensions are quite close to $\alpha(r,p) d p^{2n}$ as expected.  When $r=9$, as the denominator of $\alpha(9,2) = 21/8$ is a power of two we don't make a prediction for the period.  It appears that the period is one, as for $4 \leq n \leq 7$
\[
\dk{9}{\cT(n)} = \frac{21}{8} 2^{2n} + 8, \quad \dk{9}{\cT'(n)} = \frac{21}{8} 2^{2n} + 11.
\]
\end{example}

\begin{example}
Consider the $\Z_2$-towers $\cT: Fy - y = [x^9] + [x^3] + [x]$ and $\cT': Fy - y = [x^9] + [x]$.  It appears that $\lambda_{9,2} = 1/2 $, $\lambda_{9,4} = 1/3$, and $\lambda_{9,7} = 1/2$.  Table~\ref{table:p2d9} shows some selected values of $\delta_{d,r}(\cT(n),\lambda_{d,r})$ and $\delta_{d,r}(\cT'(n),\lambda_{d,r})$.  These all support Conjecture~\ref{conj:basicpowers}, which predicts that the tower will have period $2$ (resp. $3$, $2$) when $r=2$ (resp. $4$, $7$).  However, there are now larger discrepancies.
For example, it looks as if $a^2(\cT(n))  = a^2(\cT'(n)) = 3\cdot 2^{2n}/5 + n/2 + c(n)$ where $c(n) = 1/10$ if $n$ is odd and $c(n) = -3/5$ if $n$ is even, \emph{except} for $n=2,3$.  

\begin{table}[ht]
\centering
\begin{tabular}{c|c c c c c c c c }
Level: & 1 & 2 & 3 & 4 & 5 & 6 & 7 \\ 
\toprule
$\delta_{9,2}(\cT(n),1/2)$ & 1/10 & 2/5 & 11/10 & -3/5 & 1/10 & -3/5 & 1/10 \\ 
$\delta_{9,2}(\cT'(n),1/2)$ & 1/10 & 2/5 & 11/10 & -3/5 & 1/10 & -3/5 & 1/10 \\ 
$\delta_{9,4}(\cT(n),1/3)$ & 5/21 & 97/21 & 1/7 & 26/21 & 76/21 & 1/7 & 26/21 \\ 
$\delta_{9,4}(\cT'(n),1/3)$ & 5/21 & 55/21 & 1/7 & 26/21 & 76/21 & 1/7 & 26/21 \\ 
$\delta_{9,7}(\cT(n),1/2)$ & -7/10 & 16/5 & 3/10 & 21/5 & -7/10 & 16/5 & -7/10 \\ 
$\delta_{9,7}(\cT'(n),1/2)$ & -7/10 & 16/5 & 3/10 & 21/5 & -7/10 & 16/5 & -7/10 \\
\bottomrule
\end{tabular}
\caption{``Constant Terms'' for $\cT$ and $\cT'$, $(p,d) = (2,9)$} \label{table:p2d9}
\end{table}
\end{example}

\begin{example} \label{ex:p2longperiod}
The tower $Fy - y = [x^3]$ is simple enough that we have been able to compute with the eighth level, allowing us to see some slightly longer periods.  When $r=8$ (resp. $r=10$) observe that $m(r,2) = 5$ (resp. $m(r,2)=6$).  Table~\ref{table:p2d3} shows the beginnings of periodic behavior of the expected period.  This example is quite simple as the ramification invariant is so small; we estimate that $\lambda_{3,r}=0$ and the low levels of the tower do not appear to have any irregularities relative to the rest of the tower.
\begin{table}[ht]
\centering
\begin{tabular}{c|c c c c c c c c c }
Level: & 1 & 2 & 3 & 4 & 5 & 6 & 7 & 8 \\ 
\toprule
$\delta_{3,8}(\cT(n),0)$ & -5/11 & 2/11 & -3/11 & 10/11 & -4/11 & -5/11 & 2/11 & -3/11 \\ 
$\delta_{3,10}(\cT'(n),0)$ & -7/13 & -2/13 & 5/13 & -6/13 & 15/13 & -5/13 & -7/13 & -2/13 \\ 
\bottomrule
\end{tabular}
\caption{``Constant Terms'' for $Fy - y = [x^3]$, $p=2$} \label{table:p2d3}
\end{table}
\end{example}

\begin{example} \label{ex:p2highpowers}
When $r$ is large, it is difficult to test Conjecture~\ref{conj:basicpowers} as $m(r,p)$ is often too big to see periodic behavior given the number of levels we are able to compute.  Furthermore, as $V_{\cT(n)}$
is nilpotent, for any fixed $n$ the genus of $\cT(n)$ is equal to $\dk{r}{\cT(n)}$ for $r$ sufficiently large, which means we would need additional levels to see the behavior for large powers of the Cartier operator.

Consider the $\Z_2$-towers 
\begin{align*}
    \cT &: Fy - y = [x^{19}] + [x^{17}] + [x^{13}] + [x^5] + [x^3] \\
    \cT' &: Fy - y = [x^{19}] + [x^{17}] + [x^{15}] + [x^{11}] + [x^9] + [x^7] + [x^5] + [x].
\end{align*}We computed $\dk{r}{\cT(n)}$ for $r \leq 200$ and $n \leq 7$.  For $r=13$ and $r=17$, we see the expected behavior with periods $1$ and $2$ as predicted.  On the other hand, for $r=125$ our conjecture predicts the period to be one but we cannot see this; $\delta_{19,125}(\cT(n),0)$ and $\delta_{19,125}(\cT'(n),0)$ do not appear to be constant.  However, this is not so surprising as for $n \leq 5$ we have that $\dk{125}{\cT(n)} = g(\cT(n))$ and likewise for $\cT'$.  It is only for larger values of $n$ that we would expect to see the finer behavior of $\dk{125}{\cT(n)}$, and computing with $n \leq 7$ only gives two ``interesting'' levels.
\end{example}

\begin{example} \label{ex:p2m=0}
Our conjecture does not predict the period in the edge case that $m(r,2) =0$; this case appears more subtle.  For example, $\alpha(9,2) = 1/8 $ and hence $m(9,2)=0$, while Table \ref{table:p2d19} shows $\delta_{19,9}(\cT(n),0) = \dk{9}{\cT(n)} - 19 \cdot 2^{2n-3}$ for the two towers with ramification invariant $19$ in Example~\ref{ex:p2highpowers}.  It looks like the tower $\cT$ has period one and $\lambda =0$, while $\cT'$ has period two with $\lambda =1/2$.  
\end{example}

\begin{table}[ht]
\centering
\begin{tabular}{c|c c c c c c c c }
Level: & 1 & 2 & 3 & 4 & 5 & 6 & 7 \\ 
\toprule
$\delta_{19,9}(\cT(n),0)$ &  $-1/2$ & 8 & 8 & 8 & 8 & 8 & 8\\
$\delta_{19,9}(\cT'(n),0)$ & $-1/2$ & 5 & 6 & 6 & 7 & 7 & 8 \\
\bottomrule
\end{tabular}
\caption{``Constant Terms'' for $\cT$ and $\cT'$, $(p,d) = (2,19)$} \label{table:p2d19}
\end{table}

We have systematically tested Conjecture~\ref{conj:basicpowers} against a collection of at least $221$ basic $\Z_2$-towers where we analyzed at least $5$ levels (we analyzed seven levels for $55$ of them).   
For each ramification invariant $d$, we picked one tower $\cT_0$ where we had computed seven levels and used it to estimate $\lambda_{d,r}$ by computing\footnote{Recall we predict that $\lambda_{d,r}$ depends only on $d$ and $r$, and not on the specific tower.}
\[
\frac{\left( \dk{r}{\cT_0(7)} - \alpha(r,p) d p^{14} \right) - \left( \dk{r}{\cT_0(7-m(r,p))} - \alpha(r,p) d p^{2(7-m(r,p))} \right)}{m(r,p)}.
\]
If Conjecture~\ref{conj:basicpowers} held for $n \geq 7 - m(r,p)$, this ratio would equal $\lambda_{d,r}$. 
Furthermore, if Conjecture~\ref{conj:basicpowers} holds and we have the correct $\lambda_{d,r}$, for $n$ large enough $\delta_{d,r}(\cT(n),\lambda_{d,r})$ would equal $c(n)$.

Tables~\ref{table:discrepanciesp2r2}, \ref{table:discrepanciesp2r3}, and \ref{table:discrepanciesp2r4} show the discrepancies we have found in our database for $d<24$ and $r=2,3,4$.  Table~\ref{table:discrepanciesp2r2} shows the number of towers under consideration with each ramification invariant.   Note that the smallest possible discrepancy is $m(r,2)+1$, so is $3$ when $r=2$, $2$ when $r=3$, and $4$ when $r=4$.  These tables support Conjecture~\ref{conj:basicpowers} as the discrepancies appear to only occur for relatively small $n$ ($7$ is the largest potential discrepancy we would see using our data).  When $r=5$ (and $m(r,2)=1$) we only see discrepancies at levels two and three.

\begin{table}[ht]
\centering
\begin{tabular}{c|c c c c c c c c c c c c c c}
\toprule
$d$ & 3 & 5 & 7 & 9 & 11 & 13  & 15 & 17 & 19 & 21 & 23  \\ 
Discrepancies: & $\emptyset$  & $\{ 4, 5 \}$ & $\{ 3 \}$ & $\{ 3, 4, 5 \}$ & $\emptyset$  & $\{ 3, 5 
\}$ & $\{ 3, 4, 5 \}$ & $\{ 3 \}$ & $\emptyset$  & $\{ 3 \}$ & $\emptyset$  \\
Towers: & 2 & 4 & 8 & 14 & 24 & 31 & 40 & 36 & 42 & 6 & 6\\
\bottomrule
\end{tabular}
\caption{Some Observed Discrepancies for Basic $\Z_2$-Towers, $r=2$}
\label{table:discrepanciesp2r2}
\end{table}
\begin{table}[ht]
\centering
\begin{tabular}{c|c c c c c c c c c c c c c c}
\toprule
$d$ & 3 & 5 & 7 & 9 & 11 & 13  & 15 & 17 & 19 & 21 & 23  \\ 
Discrepancies: & $\emptyset$  & $\{ 2 \}$ & $\emptyset$  & $\{ 2 \}$ & $\{ 2, 4 \}$ & $\{ 2 \}$ & $\{ 2
\}$ & $\{ 2, 3, 4 \}$ & $\{ 2, 3, 6 \}$ & $\{ 2, 3 \}$ & $\{ 2, 4, 5 \}$
 \\
\bottomrule
\end{tabular}
\caption{Some Observed Discrepancies for Basic $\Z_2$-Towers, $r=3$}
\label{table:discrepanciesp2r3}
\end{table}
\begin{table}[ht]
\centering
\begin{tabular}{c|c c c c c c c c c c c c c c}
\toprule
$d$ & 3 & 5 & 7 & 9 & 11 & 13  & 15 & 17 & 19 & 21 & 23  \\ 
Discrepancies:  & $\emptyset$  & $\{ 5 \}$ & $\emptyset$  & $\{ 4, 5 \}$ & $\{ 5 \}$ & $\{ 4, 5 \}$ & 
$\{ 5, 6 \}$ & $\{ 4, 5 \}$ & $\{ 4, 5 \}$ & $\{ 4, 5 \}$ & $\{ 4, 5, 7 \}$  \\
\bottomrule
\end{tabular}
\caption{Some Observed Discrepancies for Basic $\Z_2$-Towers, $r=4$}
\label{table:discrepanciesp2r4}
\end{table}

\begin{remark}
In all of the examples we have computed, $| \delta_{d,r}(\cT(n) , \lambda_{d,r}) - \delta_{d,r}(\cT(n+1),\lambda_{d,r})| \leq 1 $ when $r=2$ (respectively $\leq 3$ when $r=3$ and $\leq 2$ when $r=4$).  
\end{remark}



\subsection{Characteristic Three}  Now let $p=3$.  The evidence for Conjecture~\ref{conj:basicpowers} is a bit weaker in characteristic three as our computations are limited to at most 5 levels.

\begin{example}
We begin by considering two basic $\Z_3$-towers with ramification invariant $5$.  Tables~\ref{table:p3d5a} and \ref{table:p3d5b} show the genus and the dimension of the kernel of the first ten powers of the Cartier operator for the first five levels of these two towers.  Table~\ref{table:constants35} shows $5 \alpha(r,3) $ and $m(r,3)$.  These examples support Conjecture~\ref{conj:basicpowers}. 

\begin{table}[ht]
\centering
\begin{tabular}{c|c c c c c c }
n= & 1 & 2 & 3 & 4 & 5 \\ 
\toprule
$g(\cT(n))$ & 4 & 46 & 442 & 4060 & 36784 \\ 
$\dk{1}{\cT(n)}$ & 2 & 19 & 154 & 1369 & 12304 \\ 
$\dk{2}{\cT(n)}$ & 4 & 26 & 230 & 2052 & 18456 \\ 
$\dk{3}{\cT(n)}$ & 4 & 31 & 275 & 2461 & 22145 \\ 
$\dk{4}{\cT(n)}$ & 4 & 35 & 305 & 2735 & 24605 \\ 
$\dk{5}{\cT(n)}$ & 4 & 39 & 326 & 2930 & 26365 \\ 
$\dk{6}{\cT(n)}$ & 4 & 42 & 344 & 3076 & 27680 \\ 
$\dk{7}{\cT(n)}$ & 4 & 45 & 362 & 3197 & 28712 \\ 
$\dk{8}{\cT(n)}$ & 4 & 46 & 368 & 3281 & 29525 \\ 
$\dk{9}{\cT(n)}$ & 4 & 46 & 374 & 3358 & 30197 \\ 
$\dk{10}{\cT(n)}$ & 4 & 46 & 380 & 3422 & 30756 \\ 
\bottomrule
\end{tabular}
\caption{$\cT: Fy-y=[x^{5}] +[2x^{2}]$  with $(p,d) = (3,5)$} \label{table:p3d5a}
\end{table}

\begin{table}[ht]
\centering
\begin{tabular}{c|c c c c c c }
n= & 1 & 2 & 3 & 4 & 5 \\ 
\toprule
$g(\cT(n))$ & 4 & 46 & 442 & 4060 & 36784 \\ 
$\dk{1}{\cT(n)}$ & 2 & 18 & 153 & 1368 & 12303 \\ 
$\dk{2}{\cT(n)}$ & 4 & 26 & 230 & 2052 & 18456 \\ 
$\dk{3}{\cT(n)}$ & 4 & 31 & 275 & 2461 & 22145 \\ 
$\dk{4}{\cT(n)}$ & 4 & 35 & 305 & 2735 & 24605 \\ 
$\dk{5}{\cT(n)}$ & 4 & 39 & 326 & 2930 & 26365 \\ 
$\dk{6}{\cT(n)}$ & 4 & 42 & 344 & 3076 & 27680 \\ 
$\dk{7}{\cT(n)}$ & 4 & 45 & 360 & 3195 & 28710 \\ 
$\dk{8}{\cT(n)}$ & 4 & 46 & 368 & 3281 & 29525 \\ 
$\dk{9}{\cT(n)}$ & 4 & 46 & 374 & 3358 & 30197 \\ 
$\dk{10}{\cT(n)}$ & 4 & 46 & 380 & 3422 & 30756 \\ 
\bottomrule
\end{tabular}
\caption{$\cT' : Fy-y=[x^{5}] +[2x^{4}] +[2x]$ with $(p,d) = (3,5)$} \label{table:p3d5b}
\end{table}

\begin{table}[ht]
    \centering
    \begin{tabular}{c|c c c c c c c c c c}
     r    & 1 & 2 & 3 & 4 & 5 & 6 & 7 & 8 & 9 & 10 \\
         \toprule
     $5 \alpha(r,3)$ & 5/24 & 5/16 & 3/8 & 5/12 & 25/56 & 15/32 & 35/72 & 1/2 & 45/88 & 25/48 \\
     $m(r,3)$ & 1 & 2 & 2 & 1 & 3 & 4 & 1 &2 & 5 & 2\\
    \bottomrule
    \end{tabular}
    \caption{Constants for $(p,d) = (3,5)$}
    \label{table:constants35}
\end{table}

 For $1 < n \leq 5$, observe that
\[
\dk{2}{\cT(n)} = \dk{2}{\cT'(n)} = \begin{cases}
\frac{5}{16} ( 3^{2n} -9) + \frac{n-1}{2} + 4 & n \text{ odd} \\
\frac{5}{16} ( 3^{2n} -1) + \frac{n}{2}  & n \text{ even} 
\end{cases}
\]
while $m(2,3) = 2$ as expected.  Similarly, for $1 < n \leq 5$ we see
\[
\dk{3}{\cT(n)} = \dk{3}{\cT'(n)} = \begin{cases}
\frac{3}{8} ( 3^{2n} -1) + 2 & n \text{ odd} \\
\frac{3}{8} ( 3^{2n} -1) + 1  & n \text{ even.} 
\end{cases}
\]
Furthermore, for $1 < n \leq 5$ we have
\[
\dk{4}{\cT(n)} = \dk{4}{\cT'(n)} = \frac{5}{12} (3^{2n}-9) +5.
\]
We expect $\dk{5}{\cT(n)}$ to depend on $n$ modulo $3$, and one might optimistically conjecture that for $n \geq 1$
\[
\dk{5}{\cT(n)} = \dk{5}{\cT'(n)} =  \begin{cases}
\frac{25}{56} ( 3^{2n} - 9) + \frac{n-1}{3} + 4 & n \equiv 1 \pmod{3} \\
\frac{25}{56} ( 3^{2n} - 25) + \frac{n-2}{3} + 14 & n \equiv 2 \pmod{3} \\
\frac{25}{56} ( 3^{2n} - 1) + \frac{n}{3}  & n \equiv 0 \pmod{3}. \\
\end{cases}
\]
This is consistent with our data but is weaker evidence for Conjecture~\ref{conj:basicpowers}, as there is only one multiple of three for which we have computed with $\cT(n)$.  As we have chosen the constant term of the $n \equiv 0 \pmod{3}$ case so that $\dk{5}{\cT(3)}$ is correct, that case is somewhat vacuous.

It is somewhat of a coincidence that $\dk{r}{\cT(n)} = \dk{r}{\cT'(n)}$ for $r = 2,3,4,5$.  These are not equal when $r=1$, or when $r=7$ where we find
\[
\dk{7}{\cT(n)} = \frac{35}{72} (3^{2n} -9^2) + 47 \quad\text{and}\quad \dk{7}{\cT'(n)} = \frac{35}{72} (3^{2n} -9^2) + 45 
\]
for $3 \leq n \leq 5$.  
 When $r=10$ we see a similar formula with the correct leading term, $\lambda=1/2$, and period $2$.  When $r=8$, observe that for $n=4,5$
  \[
 \dk{8}{\cT(n)} = \dk{8}{\cT'(n)} =  3^{2n}/2 + 1/2.
 \]
This suggests the $\dk{8}{\cT(n)}$ may have period one, while the predicted period is $m(8,3)=2$.  (This does not  contradict Conjecture \ref{conj:basicpowers}, as any function $c: \Z/m\Z\rightarrow \Q$
may be considered as a function on $\Z/m\ell \Z$ for each positive integer $\ell$.)
There are no obvious periodic formulas when $r=6$ or $r=9$, but our conjecture predicts these would depend on $n$ modulo $4$ or $5$, so with only five levels of the tower we can not expect to witness periodic behavior.  In these cases, the dimensions are quite close to $\alpha(r,3) \cdot d \cdot 3^{2n}$ as expected. 
\end{example}

We can systematically test Conjecture~\ref{conj:basicpowers} against the collection of basic $\Z_3$-towers described in Section~\ref{ss:evidencep3} where we had computed invariants for four or five levels.  For most of these towers, we computed with the first five powers of the Cartier operator.  (The unusual case that $m(r,3) =0$ does not occur for $r < 5$.)  For fixed $d$ and $r$, we used one of the towers $\cT$ where we had computed five levels (often $Fy - y = x^d$) to predict $\lambda_{d,r}$ by computing
\[
\frac{\left( \dk{r}{\cT(5)} - \alpha(r,p) d p^{10} \right) - \left( \dk{r}{\cT(5-m(r,p))} - \alpha(r,p) d p^{2(5-m(r,p))} \right)}{m(r,p)}.
\]
If Conjecture~\ref{conj:basicpowers} holds for $\cT(n)$ with $n \geq 5 - m(r,p) > N_d$, this ratio is precisely $\lambda_{d,r}$.

Using this prediction for $\lambda_{d,r}$, Table~\ref{table:discrepanciesp3r4} shows all the discrepancies (recall Definition~\ref{defn:discrepancy2}) with $r=4$ and $d<30$, and supports Conjecture~\ref{conj:basicpowers}.  Note that in this situation $m(r,3)=1$. 

\begin{table}[ht]
\centering
\begin{tabular}{c|c c c c c c c c c c c c c c}
\toprule
$d$ & 2 & 4 & 5 & 7 & 8 & 10  & 11 & 13 & 14 & 16  \\ 
Discrepancies: & $\emptyset$  &  $\emptyset$ &    \{ 2 \} &    \{ 2 \} &   $\emptyset$ &    \{ 2 \} &    \{ 2 \} &    \{ 2 \} &    \{ 2 \} &    \{ 2 \} \\
Towers:  & 4 & 25 & 13 & 40 & 25 & 25 & 25 & 36 & 25 & 36 \\
\hline
$d$ & 17 & 19 & 20 & 22 & 23 & 25 & 26 & 28 & 29 \\
 Discrepancies:  &    \{ 2, 4 \} &    \{ 2 \} &    \{ 2 \} &     \{ 2, 4 \} &    \{ 2 \} &    \{ 2 \} &    \{ 2 \} &    \{ 2 \} &    \{ 2, 3, 4 \} \\
 Tower: &36 &  36 & 36 & 47 & 37 & 47 & 46 & 48 & 47\\
\bottomrule
\end{tabular}
\caption{Observed Discrepancies for Basic $\Z_3$-Towers, $r=4$}
\label{table:discrepanciesp3r4}
\end{table}

Similarly, Tables~\ref{table:discrepanciesp3r2} and \ref{table:discrepanciesp3r3} support Conjecture~\ref{conj:basicpowers} in that the formulas in the conjecture depend on the parity of $n$.  Note that it is not possible to have a discrepancy at level 2 in this situation as $m(r,3)>1$.  

\begin{table}[ht]
\centering
\begin{tabular}{c|c c c c c c c c c c c c c c}
\toprule
$d$ & 2 & 4 & 5 & 7 & 8 & 10  & 11 & 13 & 14 & 16  \\ 
Discrepancies: & $\emptyset$ & $\emptyset$  & $\emptyset$  & $\emptyset$  & $\emptyset$  & $\emptyset$  & 
$\emptyset$ & \{ 3 \} & \{ 3 \} & $\emptyset$   \\
\hline
$d$ & 17 & 19 & 20 & 22 & 23 & 25 & 26 & 28 & 29 \\
 Discrepancies:  & \{ 3 \} & \{ 3 \} & \{ 3 \} 
& \{ 3 \} & \{ 3 \} & \{ 3 \} & \{ 3, 4 \} & \{ 3 \} & $\emptyset $\\
\bottomrule
\end{tabular}
\caption{Observed Discrepancies for Basic $\Z_3$-Towers, $r=2$}
\label{table:discrepanciesp3r2}
\end{table}

\begin{table}[ht]
\centering
\begin{tabular}{c|c c c c c c c c c c c c c c}
\toprule
$d$ & 2 & 4 & 5 & 7 & 8 & 10  & 11 & 13 & 14 & 16  \\ 
Discrepancies: & $\emptyset$  & $\emptyset$  & $\{ 3 \}$ & $\emptyset$  & $\{ 3 \}$ & $\{ 3 \}$ & 
$\emptyset$  & $\emptyset$  & $\{ 3 \}$ & $\{ 3 \}$   \\
\hline
$d$ & 17 & 19 & 20 & 22 & 23 & 25 & 26 & 28 & 29 \\
 Discrepancies:    & $\{ 3 \}$ & $\{ 3 \}$ & $\{ 3 \}$ & 
$\{ 3 \}$ & $\{ 3 \}$ & $\{ 3 \}$ & $\{ 3 \}$ & $\{ 3 \}$ & $\{ 3 \}$\\
\bottomrule
\end{tabular}
\caption{Observed Discrepancies for Basic $\Z_3$-Towers, $r=3$}
\label{table:discrepanciesp3r3}
\end{table}


%

Since $m(5,3)=3$, we only consider the asymptotic behavior of $\dk{5}{\cT(n)}$ as it is not feasible to spot patterns with period $3$ using only five levels.  Out of all of the towers $\cT$ we analyzed, the maximum value of 
\begin{equation}
\left| \frac{\dk{5}{\cT(n)}}{\alpha(r,3)\cdot d \cdot 3^{2n}} - 1\right| 
\end{equation}
is less than $.0021$ for $n=3$, less than $.0013$ for $n=4$, and less than $.00024$ for $n=5$.  This supports Conjecture~\ref{conj:stableasymptotic} and that we have the correct main term in Conjecture~\ref{conj:basicpowers}.

\subsection{Other Characteristics}  For $p>3$, our computations are necessarily more limited in scope.
Nevertheless, we record several examples with $p>3$ below.


\begin{example} \label{ex:p5powers}
When $p =5$, we compute that $m(5,5) =2$ and $m(r,5)>2$ for $r =2,3,4$.  Thus we focus first on the case that $r=5$, where there is a hope of seeing periodic behavior with just four levels of a $\Z_5$-tower.  By eyeballing the towers $\cT_d : Fy - y = x^d$ with $d \leq 12$, it looks like $\lambda_{d,5} =0$ for $d < 7$ and $\lambda_{d,5} = 1/2$ for $7 \leq d \leq 12$.  Table~\ref{table:p5powers} shows that $\delta_{d,5}(\cT(n),\lambda_{d,5})$ appears to be periodic with period two (as expected) for these towers.  This again supports Conjecture~\ref{conj:basicpowers}.
 \begin{table}[ht]
\centering
\begin{tabular}{c|c c c c  || c | c  c c c }
Level: & 1 & 2 & 3 & 4 & Level: & 1 & 2 & 3 & 4\\ 
\toprule
$\delta_{3,5}(\cT(n),0)$ & -21/26 & -5/26 & -21/26 & -5/26  & $\delta_{8,5}(\cT(n),1/2)$ & 53/78 & -20/39 & 53/78 & -20/39 \\ 

$\delta_{4,5}(\cT(n),0)$ & -16/39 & -10/39 & -16/39 & -10/39  & $\delta_{9,5}(\cT(n),1/2)$ & 14/13 & -15/26 & 14/13 & -15/26 \\ 
 
$\delta_{6,5}(\cT(n),0)$ & 5/13 & -5/13 & 5/13 & -5/13 & $\delta_{11,5}(\cT(n),1/2)$ & 73/39 & -55/78 & 73/39 & -55/78 \\ 

$\delta_{7,5}(\cT(n),1/2)$ & 11/39 & -35/78 & 11/39 & -35/78 & $\delta_{12,5}(\cT(n),1/2)$ & 59/26 & -10/13 & 59/26 & -10/13 \\ 
\bottomrule
\end{tabular}
\caption{$\cT_d: Fy-y=[x^d]$ with $3 \leq d \leq 12$, four levels}
\label{table:p5powers}
\end{table}

For $r \in \{2,3,4\}$, we can only meaningfully investigate the leading term.  We computed
\[
\left| \frac{ \dk{r}{\cT(n)}}{\alpha(r,5) d 5^{2n}} -1 \right|
\]
for these towers: with $n=3$ the maximum value was less than $.000273$ (resp. $.000266$, $.0021$) when $r=2$ (resp. $r=3$, $r=4$).  For $n=4$, the maximum value was less than $6.4 \cdot 10^{-5}$ (resp. $6.4 \cdot 10^{-5}$, $9.5 \cdot 10^{-5}$) when $r=2$ (resp. $r=3$, $r=4$).
 \end{example}
 
 \begin{example} \label{ex:plargepowers}
When $p>5$, we were only able to compute with two levels.  We computed
\[
\left| \frac{\dk{r}{\cT(2)}}{\alpha(r,p) d p^4} - 1 \right|
\]
for a variety of basic $\Z_p$-towers with ramification invariant $d$.  
\begin{itemize}
    \item  When $p=7$, we analyzed the second level of around a thousand $\Z_7$-towers.  The above quantity was always less than $.02$ for $r \in \{2,3,4,5\}$.
    
    \item  When $p=11$, we analyzed the second level of around $650$ $\Z_{11}$-towers.  The above quantity was always less than $.003$ for $r \in \{2,3,4,5\}$.
    
    \item  When $p=13$, we analyzed the second level of eleven $\Z_{13}$-towers.  The above quantity was always less than $.0042$ for $r \in \{2,3,4,5\}$.
\end{itemize}
Again, this supports the formula for the leading term in Conjecture~\ref{conj:basicpowers}.
 \end{example}

\section{Beyond Basic Towers} \label{sec:beyondbasic}

In Sections~\ref{sec:basicanumber} and \ref{sec:basicgeneral}, we focused on basic $\Z_p$-towers due to their simplicity.
In this section, we provide computational evidence that Conjectures~\ref{conj:stableasymptotic}, \ref{conj:stableanumber}, and \ref{conj:stablespecific} hold for other monodromy stables towers, and provide evidence that Philosophy~\ref{philosophy} holds for non-monodromy stable towers.  

\subsection{Monodromy Stable Towers with the Same Ramification as Basic Towers}  
So far, we have focused on basic $\Z_p$-towers as they have a particularly simple description using Artin-Schreier-Witt theory.  Now we consider more complicated $\Z_p$-towers that are totally ramified over a single point and have the same ramification as a basic $\Z_p$-tower.  To do this, we pick basic $\Z_p$-towers $\cT_{\basic} : Fy - y = \sum_{i=1}^d  [c_i x^i]$, and consider the related $\Z_p$-towers 
\[
\cT_{\modded} : Fy - y = \sum_{i=1}^d  [c_i x^i] + \sum_{j=1}^{d-1} d_j p [x^j] = \sum_{i=1}^d (c_i x^i,0,0,\ldots)  + \sum_{j=1}^{d-1}(0,d_j x^{jp},0,\ldots)
\]
where we let $d_j $ be $0$ or $1$ at random when $p \nmid j$ (and $d_j=0$ when $p \mid j$).
The first level of these modified towers agree with that of the basic tower, while higher levels do not.  However, by Fact~\ref{fact:localasw} we know they have the same ramification breaks above infinity
(and are unramified elsewhere).

We did this extensively in characteristic $p=3$, picking around $100$ basic towers with ramification invariants up to $19$ and considering ten modifications of each.  We computed $\dk{r}{\cT(n)}$ for the first four levels of all of these towers and $1 \leq r \leq 10$.  The modified towers always supported Conjectures~\ref{conj:stableasymptotic}, \ref{conj:stableanumber}, and \ref{conj:stablespecific}.  In fact, we almost always found that $\dk{r}{\cT_{\basic}(n)} = \dk{r}{\cT_{\modded}(n)}$.  There were only some scattered examples where they differed, and only for $r=8$.


\subsection{Towers Ramified Over Multiple Points}

We now consider monodromy stable towers of curves which are totally ramified over multiple points.  Because of the multiple points of ramification, we cannot use the program described in Section~\ref{sec:computing}.
As it is quite slow to compute examples without this program, we content ourselves with a couple of examples with $a$-numbers in characteristic $p=3$ and a general result in characteristic $p=2$.

\begin{example}
Let $p=3$, and consider the towers over $\PP^1_{\FF_p}$ defined by the Artin-Schreier-Witt equations
\[
\cT: Fy - y = [x^5] + [x^{-5}] \quad \text{and} \quad \cT' : Fy - y = [x^7] + [x^{-5}].
\]
Using {\sc Magma}'s built-in functionality
for computing $a$-numbers, we can compute the $a$-numbers of the first four levels of these towers.  These are shown in Table~\ref{table:multiplep=3}, along with data for the the basic towers $\cT_5$ and $\cT_7$ given by $Fy- y = [x^5]$ and $Fy - y = [x^7]$ respectively.  We were unable to investigate higher levels as {\sc Magma}'s built-in  functionality for computing with the Cartier operator is much less efficient than the (inapplicable) methods of Section~\ref{sec:computing}; for example, computing $a(\cT'(4))$ took around 38 hours.  
(For reference, Table~\ref{table:runningtimes} gives a systematic comparisons of the running of time of our algorithm and {\sc Magma}'s default methods when they both apply.)


\begin{table}[ht]
\centering
\begin{tabular}{c|c c c c c }
$n=$ & 1 & 2 & 3 & 4 \\ 
\toprule
$g(\cT(n))$ & 10 & 100 & 910 & 8200  \\ 
$a(\cT(n))$ & 4 & 36 & 306 & 2736 \\
$g(\cT'(n))$ &12 & 120 & 1092 & 9840  \\
$a(\cT'(n))$ & 6 & 44 & 368& 3284\\
\\
$g(\cT_5(n))$ & 4 & 46 & 442 & 4060 \\
$a(\cT_5(n))$ & 2 & 19 & 154 & 1369 \\
$g(\cT_7(n))$ & 6 & 66 & 624 & 5700  \\
$a(\cT_7(n))$ & 4 & 25 & 214 & 1915\\
\bottomrule
\end{tabular}
\caption{Invariants of $\cT$, $\cT', \cT_5,$ and $\cT_7$, characteristic $3$, levels $1$--$4$}
\label{table:multiplep=3}
\end{table}

We have that $\alpha(3) = 1/24$.   
For $2 \leq n \leq 4$ notice that 
\begin{align*}
a(\cT) &= 5 \alpha(3) ( 3^{2n}-9) + 5 \alpha(3) ( 3^{2n}-9) + 6 = \frac{5}{12} 3^{2n} + \frac{9}{4} \\
a(\cT') &= 5 \alpha(3) (3^{2n}-9) + 7\alpha(3) (3^{2n}-9) + 8 = \frac{1}{2} 3^{2n} + \frac{7}{2}.
\end{align*}
These support Conjecture~\ref{conj:stableasymptotic} and Conjecture~\ref{conj:stableanumber}.  
Note that the $a$-numbers for $\cT$ and $\cT'$ are almost a ``sum'' of the $a$-numbers of the basic towers:\footnote{Note there is an isomorphism of $\cT_5$ with the tower $Fy-y=[x^{-5}]$ lying over the automorphism $x\mapsto x^{-1}$ of $\PP^1$.}
we see that for $2 \leq n \leq 4$
\[
a(\cT(n)) = a(\cT_5(n)) + a(\cT_5(n))-2 \quad \text{and} \quad a(\cT'(n)) = a(\cT_5(n)) + a(\cT_7(n)) .
\]
This supports Philosophy~\ref{philosophy:sum}, as each point of ramification makes a contribution to the $a$-number.
\end{example}

\begin{example} \label{igusap3}
Consider the Igusa tower $\Ig$ in characteristic three as in Example~\ref{ex:igusaproperties}.
 There are two supersingular points of $\Ig(1) \simeq X_1(5)_k$ (this uses that $p=3$), so the tower given by  $\cT(n) := \Ig(n+1)$ is totally ramified over two points and unramified elsewhere.  The ramification invariant at level $n$ above each of the points is $9^{n-1}-1$.  We know the genus is
\[
g(\Ig(n)) = 3 \cdot 3^{2(n-1)} - 4 \cdot 3^{n-1} +1.
\]

\begin{table}[ht]
\centering
\begin{tabular}{c|c c c }
$n=$ & 1 & 2 & 3  \\ 
\toprule
$g(\Ig(n))$ & 0 & 16 & 208 \\
$a(\Ig(n))$ & 0 & 8 & 80 \\
$\dk{2}{\Ig(n)}$ & 0 & 12 & 120 \\
$\dk{3}{\Ig(n)}$ & 0 & 14 & 144 \\
\bottomrule
\end{tabular}
\caption{Invariants of $\Ig(n)$, characteristic $3$,  $3$ levels}
\label{table:igusap3}
\end{table}

Table~\ref{table:igusap3} shows invariants of $\Ig(n)$ for those small values of $n$ where we could compute it.\footnote{The analogous computation of the $a$-number for level 4 (genus $2080$) ran for over 1005 hours, using 23 GB of memory, without completing.}  In particular, notice that it appears $a(\Ig(n))=  3^{2(n-1)} - 1$.
Using Lemma~\ref{lem:breaks}, we see that $n$th break in the lower numbering filtration of $\cT$ is $12 \cdot 3^{n-1} - 4$ above each of the ramified points.  Then as $12 \alpha(3) = 1/2$, Conjecture~\ref{conj:stableanumber} predicts the $a$-number of $\Ig(n) = \cT(n-1) $ will be 
\[
(12 + 12) \alpha(3) 3^{2(n-1)} + c = 3^{2(n-1)}+ c \text{ for } n \gg 0.
\]
Our data for $a$-numbers therefore supports Conjecture~\ref{conj:stableasymptotic} and Conjecture~\ref{conj:stableanumber}.  Likewise, it appears that
\[
\dk{2}{\Ig(n)} = \frac{3}{2}\cdot 3^{2(n-1)} - \frac{3}{2} 
\]
in line with Conjecture~\ref{conj:stableasymptotic} and Conjecture~\ref{conj:stablespecific}.  Similarly for the third power, 
Conjecture~\ref{conj:stableasymptotic} predicts that $\dk{3}{\Ig(n)}$ is asymptotically $9/5 \cdot 3^{2(n-1)}$.  We compute that
\[
 \frac{\dk{3}{\Ig(2)}}{ 9/5 \cdot 3^2} \approx .864, \quad \text{and} \quad \frac{\dk{3}{\Ig(3)}}{ 9/5 \cdot 3^4} \approx .988.
\]
(Note that $\frac{9}{5}3^{2(n-1)}- \frac{9}{5} =\dk{3}{\Ig(n)}$ for $n=1,3$, but for $n=2$ this expression isn't an integer.  But taking its floor gives  $\dk{3}{\Ig(2)}$.)
Again, these support Conjecture~\ref{conj:stableasymptotic} for monodromy stable towers with multiple points of ramification and reflect Philosophy~\ref{philosophy:sum}.

To compute with this Igusa tower, we worked with the universal elliptic curve over $X_1(5)$
 \[
 E : y^2 + (1 + t)xy + ty = x^3 + tx^2.
 \]
 (Obtaining this equation is a relatively standard calculation, for example carried out in \cite[\S2.2]{silverberg}.)
 To obtain the function field of $\Ig(n)$ we adjoin the kernel of the Verschiebung $V^{n} : E^{(p^n)} \to E$ to $\F_p(t)$.  We can obtain a formula for $V$ from the multiplication by 3 map on $E$, and then iterate it (with appropriate Frobenius twists on coefficients) to obtain a polynomial with the coordinates of $\ker V^n$ as roots.  Given this description of the function field of $\Ig(n)$, {\sc Magma} can compute a basis for the regular differentials on $\Ig(n)$ and a matrix for the Cartier operator with respect to this basis.
\end{example}


We can also \emph{prove} similar behavior for $a$-numbers in towers with multiple points of ramification happens in characteristic two under a technical hypothesis - see Corollary~\ref{cor:anumberformula}.  Again, each point of ramification makes a contribution.

\begin{example} \label{ex:p2failtechnical}
Let $p=2$, and consider the $\Z_2$-towers $\cT: F y - y = [x^ 3] + [x^{-3}] + [(x-1)^{-3}]$ and $\cT' : Fy - y = [x^ 3] + [x] + [x^{-1}] + [(x-1)^{-5}]$.  These are towers over the projective line ramified over $0$, $1$, and $\infty$.  We have that $d_0(\cT(n)) = d_\infty(\cT(n)) = d_1(\cT(n)) = d_0(\cT'(n)) = 2^{2n-1}+1$, that $d_1(\cT'(n)) = \frac{5}{3} (2^{2n-1}+1)$, and that $d_\infty(\cT'(n)) =   (2^{2n-1}+1)/3$.
Table~\ref{table:3pointtower} shows data for the first four levels of these towers.  (As there are multiple points of ramification, we can only use {\sc magma}'s slower generic methods, so look at fewer levels.)  The $a$-numbers satisfy the formula \eqref{eq:anumberformula} even though the technical hypothesis 
\[
    \sum_{Q \in S} (d_Q(\cT(n))-1)/2 \geq 2 g(\cT(n-1))-2,
\]
is not satisfied for $n=4$ (and similarly for $\cT'$).    

\begin{table}[ht]
\centering
\begin{tabular}{c|c c c c }
$n=$ & 1 & 2 & 3 & 4  \\ 
\toprule
$g(\cT(n))$ & 5 & 24 & 98 & 390 \\
$a(\cT(n))$ & 3 & 6 & 24 & 96 \\
$g(\cT'(n))$ & 5 & 24 & 98 & 390 \\
$a(\cT'(n))$ & 2 & 7 & 25 & 97 \\
\bottomrule
\end{tabular}
\caption{Tower over $\PP^1$ ramified at three points,  $p=2$}
\label{table:3pointtower}
\end{table}
\end{example}

\subsection{Towers Over Other Bases}

We now briefly discuss $\Z_p$-towers whose base is not the projective line.
A simple way to obtain such towers is to start with a $\Z_p$-tower $\cT$
over the projective line, and---for any fixed $m$---forget the first $m$ levels of the tower to
obtain a $\Z_p$-tower over $\cT(m)$. It is worth pointing out that our conjectures are compatible with this procedure:

\begin{lem}
Suppose $\cT$ is a monodromy stable $\Z_p$-tower totally ramified above a set $S$, and for a fixed integer $m \geq 1$ let $\cT'$ be the $\Z_p$-tower $\ldots \to \cT(m+1) \to \cT(m)$. Then $\cT$ satisfies Conjecture~\ref{conj:stableasymptotic} (resp. Conjecture~\ref{conj:stableanumber} or Conjecture~\ref{conj:stablespecific}) if and only if $\cT'$ does.
\end{lem}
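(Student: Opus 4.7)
The plan is to exploit the identity $\cT'(n)=\cT(n+m)$: once we know $\cT'$ is monodromy stable with explicitly related invariants, each of the three conjectures for $\cT$ translates into the corresponding conjecture for $\cT'$ by the substitution $n\mapsto n\pm m$.

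First I would check that $\cT'$ is monodromy stable iff $\cT$ is, and compute the new ramification invariants. Because $\cT$ is totally ramified above $S$, each $Q\in S$ has a unique preimage $Q'\in\cT(m)=\cT'(0)$, the cover $\cT(n+m)\to\cT(m)$ is totally ramified above $Q'$, and the branch locus $S'$ of $\cT'$ is in bijection with $S$. Since the local extension cut out by $\cT'(j)\to\cT'(j-1)$ above $Q'$ is the same as the one cut out by $\cT(j+m)\to\cT(j+m-1)$ above $Q$, we have $d_{Q'}(\cT'(j))=d_Q(\cT(j+m))$ for all $j\ge 1$. Using Lemma~\ref{lem:breaks}(\ref{break3}) to translate between lower and upper breaks, a short geometric-series calculation then shows that if $s_Q(\cT(n))=c_Q+d_Q p^{n-1}$ for $n\gg 0$, then $s_{Q'}(\cT'(n))=c'_{Q'}+p^{2m}d_Q\cdot p^{n-1}$ for $n\gg 0$ with an explicit $c'_{Q'}\in\Q$; the converse direction is the same computation run backwards.

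Setting $D:=\sum_{Q\in S}d_Q$ and $D':=\sum_{Q'\in S'}d'_{Q'}=p^{2m}D$, Conjecture~\ref{conj:stableasymptotic} for $\cT'$ follows from the identity
\[
\frac{\dk{r}{\cT'(n)}}{\alpha(r,p)D'p^{2n}}=\frac{\dk{r}{\cT(n+m)}}{\alpha(r,p)Dp^{2(n+m)}},
\]
and vice versa. For Conjecture~\ref{conj:stableanumber}, if $a(\cT(n))=a p^{2n}+bp^n+c$ for $n\gg 0$ then $a(\cT'(n))=(a p^{2m})p^{2n}+(bp^m)p^n+c$ for $n\gg 0$; the reverse implication comes from the substitution $n\mapsto n-m$. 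Conjecture~\ref{conj:stablespecific} is the same kind of bookkeeping: the substitution $n\mapsto n+m$ converts period-$M$ coefficients $a,b,c,\lambda$ for $\cT$ into period-$M$ coefficients $a'(n)=p^{2m}a(n+m)$, $b'(n)=p^m b(n+m)$, $\lambda'(n)=\lambda(n+m)$, $c'(n)=c(n+m)+m\lambda(n+m)$ for $\cT'$, with the expansion $\lambda(n+m)(n+m)=\lambda(n+m)\cdot n+m\lambda(n+m)$ contributing the new linear term and absorbing the extra constant $m\lambda(n+m)$ into $c'(n)$. The only substantive step is the ramification calculation in the second paragraph; everything else is routine substitution.
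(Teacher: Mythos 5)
Your proposal is correct and follows essentially the same route as the paper: identify the branch locus $S'$ with $S$, use the equality of lower ramification breaks $d_{Q'}(\cT'(n))=d_Q(\cT(n+m))$ together with Lemma~\ref{lem:breaks} to deduce that $\cT'$ is monodromy stable with $d_{Q'}=p^{2m}d_Q$, match the leading terms $\alpha(r,p)d_Qp^{2(n+m)}=\alpha(r,p)d_{Q'}p^{2n}$, and finish by re-indexing. Your bookkeeping for Conjectures~\ref{conj:stableanumber} and \ref{conj:stablespecific} is just a more explicit version of the paper's remark that the remaining terms involving $m$ are absorbed into the unspecified constants.
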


\begin{proof}
This is essentially \cite[Proposition 5.5]{kmmonodromy}, though we include a proof for the convenience of the reader.
Note that $\cT'$ is totally ramified ramified above a set $S'$ of points in $\cT(m)$, and for each $Q \in S$ there is a unique point $Q' \in S'$ lying above it.  As the Galois group of the tower $\cT'$ is a subgroup of the Galois group of the tower $\cT$, we can directly compare the lower numbering filtrations.  In particular, $d_Q(\cT(n+m)) = d_{Q'}(\cT'(n))$.  

Note that for a tower $T$ ramified over $Q$, having $s_Q(T(n)) = d p^{n-1} + c$ for $n\gg 0$ is equivalent to $s_Q(T(n+1)) - s_Q(T(n)) = d (p-1)p^{n-1}$ for $n\gg 0$.
Using Lemma~\ref{lem:breaks}, we compute 
\begin{align*}
    s_{Q'}(\cT'(n+1))) - s_{Q'}(\cT'(n)) & = ( d_{Q'}(\cT'(n+1)) - d_{Q'}(\cT'(n))) p^{-n} \\
    &=  ( d_{Q}(\cT(m+n+1)) - d_{Q}(\cT(m+n))) p^{-n} \\
    &= (s_{Q}(\cT(m+n+1)) - s_Q(\cT(m+n))) p^{m}
\end{align*}
Since $\cT$ is monodromy stable, there exists $d_Q \in \Q$ such that $s_Q(\cT(m+n+1)) - s_Q(\cT(m+n)) = d_Q (p-1) p^{n+m-1}$ for $n$ large enough.  Thus we see that $s_{Q'}(\cT'(n+1)) - s_{Q'}(\cT'(n)) = d_Q  p^{2m} (p-1) p^{n-1}$ for $n$ large enough and hence $\cT'$ is monodromy stable.  In particular, taking $d_{Q'} = d_Q p^{2m}$ we have
$s_Q(\cT'(n))= d_{Q'} p^{n-1} + c_{Q'}$ for $n \gg 0$.  

Now notice that
\[
\alpha(r,p) d_Q  p^{2(n+m)}= \alpha( r,p) d_{Q'} p^{2n};
\]
the left side is the contribution of $Q$ to the leading term of $\dk{r}{\cT(n+m)}$ predicted by Conjecture~\ref{conj:stableasymptotic}, and
the right side is the contribution of $Q'$ to $\dk{r}{\cT'(n)}$ predicted by Conjecture~\ref{conj:stableasymptotic}.  
Thus $\cT$ satisfies Conjecture~\ref{conj:stableasymptotic} if and only if $\cT'$ does. 

The implications for the other two conjectures follow from this and absorbing other terms involving $m$ into the unspecified constants. 
\end{proof}

Investigating examples that do {\em not} arise in the above manner
necessitates the use of
{\sc Magma}'s
native functionality for computing with function fields, 
rather than the program described in Section~\ref{sec:computing} (which only works for $\ZZ_p$-towers
over the projective line).  We must therefore limit ourselves to examples
with $p=2$ in levels $n\le 5$.

\begin{example} \label{ex:hyperellipticbasep2}
    Working in characteristic $p=2$ over $k=\F_2$, we consider the three $\Z_2$-towers over 
    hyperelliptic ($=$Artin--Schreier over $\PP^1_k$) curves given by
    \begin{align*}
        C_{1} &: y^2 - y = x-\frac{1}{x}-\frac{1}{x-1} \quad & & \cT_1 : Fz - z = [(x^2+x)y] \\
        C_{2} &: y^2 - y = x^3-\frac{1}{x} \quad & & \cT_2: Fz - z = [xy] \\
        C_{3} &: y^2 - y = x^5 \quad & & \cT_3: Fz - z = [y] 
    \end{align*}
    For $i=1,2,3$, the curve $C_i$ is a genus $2$ branched $\Z/2\Z$-cover of $\PP^1_k$,
    and $\cT_i$ is a $\Z_2$-tower, totally ramified over the unique point $Q_i$ on $C_i$ lying over $\infty$ on $\PP^1_k$,
    with (lower) ramification breaks 
    \begin{equation}
        d_{Q_i}(\cT_i(n))=5\cdot \frac{2^{2n - 1}+1}{3}\label{Tibreaks}
    \end{equation}    
        (indeed, in each case the
    function has an order $5$ pole at $Q_i$ and is regular elsewhere.)
    In particular,
    \begin{equation}
        g(\cT_i(n)) = \frac{5\cdot 2^{2n-1} - 1}{3} + 3\cdot 2^{n-1}\label{Tigenus}
    \end{equation}
    for $i=1,2,3$ and all $n\ge 1$.
    Note that $\cT_i$ is {\em not} a $\Z_2$-tower over $\PP^1_k$: 
    one can check (using {\sc Magma} or by hand\footnote{For example, if $\cT_3(1)\rightarrow \PP^1_k$ were Galois,
    there would be an automorphism $\sigma$ of $k(\cT_3(1))$ with $\sigma(y)=y+1$.  Then
    $w:=z+\sigma(z)$ would be an element of $k(\cT_3(1))$ satisfying $w^2+w+1=0$,
    which is impossible as $\F_2$ is algebraically closed in $k(\cT_3(1))$}) that the degree-4 cover $\cT_i(1) \rightarrow \PP^1_k$
    is not Galois for $i=2,3$, and is Galois with group $\Z/2\Z \times \Z/2\Z$ for $i=1$.
    The table below summarizes some basic data about the $C_i$.
    \begin{table}[ht]
\centering
    \begin{tabular}{c | c c c c}
    $C$ & $g(C)$ & $a^1(C)$ & $a^2(C)$ & \\ 
    \toprule
    $C_1$ & 2 & 0 & 0 & \\ 
    $C_2$ & 2 & 1 & 1 & \\ 
    $C_3$ & 2 & 1 & 2 & \\ 
    \end{tabular}
    \caption{Three genus $2$ hyperelliptic curves} \label{table:hyperellipticbasechar2}
\end{table}
    This represents all possible behaviors, as a result of Ekedahl \cite[Theorem 1.1]{Ekedahl}
    shows that if $V=0$ on $H^0(C,\Omega^1_{C/k})$ for a hyperelliptic curve $C$ over a perfect field
    $k$ of characteristic $p$, then $2g(C) \le p-1$ if $(g(C),p)\neq (1,2)$; in particular, there is 
    no genus $2$ hyperelliptic curve in characteristic $p=2$ with $a$-number $2$.


To investigate the behavior of $a^r(\cT_i(n))$ for $i=1,2,3$, we tabulate the differences
\[
   \delta_{5,r}(\cT_i(n),0)-c_r= a^r(\cT_i(n)) - (\alpha(r,2)\cdot 5\cdot 2^{2n} + c_r)
   \quad\text{where}\quad c_r:=\begin{cases} 1/3 & \text{if}\ r=3 \\ 2/3 & \text{otherwise} \end{cases}
\]
and $\alpha(r,2) = \frac{r}{6(r+3)}$ is as in Notation \ref{not:alpha}; the constant term $c_r$
was selected to render most of the table entries integral.
    \begin{table}[ht]
\centering
\resizebox{\linewidth}{!}{%
    \begin{tabular}{c|  c c c c c ||  c c c c c ||  c c c c c  }
     & \multicolumn{5}{c||}{$\delta_{5,r}(\cT_1(n),0)-c_r$} &  \multicolumn{5}{c||}{$\delta_{5,r}(\cT_2(n),0)-c_r$}   & \multicolumn{5}{c}{$\delta_{5,r}(\cT_3(n),0)-c_r$} \\
     \hline
         & $r=1$ & 2 & 3 & 4 & 5    & $r=1$ & 2 & 3 & 4 & 5   & $r=1$ & 2 & 3 & 4 & 5 \\ 
    \toprule
    $n=1$ & -1/2    &0   & 0 & -4/7 & -3/4 &-1/2  &  0 &   1 &10/7 &  5/4 & 3/2  &  4  &  4  & 24/7 & 13/4\\
    2 &         0    &0   & 1 & 5/7  &  1 &   0   & 1   & 1  &5/7   & 1 &   2   & 5   & 6  & 47/7   &  8\\
    3 &         0    &1   & 1 & -1/7  &  0 &   2  &  1  &  2 &20/7   & 4 &   2  &  3  &  5  & 48/7  & 10\\
    4 &         0    &0   & 1 & 3/7   & 0 &   2   & 2   & 2 &17/7    &6 &   2   & 5   & 5  & 45/7   & 10\\
    5 &         0    &1   & 1  &12/7  &  0 &   2  &  1   & 2& 12/7   & 6 &   2 &    4  &  5 & 47/7  & 10
 \end{tabular}
 }
 \caption{Differences $a^r(\cT_i(n)) - (\alpha(r,p)\cdot 5\cdot 2^{2n} + c_r)$}\label{Tideltas}
\end{table}

If $m(r,p)$ is as in Notation \ref{notation:mr}, we have $m(r,2)=1$ if $r=1,3,5$ while $m(2,2)=2$ and $m(4,2)=3$.
The facts that the $r=1,3,5$ columns in Table \ref{Tideltas} appear to stabilize to constant functions,
and the $r=2$ columns appear to be stabilizing to periodic functions with period $2$ 
(with possibly nonzero linear term $\lambda\cdot n$ when $i=2,3$),
support Conjectures \ref{conj:stableasymptotic}, \ref{conj:stableanumber}, and \ref{conj:stablespecific}, and suggest that a more precise variant of Conjecture 3.8 along the lines
of Conjecture \ref{conj:basicpowers} should be true in this context as well.
The $r=4$ columns appear to be less structured, but they are nonetheless consistent with 
Conjectures \ref{conj:stableasymptotic} and \ref{conj:stablespecific},
and a possible analogue of Conjecture \ref{conj:basicpowers}, which would
predict a period of $m(4,2)=3$ that large relative to the modest number ($n=5$) of levels we have been able to compute.
\end{example}

\begin{remark}
    We note that the $a$-number formula \eqref{eq:anumberformula} appears to hold for $\cT_1(n)$
    in all levels $n$, despite the fact that the technical hypothesis 
    \eqref{eq:hypothesis} does {\em not} hold: indeed, 
    writing $d_n:=d_{Q_i}(\cT_i(n))$ and $g_n:=g(\cT_i(n))$ (noting that these numbers are independent of $i$ by \eqref{Tibreaks}--\eqref{Tigenus}), we compute
    \[
        \frac{d_{n+1} - 1}{2} - (2g_n - 2) =
        \frac{5\cdot 2^{2n} + 1}{3} - \frac{5\cdot 2^{2n}-2}{3} -3\cdot 2^n + 2 = 
        3(1-2^n)
    \]
    which is negative for $n\ge 1$.  Note that as $\cT_1(0)$ is {\em ordinary},
    one knows {\em a-priori} that the $a$-number of $\cT_1(1)$ is given by \eqref{eq:anumberformula}
    due to Remark \ref{Voloch}. 
    For $i=2,3$, it appears that 
    \[
        a(\cT_i(n)) = a(\cT_1(n)) + 2
    \]
    for all $n$ (respectively all $n\ge 3$) when $i=3$ (respectively $i=2$);
    in particular, the formula \eqref{eq:anumberformula} appears to be off by 2 for these two towers when $n\ge 3$.
\end{remark}



\begin{example} \label{ex:hyperellipticbase}
Let $C$ be the hyperelliptic curve over $\F_3$ given by the equation $y^2 = x^5 +x^2 +  1 $.    The function $f = x y$ has a pole of order $7$ at the unique point $Q$ at infinity on $C$.  We obtain a $\Z_3$-tower $\cT$ with $\cT(0) =C$  from the Artin-Schreier-Witt equation $Fz - z = [f]$.  
({\sc Magma} computes that $ \Aut(\cT(1)) = S_3$, so $\cT(1)$ cannot be a Galois $\Z/3^m\Z$-cover of $\PP^1$ for any $m \geq 2$ and $C$ is the only curve for which $\cT(1)$ is a $\Z/3 \Z$-cover.  This shows the tower $\cT$ is not related to a $\Z_p$-tower over the projective line by adding an extra layer or modifying the base).
We can compute $d_Q(\cT(n))$ using Fact~\ref{fact:localasw} and check the tower is monodromy stable.  Table~\ref{table:hyperellipticbase} shows some invariants of $\cT(n)$ for small $n$.

\begin{table}[ht]
\centering
\begin{tabular}{c|c c c c c c c }
Level &  $\dk{1}{\cT(n)}$ &$\dk{2}{\cT(n)}$ & $\dk{3}{\cT(n)}$& $\dk{4}{\cT(n)}$ &$\dk{5}{\cT(n)}$ \\ 
\toprule
1   & 3 &5 & 7 & 8 & 9  \\ 
2   &  24 & 38 & 47 & 51 & 53  \\ 
3   & 213 & 321 & 386 & 430 & 464  \\ 
4   & 1914 & 2874 & 3450 & 3832 & 4107 \\ 
\bottomrule
\end{tabular}
\caption{The tower $Fz - z = [xy]$ over $C: y^2 = x^5 +x^2 +1$} \label{table:hyperellipticbase}
\end{table}
This supports Conjecture~\ref{conj:stableanumber} and Conjecture~\ref{conj:stablespecific}, and exhibits very similar behavior to basic towers with ramification invariant $7$ like the one in Example~\ref{example:p3d7}.  In fact, the $a$-numbers look exactly the same while $\dk{r}{\cT(n)}$ for $r>1$ exhibits slight variation.  For example, note that $7 \alpha(4,3) = 7/12$ and that for $n=3$ and $n=4$ we have
\[
\dk{4}{\cT(n)} = \frac{7}{12} \left( 3^{2n} - 9 \right) + 10.
\]
\end{example}

\subsection{Towers with Periodically Stable Monodromy} \label{ss:periodic}
 We next compute examples with towers $\cT$ which are not monodromy stable.  While the behavior of $\dk{r}{\cT(n)}$ in these examples does not exactly match that of monodromy stable towers as predicted by Conjectures~\ref{conj:stableanumber} and \ref{conj:stablespecific}, it nevertheless appears to be quite structured, in line with Philosophy~\ref{philosophy}.
 We consider some selected towers with periodically stable monodromy, though we do not attempt to explore this situation exhaustively.

\begin{example} \label{ex:periodicmonodromy}
Let $p=2$ and $d$ be odd. 
We consider the tower
\begin{equation}
\cT_d: Fy - y = [x^d] + \sum_{i=1}^\infty p^{2i-1} [x^{(d+2) 2^i -2}] = [x^d] + p [x^{2d+3 }] + p^3 [x^{8d+15}]+\cdots
\end{equation}
This is ramified only over infinity.  
Using Fact~\ref{fact:localasw} and Lemma~\ref{lem:breaks}, we see that $u(\cT_d(n)) = s_n +1$ and $s(\cT_d(n)) = s_n$, where $s_{n+1} = (d+2) 2^n -2$ if $n$ is even, and $s_{n+1}=(d+2) 2^n -1$ if $n$ is odd.
(Note that for $n$ odd, the value of $u(\cT_d(n))$ comes from the term $s_{n-1} p^{n-1-(n-2)} = p s_{n-1}$ appearing in the maximum in Fact~\ref{fact:localasw}.)

A straightforward computation with the Riemann--Hurwitz genus formula yields
\[
g(\cT_d(n)) 
=\frac{d+2}{2}\left(\frac{2^{2n}-1}{3}\right)- \left(\frac{5}{2}+\frac{(-1)^{n-1}}{6}\right)2^{n-1} +\frac{7}{6}
= \frac{d+1}{2}\left(\frac{2^{2n}-1}{3}\right)+ 2^{n-1}\left\lfloor\frac{2^n-7}{3}\right\rfloor +1,
\]
the second expression being visibly an integer.
Using Corollary~\ref{cor:anumberformula}, we will be able to prove that
\begin{equation}
      a(\cT_d(n)) = \frac{d+2}{4}\left(\frac{2^{2n-1}+1}{3}\right)+\frac{1}{4}\frac{(-2)^n-1}{3}+\frac{(-1)^{\frac{d+1}{2}}-1}{4},
      \label{eq:anummonodromyper}
\end{equation}
valid for all $n$ and $d$.  Note that to fit into the framework of our previous conjectures where the $a$-number is a quadratic polynomial in $2^n$, we would need the polynomial to depend on whether $n$ is even or odd because of the presence of the $(-2)^n$ term.  For monodromy stable towers, Conjecture~\ref{conj:stableanumber} predicts the $a$-number is given by a single formula and does not exhibit periodic behavior.  But as the monodromy in this tower has period two, it is not surprising that the formula \eqref{eq:anummonodromyper} for the $a$-number  depends on the parity of $n$; indeed, this is already the case for the genus.

Based on these formulae, and following the lead of Conjecture \ref{conj:stableasymptotic}, we are led to guess the asymptotic
formula
\[
    \lim_{n\rightarrow \infty} \frac{\dk{r}{\cT_d(n)}}{\alpha(r,2)(d+2)2^{2n}}
    = \lim_{n\rightarrow \infty} \frac{\dk{r}{\cT_d(n)}}{\frac{r}{(r+3)}\left(\frac{d+2}{6}\right)2^{2n}} = 1
\]
with $\alpha(r,p)$ from Notation \ref{not:alpha}.
This visibly holds for $r=1$.  For $2\le r \le 8$, odd $d$ with $3\le d \le 35$ and all $2 < n\le 7$,
we computed that
\begin{equation}
    \left|\frac{\dk{r}{\cT_d(n)}}{\frac{r}{(r+3)}\left(\frac{d+2}{6}\right)2^{2n}} - 1\right| < 2^{-n}.
    \label{eq:asympdeltapermonodromy}
\end{equation}
This is consistent with the existence of a secondary term of the form $c(n)\cdot 2^n$ for some periodic function $c:\ZZ/m\ZZ\rightarrow \QQ$.

In light of the above, it is tempting to believe that Conjecture \ref{conj:stablespecific}
may hold for arbitrary monodromy periodic towers.  Direct evidence for this
is somewhat elusive due to the presence of the secondary term of order $2^n$ and the fact that our computations are limited to small values of $n$ and $p$.  Furthermore, we have only computed with the single tower $\cT_d$ for each value of $d$
rather than looking at multiple towers with the same limiting ramification breaks.
The behavior is clearest for $r=5$, where our computations support the exact formula
\[
   \dk{5}{\cT_d(n)} =  \frac{d+2}{4}\left(\frac{5\cdot2^{2n-2}+1}{3}\right)+\frac{1}{4}\frac{(-2)^{n-1}-1}{3}+\frac{(-1)^{\frac{d+1}{2}}-1}{4},
\]
valid for all odd $d$ with $3\le d \le 35$ and all $3\le n\le 7$, with the sole exception of $(d,n)=(3,3)$.
In fact, further computation shows that this formula holds as well for all odd $d$ in the range $3\le d \le 161$
and all $3\le n\le 6$, again with the sole exception $(d,n)=(3,3)$.

Unfortunately, we were unable to find similar exact formulae for other values of $r$ that are uniform in
$d$ and $n$ for $n$ sufficiently large.
Nonetheless, our limited computations 
support that something like Conjecture \ref{conj:stablespecific} should hold
for any monodromy periodic tower, though the periodic ``coefficient'' functions occurring
therein would appear to have a rather complicated and subtle dependence on $d_Q$ and the function $c_Q(n)$
giving the (periodic) upper ramification breaks as in Definition \ref{defn:types}.




\end{example}

\subsection{Towers with Faster Genus Growth} \label{ss:fastermonodromy}

Next we collect data and make observations on a few towers with much faster genus growth.
In order to be in a situation where we can have some hope of identifying patterns, we limit
ourselves to cases where the genus growth is sufficiently regular.  The rapid growth of
the genus in these examples limits our computations to characteristic $p=2$ and levels $n\le 6$.

\begin{example}
    Let $d$ be a positive odd integer.
    In characteristic $p=2$, we consider the towers
    \begin{equation*}
        \cT_d : Fy - y = \sum_{n\ge 0} 2^n [x^{s_n}]
    \end{equation*}
    with $s_n:=(d+2)\cdot2^{2n+1}-1$.  Note that $s_\infty(\cT_d(n)) = s_{n-1}$ for $n\ge 1$.   
    Using the Riemann--Hurwitz formula in the form of Lemma~\ref{lem:genuslower}, we have
    \begin{equation*}
        g(\cT_d(n)) = \frac{(d+2)}{7}(2^{3n}-1)-2^n+1.
    \end{equation*}
    We computed $\dk{r}{\cT_d(n)}$ for $r\le 10$ and $n\le 6$, with $d\in \{1,3,5,7,9\}$.
    Inspired by Conjecture~\ref{conj:stableasymptotic} and Corollary~\ref{cor:ratio}, we
    first analyzed the ratio 
    \begin{equation}
        \frac{\dk{r}{\cT_d(n)}}{\frac{(d+2)}{7}\cdot 2^{3n}}\label{eq:fastmonanum}
    \end{equation}
    for each value of $r$ and $d$, and all $n\le 6$.
    For each fixed $r$, this ratio appears to stabilize as $n$ increases, to a value that appears not to depend on $d$.
    Working in level $n=6$ and truncating the resulting limiting values to 6 decimal places, we 
    then used continued fraction expansions to find best possible rational approximations with comparatively small denominator,
    which leads to the following guesses for the main term of $\dk{r}{\cT_d(n)}$.
    
    \begin{definition}
        For $r\le 10$,  define $$\nu_{d,r}(n):=\left\lfloor\frac{d+2}{7} \alpha(r) 2^{3n}\right\rfloor,$$
        where $\alpha(r)$ is given in Table~\ref{table:fastalpha}.
        \begin{table}[ht]
    \centering
    \begin{tabular}{c|c c c c c c c c c c }
        $r$ &  1 & 2 & 3 & 4 & 5 & 6 & 7 & 8 & 9 & 10\\
        \toprule
        $\alpha(r)$ & $\frac{3}{8}$  & $\frac{37}{64}$    &  $\frac{91}{128}$  & $\frac{63}{80}$  &  $\frac{75}{89}$  & $\frac{78}{89}$  & $\frac{64}{71}$  & $\frac{169}{184}$ & $\frac{41}{44}$ & $\frac{81}{86}$    \\
\bottomrule
\end{tabular}
\caption{Leading constants for $\cT_d$} \label{table:fastalpha}
\end{table}    
    \end{definition}
    
    Thanks to Corollary \ref{cor:anumberformula} below, we have the {\em exact formula}
        \[
            a(\cT_d(n)) = \frac{3}{8} \cdot \frac{d+2}{7}\cdot 2^{3n} + \frac{d-5}{14} = \frac{d+2}{2}\left(\frac{3\cdot 2^{3n -2}+1}{7}\right)-\frac{1}{2}
        \]
        for all $n$ and $d$, which provides a proof that the ratio \eqref{eq:fastmonanum} tends to $3/8$ as $n\rightarrow \infty$.
        In a similar spirit, we compute that
        \begin{equation}
            \dk{2}{\cT_d(n+1)} - 8 \dk{2}{\cT_d(n)} = 3 \frac{5-d}{2}+7\left\lfloor\frac{d-1}{8}\right\rfloor\label{eq:dk2}
        \end{equation}
        for all $d \in \{1,3,5,7,9\}$ and $2 \leq n \leq 5$, and
        \begin{equation}
            \dk{3}{\cT_d(n+1)} - 8 \dk{3}{\cT_d(n)} =\begin{cases} 7 & 3 \leq n \leq 5 \\ 3 & n=2\end{cases} \label{eq:dk3}
        \end{equation}
        for all $d \in \{1,3,5,7,9\}$.  These lead to the conjectural exact formul\ae
        \[
            \dk{2}{\cT_d(n)} = \frac{37}{64}\cdot \frac{d+2}{7} \cdot 2^{3n} +3\frac{d-5}{14} -\left\lfloor\frac{d-1}{8}\right\rfloor= \frac{d+2}{8}\left(\frac{37\cdot 2^{3(n -1)}+5}{7}\right)-1 +\frac{d\bmod 8 - 2}{8}
        \]
        for $n\ge 2$ (where $d\bmod 8$ denotes the least nonnegative residue of $d$ modulo $8$) and
        \[
            \dk{3}{\cT_d(n)} = \frac{91}{128}\cdot \frac{d+2}{7} \cdot 2^{3n} -1 = 13(d+2)2^{3n-7} - 1
        \]
        for $n\ge 3$.  These conjectural formul{\ae} support the given values of $\alpha(r)$ for $r=2,3$;
        note that these values of $\alpha$ arise naturally from \eqref{eq:dk2}--\eqref{eq:dk3} and the computed values of $a^r(\cT_d(n))$ for $n=2$.
         Thus, it is perhaps unsurprising that the values of $\alpha$
        do not appear to satisfy a simple formula.
        
        \begin{remark}
            The special formulae
            \begin{equation*}
                 \dk{2}{\cT_d(1)} = \frac{3d+(-1)^{(d-1)/2}}{4}+1 \quad \text{and} \quad \dk{3}{\cT_d(2)} = 6(d+2)+\frac{d+1}{2} 
            \end{equation*}
            hold for all (positive, odd) values of $d$ less than $30$ in levels 1 and 2, respectively.
            However, we could not discern any pattern for the values of $\dk{3}{\cT_d(1)}$ in level 1.
        \end{remark}

        For  $r > 3$, the differences $\dk{r}{\cT_d(n+1)} - 8 \dk{r}{\cT_d(n)}$
        are comparatively small, but do not seem to obey any obvious pattern.
        Nonetheless, for all values of $r$, $d$, and $n$ for which we computed $\dk{r}{\cT_d(n)}$,
        this integer is remarkably close to $\nu_{d,r}(n)$, and we tabulate the differences 
        $\dk{r}{\cT_d(n)} - \nu_{d,r}(n)$ for $4 \le r \le 10$, $d\in \{1,3,5,7,9,11\}$, and $1\le n\le 6$ in Table~\ref{table:deviations}.
        
    \begin{table}[!htb]
    \caption{$\dk{r}{\cT_d(n)} - \nu_{d,r}(n)$} \label{table:deviations}
    \begin{subtable}{.5\linewidth}
      \caption{d=1}
      \centering
     \centering
    \begin{tabular}{c|c c c c c c c c}
        \diag{.1em}{1ex}{$n$}{$r$} &  $4$ &$5$ & $6$& $7$ &$8$ &$9$ & $10$ \\ 
        \toprule
        1 &  0 &  0 & -1 &-1 & -1 &  -1 & -1\\
        2 & 0  & 0  & 0  & 0 & -1 & -1  &-1\\
        3 & 0  &-1  &-2  &-2 & -3 & -3  &-2\\
        4 & 0  &-3  &-3  &-3 & -1 & -3  &-6\\
        5 & -1  &-3  &-4  &-3 & -4 & -7  &-4\\
        6 &  0  &-4 & -3  &-3 & -7 &-12  &-14\\
    \bottomrule
    \end{tabular}
    \end{subtable}%
    \begin{subtable}{.5\linewidth}
      \centering
        \caption{d=3}
     \begin{tabular}{c|c c c c c c c c}
        \diag{.1em}{1ex}{$n$}{$r$} &  $4$ &$5$ & $6$& $7$ &$8$ &$9$ & $10$ \\ 
        \toprule
        1&  0 &  0 & -1 & -1 & -1 & -1 & -1\\
        2&  0 &  2 &  2 &  1 &  1 &  0 & -1\\
        3& -1 &  0 &  1 & -1 & -2 & -2 & -1\\
        4& -1 &  0 &  0 &  1 &  4 & -1 & -5\\
        5& -1 & -2 &  1 & -1 &  1 & -4 &  3\\
        6&  0 & -3 &  2 &  2 & -2 & -7 &-12\\
    \bottomrule
    \end{tabular}
    \end{subtable}%
    \vspace{1cm}
    \begin{subtable}{.5\linewidth}
      \centering
        \caption{$d=5$}
        \begin{tabular}{c|c c c c c c c c}
        \diag{.1em}{1ex}{$n$}{$r$} &  $4$ &$5$ & $6$& $7$ &$8$ &$9$ & $10$ \\ 
        \toprule
      1 & 0 & 0 & -1 & -1 & -1 & -1 & -1  \\ 
    2 & 0 & 1 & 1 & 2 & 2 & 1 & 0  \\ 
    3 & 0 & -1 & 1 & -1 & -2 & -2 & 0  \\ 
    4 & 1 & -2 & 0 & 1 & 4 & 2 & -3  \\ 
    5 & 1 & -3 & 0 & -1 & -1 & -2 & 6  \\ 
    6 & 2 & -3 & 4 & 4 & -6 & -7 & -16  \\ 
    \bottomrule
    \end{tabular}
    \end{subtable}%
 \begin{subtable}{.5\linewidth}
      \centering
        \caption{$d=7$}
        \begin{tabular}{c|c c c c c c c c}
        \diag{.1em}{1ex}{$n$}{$r$} &  $4$ &$5$ & $6$& $7$ &$8$ &$9$ & $10$ \\ 
        \toprule
     1 & 0 & 0 & -1 & -1 & -1 & -1 & -1  \\ 
2 & 1 & 2 & 4 & 4 & 3 & 2 & 1  \\ 
3 & 1 & 2 & 4 & 0 & -1 & -1 & 1  \\ 
4 & 2 & 0 & 1 & 2 & 11 & 5 & -2  \\ 
5 & 3 & -2 & 3 & 3 & 8 & 2 & 16  \\ 
6 & 5 & -1 & 9 & 11 & 7 & -3 & -5  \\ 
    \bottomrule
    \end{tabular}
    \end{subtable}%
   \vspace{1cm}
    \begin{subtable}{.5\linewidth}
      \centering
        \caption{$d=9$}
        \begin{tabular}{c|c c c c c c c c}
        \diag{.1em}{1ex}{$n$}{$r$} &  $4$ &$5$ & $6$& $7$ &$8$ &$9$ & $10$ \\ 
        \toprule
1 & 1 & 0 & -1 & -1 & -1 & -1 & -1  \\ 
2 & 1 & 2 & 1 & 2 & 2 & 2 & 2  \\ 
3 & 1 & 0 & 1 & 2 & 0 & 0 & 2  \\ 
4 & 3 & 1 & 2 & 4 & 5 & 3 & 0  \\ 
5 & 2 & -1 & 4 & 6 & 5 & 2 & 2  \\ 
6 & 4 & 1 & 11 & 12 & 6 & -2 & -18  \\ 
    \bottomrule
    \end{tabular}
    \end{subtable}%
 \begin{subtable}{.5\linewidth}
      \centering
        \caption{$d=11$}
        \begin{tabular}{c|c c c c c c c c}
        \diag{.1em}{1ex}{$n$}{$r$} &  $4$ &$5$ & $6$& $7$ &$8$ &$9$ & $10$ \\ 
        \toprule
1 & 1 & 0 & -1 & -1 & -1 & -1 & -1  \\ 
2 & 1 & 2 & 4 & 4 & 3 & 3 & 3  \\ 
3 & 2 & 2 & 3 & 2 & 0 & 0 & 2  \\ 
4 & 4 & 2 & 4 & 6 & 12 & 8 & 2  \\ 
5 & 4 & 0 & 7 & 5 & 8 & 7 & 17  \\ 
6 & 7 & 2 & 18 & 17 & 10 & 4 & -13  \\ 
    \bottomrule
    \end{tabular}
    \end{subtable}
\end{table}





\end{example}

\subsection{Non-\texorpdfstring{$\Z_p$}{Zp}-towers} 
In contrast, we now give an example of a sequence of Artin-Schreier covers in characteristic three which are not part of a $\Z_3$-tower but have the same ramification as we would see in a basic $\Z_p$-tower. 

\begin{example} \label{ex:nottower}
Take $p=3$ and $k= \F_p$.  Let $C_0 = \PP^1_{k}$ with function field $k(x)$, and for $1 \leq i \leq 4$ construct the curve $C_i$ 
by adjoining a root of $y_i^3 - y_i = f_i$ to $k(C_{i-1})$, where $f_1 = x^7$, $f_2 = x^{14} y_1$, $f_3 = x^{42}y_2$, and $f_4 = x^{126} y_3$.  
It is straightforward to verify that $C_i \to C_{i-1}$ is totally ramified over the point above infinity, with ramification invariants $7, 49, 427, 3829$.  This is the same sequence of layer-by-layer ramification breaks and genera as for basic $\Z_3$-towers with ramification invariant $7$, which we looked at in Example~\ref{example:p3d7}.  However, these curves do not form the first 4 layers of a $\Z_3$-tower---one can check (using {\sc Magma}, or directly) that $k(C_2)$ is not normal over $k(x)$.
Similarly define $C'_i$ for $1\leq i \leq 4$ using $f'_1 = x^7$, $f'_2 = x^{14}y_1 + x^2$, $f'_3 = (x^{42} + x^{20}) y_2 + x y_1$, and $f'_4 = x^{126} y_3 + x^5 y_2 + x^2$, where we have added some lower-order terms to the Artin-Schreier equations for $C_i$.
Table~\ref{table:nottower} records invariants of these sequences of curves.  
\begin{table}[ht]
\centering
\begin{tabular}{c|c c c c  || c | c c c c}
& 1 & 2 & 3 & 4 & & 1 & 2 & 3 & 4 \\ 
\toprule
$a(C_n)$ & 4  & 27  & 231  & 2057  & $a(C'_n)$ & 4 &27 &231 & 2025\\
$\dk{2}{C_n}$ & 5 & 39 & 364 & 3329 & $\dk{2}{C_n}$ & 5 & 39 & 353 & 3079 \\
$\dk{3}{C_n}$ & 6 & 49 & 442 & 4113 & $\dk{3}{C_n}$ & 6 & 49 & 429 & 3806\\
$\dk{4}{C_n}$ & 6 & 54 & 490 & 4550 & $\dk{4}{C_n}$ & 6 & 54& 479 & 4305 \\
\bottomrule
\end{tabular}
\caption{Invariants of $C_n$ and $C'_n$ for $1 \leq n \leq 4$}
\label{table:nottower}
\end{table}

They are much less structured that what we have seen for $\Z_p$-towers.  For example, based on the first three curves it would be reasonable to guess that
\[
a(C_n) = a(C'_n) = \frac{5}{16} 3^{2n} + \frac{1}{12} 3^n + \frac{15}{16}
\]
as this holds for $n =1,2,3$ and the proposed formula has relatively small denominators.  This would predict that the $a$-number of $C_4$ and $C'_4$ would be $2058$.  While $a(C_4)$ is close, $a(C'_4)$ is considerably different.  Higher powers of the Cartier operator display similar irregularity.  Furthermore, $\frac{a(C_4)}{g(C_4)}$ and $\frac{a(C'_4)}{g(C_4)}$ are around $.361$ and $.355$ respectively, while this ratio is much closer to $\frac{1}{3}$ for basic $\Z_3$-towers having similar ramification.  
This suggests that it is crucial that a sequence of Artin-Schreier covers actually form a $\Z_p$-tower in order to 
obtain exact (or even just asymptotic) formulae for $\dk{r}{C_n}$ when $n\gg 0$.

It is also curious that this ratio is not particularly close to $\frac{1}{2}(1-p^{-1})(1-p^{-2}) = 8/27  \approx 0.296$, the na\"{i}ve guess articulated below equation \eqref{naiveguess}.  Possibly that guess is wrong.  On the other hand, these are examples in which $n=4$ and it would be reasonable to expect noise on the order of $3^{-4} \approx 0.0123$.  Furthermore, we chose the $\Z/3\Z$-covers so they could be described by Artin-Schreier equations with relatively few terms, which is not the generic behavior of random covers.  
\end{example}


\section{Computing with the Cartier Operator in Towers} \label{sec:computing}

Let $k$ be a finite field of characteristic $p$ and $\cT$ a $\Z_p$-tower over $k$.  Suppose
the base of the tower is the projective line (i.e.  $\cT(0) = \PP^1_k$) and that $\cT$ is totally ramified over infinity and unramified elsewhere.  
In this section, we describe how to compute efficiently with the Cartier operator on the space of regular differentials on $\cT(n)$.  The key difficulty is that $g(\cT(n))$ grows at least exponentially in $n$ (see Remark~\ref{remark:lowergrowth}), so the dimension of the space of regular differentials on $\cT(n)$ quickly becomes intractably large as $n$ increases.  In order to compute with enough levels of $\cT$ to have any hope of systematically investigating Philosophy~\ref{philosophy}, we must be as efficient as possible.

The {\sc Magma} computer algebra system has extensive, robust algorithms for function fields; in particular, it has the ability  
to compute with Witt vectors and Artin--Schreier--Witt extensions in characteristic $p$,
and to compute a matrix representation of the Cartier operator on the space of regular differentials on the 
smooth projective curve associated to any function field over $k$.
Unfortunately, these algorithms are not efficient enough to compute beyond the first few levels in a $\Z_p$-tower,
which severely limits the computational support they can provide for our conjectures.
For example, Table~\ref{table:runningtimes} shows the time needed to compute the $a$-number of the first few levels of the basic $\Z_3$-tower $\cT_{\timing} : Fy - y = [x^7] + [x^5]$ in characteristic $3$ using {\sc Magma}'s default methods and using our more efficient methods.\footnote{These were run on a virtual server at the University of Canterbury equivalent to an Intel Core Processor (Skylake) CPU at 2600 MHz with 67,036 MB of RAM.}  (These computations also use substantial amounts of memory.  For example, our method used several gigabytes of memory for the fifth level.)








 \begin{table}[hb]
    \centering
    \begin{tabular}{c|c c}
        Level & {\sc Magma} & Our Method \\ 
        \toprule
         2& $.08$ seconds & $.01$ seconds \\
         3& $12$ seconds & $.16$ seconds \\
         4& $48$ hours & $25$ seconds \\
         5&  & $7$ hours\\
\bottomrule
\end{tabular}
\caption{Approximate Running Times to Compute $a(\cT(n))$ for the $\Z_3$-tower $\cT_{\timing} : Fy -y = [x^7] + [x^5]$} \label{table:runningtimes}
\end{table}


We have implemented our algorithm in {\sc Magma} \cite{bcgit} in order to build on {\sc Magma}'s support of efficient computations with polynomials and efficient linear algebra over finite fields.
The key improvement
is incorporating theoretic information about the space of regular differentials in a $\Z_p$-tower.  An overview of our algorithm is as follows.

\begin{enumerate}
    \item  Perform computations with Witt vectors to turn a description of a $\Z_p$-tower using Artin-Schreier-Witt theory into a description of the tower as a sequence of $\Z/p\Z$-covers.  This uses techniques and functions developed by Lu\'{i}s Finotti for performing computations with Witt vectors \cite{finotti14,finottigithub} which are substantially faster than the native Witt vector algorithms of {\sc Magma}. 
    
    \item  Rewrite the sequence of Artin-Schreier $\Z/p\Z$-covers describing the tower in a standard form (discussed in Section~\ref{ss:standardform}).
    
    \item  Using results of \cite{madden} for $\Z/p^n\Z$-covers of $\PP^1_k$ in the above standard form, 
    write down an explicit basis of regular differentials on $\cT(n)$; see Section~\ref{ss:maden}.
    
    \item  Recursively compute the images under the Cartier operator of a subset of this basis which suffices to compute the image of any differential using semilinearity. 
    
    \item  Build a matrix representing the Cartier operator and compute the kernel of its $r$th power to obtain $\dk{r}{\cT(n)}$.
    
\end{enumerate}

In the remainder of this section we explain these steps in detail.

\begin{remark}
We will not present a careful asymptotic analysis of the running time of the algorithm because it is still exponential in $n$ like {\sc Magma}'s default functionality.  Roughly speaking, the algorithm is polynomial time in the genus of $\cT(n)$; this is very bad as $g(\cT(n))$ grows exponentially in $n$.  Therefore this can only be practical for small $n$.  (For a basic $\Z_p$-tower with ramification invariant $d$, Lemma~\ref{lem:basicinvariants} says that $g(\cT(n)) = \Theta(d p^{2n})$). 
The advantage of our algorithm over the default methods available in {\sc Magma} is that it is much faster in practice, allowing us to compute further with basic towers and provide much stronger evidence for our conjectures.
\end{remark}

\begin{remark}
For $p>2$, the bottleneck in our algorithm is usually the computations with the Cartier operator in Step 4.  Writing down the tower in standard form and carrying out the linear algebra are substantially faster than carrying out the computations with the Cartier operator for the fifth level.

When $p=2$, other parts of the computation are the bottleneck.  For example, we are able to compute with the seventh level of many $\Z_2$-towers (and sometimes the eighth level for very simple towers like the one in Example~\ref{ex:p2longperiod}.  In these examples, writing a basic $\Z_2$-tower in standard form (Step 2) is often the bottleneck, or occasionally the computations with Witt vectors (Step 1).  For towers with faster growth like in Section~\ref{ss:fastermonodromy} in characteristic two, the linear algebra computations (Step 5) are actually the bottleneck.
\end{remark}

\begin{remark}
When {\sc Magma} does a similar computation, most of the time is spent finding a basis of the regular differentials on $\cT(n)$.  After {\sc Magma} has pre-computed a basis for the regular differentials on $\cT(n)$ (for example, in the course of computing the genus), {\sc Magma} can 
produce a matrix representing the Cartier operator and compute $\dk{r}{\cT(n)}$ quickly relative to the time spent finding the basis.
\end{remark}

\begin{remark}
Our program assumes that the base of the $\Z_p$-tower is the projective line and that the tower is totally ramified over infinity and unramified elsewhere.  The first assumption is essential to present the tower in standard form and use the results of  \cite{madden} to write down a basis of regular differentials.  If the tower has a different base, it may not be possible to write it in standard form (see Section~\ref{ss:standardform}).  

The second assumption, that there is only one point of ramification, is not essential but makes many of the computations simpler.  In particular, it allows representing differentials as polynomials instead of rational functions.  As the bulk of computation time is spent performing computations with these polynomials, this is an important simplification.
\end{remark}

\subsection{Computations with Witt Vectors}  The polynomials defining addition (and multiplication) for the length-$n$ Witt vectors  become increasingly complicated as $n$ increases.  Such computations are necessary to convert
the Artin-Schreier-Witt description of a $\Z_p$-tower as in \S\ref{sec:towers}
into explicit Artin-Schreier equations for each layer of the tower as a $\Z/p\Z$-cover of the previous layer.
{\sc Magma} has the functionality to compute with Witt vectors, but we use the methods of Lu\'{i}s Finotti \cite{finotti14,finottigithub} which are more efficient.  The calculations with Witt vectors are rarely the limiting factor in our computations---for example, these computations (using either method) for the fourth level of the $\Z_3$-tower $\cT_{\timing}$
appearing in Table~\ref{table:runningtimes}
take well under a second while the overall computation is substantially longer.  It is only for a $\Z_2$-tower like $\cT' : Fy - y = [x^3]$ in characteristic two where the ramification invariant is very small and the Artin-Schreier-Witt equation is very simple that the computations with Witt vectors take substantial time compared to the other parts of the computation.  In particular, the Witt vector computations needed to analyze the eighth level of $\cT'$ using Finotti's algorithms took about six hours (and then around 40 more minutes to compute the a-number) while {\sc Magma}'s native functionality did not finish the Witt vector computations within $24$ hours.

\subsection{Standard Form for Artin-Schreier-Witt Towers} \label{ss:standardform}
Let $\cT$ be a $\Z_p$-tower over a finite field $k$ of characteristic $p$ ramified over $S \subset \cT(0)$.  For $Q \in S$, let $Q(n)$ be the unique point of $\cT(n)$ over $Q$.  The function field of $\cT(n)$ is an Artin-Schreier extension of $\cT(n-1)$, given by adjoining a root of the Artin-Schreier equation 
\[
y_n^p - y_n = f_n
\]
for some $f_n \in k(\cT(n-1))$.  This representation is of course not unique: making the change of variable $y_n' = y_n + z$ replaces $f_n$ by $f_n' = f_n + z^p - z$ but gives an isomorphic extension of fields.

\begin{defn} \label{defn:standardform}
We say the functions $(f_1,f_2,\ldots,)$ present the tower $\cT$ in {\em standard form} provided that for all positive integers $n$, we have $d_Q(\cT(n)) = \ord_{Q(n-1)}(f_n)$ for all $Q \in S$ and $f_n$ is regular away from the points of $\cT(n-1)$ over $S$.  We can analogously talk about a single $\Z/p\Z$-cover being presented in standard form.
\end{defn}

\begin{remark}
 Over the projective line, the theory of partial fractions allows one to write every $\Z/p\Z$-cover of the projective line in standard form.
 \end{remark}
 
If $d_Q(\cT(n)) \neq \ord_{Q(n-1)}(f_n)$, it is always possible to locally modify the presentation of $\cT(n) \to \cT(n-1)$ so that $d_Q(\cT(n)) = \ord_{Q_n}(f'_n)$ by making a change of variable $y_n' = y_n + z$ where $z$ has the appropriate local behavior at $Q(n-1)$.  But it is not always possible to do so at each ramified point while keeping $f'_n$ regular away from the points of $\cT(n-1)$ above $S$; see \cite[\S7]{shabat}, especially the second example after Proposition 49.
Therefore the following result of Madden about towers over the projective line is initially surprising.  

\begin{fact}[{{\hspace{1sp}\cite[Theorem 2]{madden}}}] \label{fact:standardform}
Every $\Z_p$-tower over the projective line can be presented in standard form.
\end{fact}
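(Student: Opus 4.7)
The plan is to proceed by induction on the level $n$, producing a standard-form presentation layer-by-layer. Suppose $(f_1,\dots,f_{n-1})$ presents $\cT$ in standard form through level $n-1$, and let $f_n\in k(\cT(n-1))$ be any element with $\cT(n)=\cT(n-1)(y_n)$ and $y_n^p-y_n=f_n$. The only freedom in choosing $f_n$ is to replace $y_n$ by $y_n+z$ for some $z\in k(\cT(n-1))$, which changes $f_n$ to $\widetilde{f}_n:=f_n+(z^p-z)$. The goal is therefore to exhibit a single $z$ so that $\widetilde{f}_n$ has a pole of order exactly $d_Q(\cT(n))$ at each $Q(n-1)$ with $Q\in S$ and is regular at every other place of $\cT(n-1)$.

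I would first verify that each required modification can be performed locally. At a place $P$ of $\cT(n-1)$ where $f_n$ has a pole of order $m$ divisible by $p$, write $f_n=at^{-m}+O(t^{-m+1})$ for a uniformizer $t$ and $a\in k$; since $k$ is perfect, choose $\alpha\in k$ with $\alpha^p=a$, and observe that subtracting $(\alpha t^{-m/p})^p-\alpha t^{-m/p}$ strictly decreases the pole order at $P$. Iterating, any pole of $p$-divisible order can be reduced until its order is either zero or prime to $p$. Because $\cT(n)\to\cT(n-1)$ is unramified away from the preimages of $S$, local Artin--Schreier theory forces every pole of $f_n$ at a place $P$ not over $S$ to have order divisible by $p$, so it can be cleared entirely. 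At the ramified place $Q(n-1)$, the same iteration reduces the pole order to its minimal prime-to-$p$ value, which by the classical formula for the ramification break of a local Artin--Schreier extension equals $d_Q(\cT(n))$.

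The main obstacle is to perform all of these local reductions \emph{simultaneously}, using a single global $z\in k(\cT(n-1))$ that does not introduce new poles elsewhere. This is where the hypothesis $\cT(0)=\PP^1_k$ is essential. I would exploit that every element of $k(x)$ admits a partial fraction decomposition over $\bar k$ into summands each supported at a single closed point of $\PP^1$, and then transport this decomposition to $k(\cT(n-1))$ using a chosen $k(x)$-basis (for instance the monomials in $y_1,\dots,y_{n-1}$ coming from the inductive presentation). Concretely, for each place $P$ of $\cT(n-1)$ at which a local correction $z_P$ is required, one constructs a global function $z_P^{\mathrm{glob}}\in k(\cT(n-1))$ whose polar divisor at $P$ agrees with that of $z_P$ modulo the regular functions, while all other poles of $z_P^{\mathrm{glob}}$ are supported over $S$ (enlarging $f_n$'s pole at $Q(n-1)$ by at most an amount that can be absorbed by a further local reduction there). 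Summing the $z_P^{\mathrm{glob}}$ over the finitely many bad places produces the desired $z$, and the resulting $\widetilde{f}_n$ is in standard form, completing the induction.

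The crux is the explicit construction of the globalizing functions $z_P^{\mathrm{glob}}$ using the $\PP^1$-structure: this is what fails on a general base curve, and it is also the step where the argument is most delicate, since one must verify that pushing a local modification across the cover $\cT(n-1)\to\PP^1_k$ only creates new polar contributions at points over $S$, where they can be reabsorbed. Once this is established, the statement of Fact~\ref{fact:standardform} follows.
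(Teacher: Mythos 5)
Your overall strategy --- induction on the level, local Artin--Schreier reduction of $p$-divisible pole orders, and globalization of the local corrections using the $\PP^1$-structure --- is exactly the route taken by Madden; the paper itself does not prove this statement but cites \cite[Theorem 2]{madden} and sketches the same procedure in Section~\ref{ss:standardform} for the case $S=\{\infty\}$. Your local analysis is fine. The problem is that your third paragraph, which you yourself identify as the crux, is not an argument but a restatement of what must be proved. When you add $\wp(z_P^{\mathrm{glob}})$ to clear a pole at a place $P$ not over $S$, the pole of $f_n$ at $Q(n-1)$ grows to $p$ times the pole order of $z_P^{\mathrm{glob}}$ there, and ``absorbing'' this ``by a further local reduction'' requires, at each step of that reduction, a \emph{global} function with a pole of prescribed order $m/p$ and prescribed leading coefficient at $Q(n-1)$ and with \emph{no poles away from} the points over $S$. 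If no such function exists you are forced to reintroduce poles off $S$, and the procedure need not terminate; this is precisely the obstruction that makes the statement false over a general base curve (cf.\ the example of Shabat cited just before Fact~\ref{fact:standardform}). So the parenthetical claim is circular.

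What is missing is the actual content of Madden's proof: an explicit description of the ring of functions on $\cT(n-1)$ regular away from the points over $S$, with computed pole orders. In the case $S=\{\infty\}$, the inductive hypothesis that $(f_1,\dots,f_{n-1})$ is in standard form gives $\ord_{P(i)}(y_i)=-d_\infty(\cT(i))$, the monomials $x^{\nu}y_1^{a_1}\cdots y_{n-1}^{a_{n-1}}$ with $0\le a_i<p$ span this ring, and Madden's Lemma~3 (the decomposition of $L(\mathfrak{a}^{-1})$) shows that their pole orders at $P(n-1)$ realize every value that is needed in the reduction --- in particular every $m/p$ arising from a $p$-divisible pole order $m>d_\infty(\cT(n))$. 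Without this semigroup computation (or an equivalent Riemann--Roch argument for divisors supported only over $S$), the induction does not close. Note also that partial fractions on $\PP^1$ transported through a $k(x)$-basis do not by themselves control the pole orders of the basis elements at $Q(n-1)$; that control is exactly what the standard-form hypothesis at lower levels buys you, and it is the step you must carry out explicitly.
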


While it is essential for our computations that we work with a tower in standard form, the description of the tower as a sequence of Artin-Schreier extensions produced by Artin-Schreier-Witt theory need not be in standard form. Thus a key step in our computations is to explicitly rewrite the given Artin-Schreier-Witt description of a tower in standard form.

Let $\cT$ be a $\Z_p$-tower with base the projective line (with function field $k(x)$) that is totally ramified over infinity and unramified elsewhere.  Let $P(n)$ be the unique point of $\cT(n)$ above infinity.  
Suppose the first $n-1$ levels of the tower are written in standard form using Artin-Schreier equations $y_i^p - y_i = f_i$.  We will describe how to rewrite the $n$th level in standard form.

Let the $n$th level be given by an Artin-Schreier equation 
\[
y_n^p - y_n = f
\]
where $f$ is a polynomial in $x$ and $y_1,\ldots, y_{n-1}$.  
Note that $\ord_{P(i)}(y_i) = \ord_{P(i-1)}(f_i) = d_\infty(\cT(i))$ for $i < n$.  
The $n$th level is not in standard form precisely if $\ord_{P(n-1)}(f)$ is a multiple of $p$, which necessarily must be larger than $d(\cT(n))$ \cite[Proposition 3.7.8]{stichtenoth}.  We may effectively write it in standard form if we can always produce a function $z$ on $\cT(n-1)$ with a pole of order $\ord_{P(n-1)}(f)/p $ at $P_{n-1}$ that is regular elsewhere:
 changing variables by an appropriate multiple of $z$, which replaces $f$ by $f + (cz)^p - cz$, will cancel out the leading term, after which we repeat this process.
 The proof of \cite[Theorem 2]{madden} shows there exists  non-negative integers $\nu$ and $a_1,\ldots, a_{n-1}$ such that $z= x^\nu y_1^{a_1} \ldots, y_{n-1}^{a_{n-1}}$ has the desired valuation (see in particular \cite[Lemma 3]{madden} and the subsequent decomposition of $L(\mathfrak{a}^{-1})$).
 
\begin{remark}
While conceptually easy, this is still computationally non-trivial as $f$ can have an enormous number of terms and require many iterations of the above procedure before ending up in standard form.  For example, putting the first five levels of the $\Z_3$-tower $\cT_{\timing}: Fy - y = [x^7] + [x^5]$ in standard form took a bit over three minutes. 
\end{remark}

\subsection{A Basis for Regular Differentials} \label{ss:maden} As before, let $\cT$ be a $\Z_p$-tower over a finite field $k$ of characteristic $p$ whose base is the projective line, totally ramified over infinity and unramified elsewhere. 
We identify $k(x)$ with the function field of $\cT(0)$, and present the tower in standard form as a sequence of Artin-Schreier extensions given by $y_n^p-y_n = f_n$.  Using the presentation of the tower in standard form, Madden's work \cite{madden} gives us an explicit basis for the space of regular differentials on $\cT(n)$.

\begin{definition}
Let $ \sS_n$ be the set of $n+1$-tuples of integers $(\nu,a_1,\ldots,a_n)$ such that:
\begin{enumerate}
    \item $0 \leq a_i <p$ for all $i$;

   \item  $\displaystyle 0 \leq p^n \nu \leq \left(\sum_{j=1}^n p^{n-j} d_\infty(\cT(j)) (p-1-a_j) \right) - p^n -1$.
\end{enumerate}
For $s = (\nu,a_1,\ldots,a_n) \in \sS_n$, define the differential
\[
\omega_s \colonequals x^\nu y_1^{a_1} \ldots y_n^{a_n} dx.
\]
\end{definition}
As $\cT$ is presented in standard form, \cite[Lemma 5]{madden} gives:

\begin{fact}
 The set $\{\omega_s : s \in \sS_n \}$ is a basis for $H^0(\cT(n),\Omega^1_{\cT(n)})$.
\end{fact}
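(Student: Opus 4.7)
The plan is to verify three assertions: (a) each $\omega_s$ with $s\in\sS_n$ is a regular differential on $\cT(n)$; (b) the family $\{\omega_s\}_{s\in\sS_n}$ is linearly independent over $k$; and (c) $|\sS_n|=g(\cT(n))$. Combined with $\dim_k H^0(\cT(n),\Omega^1_{\cT(n)})=g(\cT(n))$, these imply the claim.

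For (a), I would first observe that away from the point $P(n)$ lying over $\infty$, the morphism $\cT(n)\to \PP^1_k$ is \'etale, $x$ and $dx$ are regular, and because the tower is in standard form each $f_j$ is regular outside the points above $\infty$; the Artin--Schreier equations $y_j^p-y_j=f_j$ then imply each $y_j$ is integral over $k[x]$, so $\omega_s$ is regular outside $P(n)$ for every $\nu\ge 0$. For the order at $P(n)$, I would compute valuations one step at a time. The standard-form property $\ord_{P(j-1)}(f_j)=-d_\infty(\cT(j))$ together with Artin--Schreier theory gives $\ord_{P(j)}(y_j)=-d_\infty(\cT(j))$; pulling up the tower with ramification index $p^{n-j}$ then yields $\ord_{P(n)}(y_j)=-p^{n-j}d_\infty(\cT(j))$ and $\ord_{P(n)}(x)=-p^n$. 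The different of $\cT(j)/\cT(j-1)$ at $P(j)$ is $(p-1)(d_\infty(\cT(j))+1)$ by the standard Artin--Schreier different formula; summing these up the tower with the multiplicative tower formula for differents gives
\[
\ord_{P(n)}(dx)=-2p^n+\sum_{j=1}^n p^{n-j}(p-1)(d_\infty(\cT(j))+1).
\]
Combining and using $(p-1)\sum_{j=1}^n p^{n-j}=p^n-1$ yields
\[
\ord_{P(n)}(\omega_s)=-1-(\nu+1)p^n+\sum_{j=1}^n p^{n-j}(p-1-a_j)d_\infty(\cT(j)),
\]
which is non-negative exactly when $s\in\sS_n$.

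For (b), since $y_j$ satisfies a degree-$p$ Artin--Schreier equation over $k(\cT(j-1))$, the monomials $\{y_1^{a_1}\cdots y_n^{a_n}:0\le a_j<p\}$ form a $k(x)$-basis of $k(\cT(n))$. Hence the differentials $y_1^{a_1}\cdots y_n^{a_n}\,dx$ are $k(x)$-linearly independent as $(a_1,\ldots,a_n)$ varies, and for each fixed tuple the distinct powers $x^\nu$ are $k$-linearly independent. For (c), the same tower-of-differents computation combined with Riemann--Hurwitz gives
\[
2g(\cT(n))-2=-2p^n+\sum_{j=1}^n p^{n-j}(p-1)(d_\infty(\cT(j))+1).
\]
On the other hand, setting $c_j:=p-1-a_j$ and using $\gcd(d_\infty(\cT(j)),p)=1$, the map $(c_1,\ldots,c_n)\mapsto \sum_{j=1}^n p^{n-j}c_j d_\infty(\cT(j))\bmod p^n$ is a bijection onto $\Z/p^n\Z$ (it is triangular in base-$p$ digits with units on the diagonal). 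Summing the number of admissible $\nu$'s over all $(a_1,\ldots,a_n)$ then gives
\[
|\sS_n|=\frac{p-1}{2}\sum_{j=1}^n p^{n-j}d_\infty(\cT(j))-\frac{p^n-1}{2},
\]
which matches $g(\cT(n))$ after expanding the Riemann--Hurwitz expression.

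The main obstacle is the valuation computation in (a), whose correctness hinges essentially on the standard-form hypothesis: without it $\ord_{P(j)}(y_j)$ can exceed $-d_\infty(\cT(j))$ in absolute value, the clean tower formula for $\ord_{P(n)}(dx)$ fails, and the combinatorial description of $\sS_n$ unravels. Once the valuation formula is in hand, both the linear independence and the dimension count reduce to routine bookkeeping.
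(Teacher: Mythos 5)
Your proof is correct. Note, however, that the paper does not prove this statement at all: it is quoted as a ``Fact'' with a citation to Madden \cite[Lemma 5]{madden}, so what you have written is in effect a self-contained reconstruction of Madden's argument. Your three steps are all sound: the valuation formula
$\ord_{P(n)}(\omega_s)=-1-(\nu+1)p^n+\sum_{j=1}^n p^{n-j}(p-1-a_j)d_\infty(\cT(j))$
follows exactly as you say from the standard-form hypothesis, the Artin--Schreier different formula, and the tower formula for differents, and its non-negativity is precisely the defining inequality of $\sS_n$; the linear independence is immediate from the fact that the $y$-monomials form a $k(x)$-basis of $k(\cT(n))$; and the count $|\sS_n|=g(\cT(n))$ checks out against Riemann--Hurwitz (equivalently Lemma~\ref{lem:genuslower}). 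Two small points worth making explicit: (i) in the counting step, the number of admissible $\nu$ for a fixed $(a_1,\dots,a_n)$ is $\max\bigl(0,\lfloor (N(a)-1)/p^n\rfloor\bigr)$ with $N(a)=\sum_j p^{n-j}(p-1-a_j)d_\infty(\cT(j))$, and the single tuple $a_1=\dots=a_n=p-1$ (where $N(a)=0$) contributes $0$ rather than $-1$; this correction of $+1$ is exactly what makes your closed form for $|\sS_n|$ come out right, so it should be recorded rather than elided. (ii) The bijectivity of $(c_1,\dots,c_n)\mapsto\sum_j p^{n-j}c_jd_\infty(\cT(j))\bmod p^n$ uses that each $d_\infty(\cT(j))$ is prime to $p$, which holds because it is the lower ramification break of a totally (wildly) ramified $\Z/p\Z$-cover. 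With those details filled in, the argument is complete and is essentially the proof the cited reference supplies.
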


\subsection{A Matrix for the Cartier Operator} 
We continue with the notation of Section~\ref{ss:maden}.  To represent the Cartier operator  on $H^0(\cT(n),\Omega^1_{\cT(n)})$
as a matrix, it suffices to compute
$V_{\cT(n)}(\omega_s)$ for $s \in \sS_n$.  As the Cartier operator is $p^{-1}$-semilinear, we have
\begin{align*}
V_{\cT(n)} \left( x^\nu y_1^{a_1} \ldots y_n^{a_n} dx \right) & = V_{\cT(n)} \left( x^\nu y_1^{a_1} \ldots y_{n-1}^{a_{n-1}} (y_n^p - f_n)^{a_n} dx \right) \\
&= \sum_{i=0}^{a_n} \binom{a_n}{i} y_n^i V_{\cT(n)} \left( x^\nu y_1^{a_1} \ldots y_{n-1}^{a_{n-1}} (- f_n)^{a_n-i} dx \right).
\end{align*}
Notice that  $x^\nu y_1^{a_1} \ldots y_{n-1}^{a_{n-1}} (- f_n)^{a_n-i} dx$ doesn't depend on $y_n$, and is a rational differential one-form on $\cT(n-1)$.  Thus we may compute $V_{\cT(n)}(\omega_s)$ by applying the Cartier operator to (several) differentials on $\cT(n-1)$.  This gives a recursive method to compute with $V_{\cT(n)}$, ultimately reducing to computing with the Cartier operator on the base curve, the projective line.

This is the most computationally expensive step of the algorithm.  The genus $g(\cT(n))$ grows (at least) exponentially with $n$, and we must evaluate the Cartier operator on $g(\cT(n))$ differentials.  Furthermore, the function $f_n$ can have an enormous number of terms; its order at the point above infinity is $-d_\infty(\cT(n))$, which is also growing (at least) exponentially in $n$.

\begin{remark} \label{rmk:xdfast}
The basic tower $Fy - y = [x^d]$ is always substantially faster to compute with precisely because the polynomials $f_n$ presenting the tower in standard form are (somewhat) simpler.  For example, building a matrix representing the Cartier operator on the fifth level of the $\Z_3$-tower given by $Fy - y = [x^4]$ takes less than fifteen minutes, while doing the same for $Fy -y = [x^4] + [x^2]$ takes more than three and a half hours.
\end{remark}

To implement the step described above, we first pre-compute
\begin{equation} \label{eq:precompute}
V_{\cT(m)} \left( x^\nu y_1^{a_1} \ldots y_m^{a_m} dx \right)
\end{equation}
with $0 \leq \nu <p$ and $0\leq a_i <p$ for $1 \leq i \leq m$ for $m=1,2,\ldots ,n$.  The computation at level $m$ makes use of the pre-computations at level $m-1$.  Note that the semilinearity of the Cartier operator allows us to compute $V_{\cT(m)}(\omega_s)$ as a $k(x)$-linear combination of these special values quickly.  
\begin{remark}
To give some context, for the basic $\Z_3$-tower $\cT_{\timing}: Fy - y = [x^7]+[x^5]$, the pre-computations up to level $5$ took
about six hours.  Once they are completed, it takes less than four minutes to use them to build a matrix for the Cartier operator on the 5-th level of the tower.  As $g(\cT_{\timing}(5)) = 51546$, this matrix has over $2.6$ {\em billion} entries!
\end{remark}

\begin{remark}
As all of the $p^{m+1}$ pre-computations of \eqref{eq:precompute} with $ 0\leq \nu <p$ and $0 \leq a_i <p$ for level $m$ can be performed independently using the results from level $m-1$, this step would be amenable to parallelization.  
\end{remark}

\subsection{Linear Algebra Over Finite Fields}
Linear algebra over small finite fields is very efficient in {\sc Magma}.  For a basic $\Z_3$-tower like $\cT_{\timing}: Fy - y = [x^7]+[x^5]$, whose fifth level has genus $51546$, computing the dimension of the kernel of the $51546$ by $51546$ matrix representing the Cartier operator on the space of regular differentials takes about a minute and a half.  Some of the matrices we consider, such as those in Section~\ref{ss:fastermonodromy}, are of course even larger, and consume many gigabytes of memory in storage.

\section{Theoretical Evidence} \label{sec:theoretical}

In this section, we study the interaction of the trace map on differential forms with the Cartier operator in Artin-Schreier extensions.  We use this to provide theoretical evidence for our conjectures about the $a$-number in characteristic two, in particular proving Conjecture~\ref{conj:basicanumber} for basic $\Z_2$-towers and more generally proving Conjecture~\ref{conj:stableanumber} for $\Z_2$-towers. 

The following fact is standard and will be used repeatedly (see for example \cite[Lemma 3.7]{bc20}).

\begin{fact} \label{fact:ordersofvanishing}
Let $\pi : Y \to X$ be a $\Z/p\Z$-cover of curves over a perfect field $k$ of characteristic $p$, 
with Artin-Schreier equation $y^p - y = \psi$.  If the defining equation is in standard
form at a branch point $Q$ with ramification invariant $d_Q$ ({i.e.}~$\ord_Q(\psi) = -d_Q$)
then a meromorphic differential $\displaystyle \omega = \sum_{i=0}^{p-1} \omega_i y^i$ on $Y$
is regular above $Q$ if and only if
$$\ord_Q(\omega_i) \geq - \left \lceil \frac{(p-1-i) d_Q}{p} \right \rceil \quad \text{ for } 0 \leq i \leq p-1. $$
\end{fact}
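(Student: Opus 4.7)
The plan is to reduce regularity of $\omega$ at the unique point $\tilde Q$ of $Y$ over $Q$ to regularity of each summand $\omega_i y^i$ separately, by showing the orders $\ord_{\tilde Q}(\omega_i y^i)$ lie in distinct residue classes modulo $p$. The $\omega_i$ are meromorphic differentials on $X$ (pulled back to $Y$), as every differential on $Y$ admits such a decomposition since $\{1, y, \dots, y^{p-1}\}$ is a basis of $k(Y)/k(X)$.

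First I will compute $\ord_{\tilde Q}(y)$. Since $\ord_Q(\psi) = -d_Q$ with $p\nmid d_Q$, comparing valuations in $y^p - y = \psi$ forces $\ord_{\tilde Q}(y) = -d_Q$: the term $y^p$ has valuation $p\cdot\ord_{\tilde Q}(y)$, $y$ has valuation $\ord_{\tilde Q}(y)$, and $\psi$ has valuation $-p d_Q$, so the only consistent balance has $y^p$ matching $\psi$. Next, differentiating the Artin--Schreier equation gives $dy = -d\psi$ in characteristic $p$. Writing $y = u\, t^{-d_Q}$ with $t$ a uniformizer at $\tilde Q$ and $u$ a local unit, and using $p\nmid d_Q$ so that the leading term of $dy = -d_Q\, u\, t^{-d_Q-1}\,dt + t^{-d_Q}\,du$ does not vanish, yields $\ord_{\tilde Q}(dy) = -d_Q - 1$.

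Armed with these two facts, for any meromorphic differential $\omega_i$ on $X$ I will write $\omega_i = g_i\, d\psi$ with $g_i \in k(X)$; since $\ord_Q(d\psi) = -d_Q - 1$, this gives
\[
\ord_{\tilde Q}(\pi^*\omega_i) = p\,\ord_Q(g_i) + \ord_{\tilde Q}(dy) = p\, \ord_Q(\omega_i) + (p-1)(d_Q+1).
\]
Combining with $\ord_{\tilde Q}(y^i) = -id_Q$ produces $\ord_{\tilde Q}(\omega_i y^i) \equiv -(i+1)d_Q - 1 \pmod p$. Since $p\nmid d_Q$, these residues are pairwise distinct for $i\in\{0,\dots,p-1\}$, so the summands have pairwise distinct valuations and therefore
\[
\ord_{\tilde Q}\Bigl(\sum_i \omega_i y^i\Bigr) = \min_i \ord_{\tilde Q}(\omega_i y^i).
\]

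Regularity of $\omega$ at $\tilde Q$ thus reduces to $\ord_{\tilde Q}(\omega_i y^i)\ge 0$ for each $i$, which is the integer inequality $p\,\ord_Q(\omega_i) \ge i\, d_Q - (p-1)(d_Q+1)$. Dividing by $p$, taking ceilings, and applying the identity $\lceil (p-1-i)d_Q/p\rceil = \lfloor((p-1-i)d_Q + p-1)/p\rfloor$ (which follows from $p\nmid d_Q$ when $i<p-1$ and is trivial when $i=p-1$) yields the desired bound $\ord_Q(\omega_i)\ge -\lceil(p-1-i)d_Q/p\rceil$. The hardest part will be the final arithmetic bookkeeping converting the integer inequality into the ceiling expression, but this is routine once the distinct-residues observation is in hand.
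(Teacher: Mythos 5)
Your proof is correct and follows essentially the same route as the standard argument the paper points to (\cite[Lemma 3.7]{bc20}): total ramification gives $\ord_{\tilde Q}(y)=-d_Q$ and $\ord_{\tilde Q}(\pi^*\omega_i)=p\,\ord_Q(\omega_i)+(p-1)(d_Q+1)$, the summands' valuations are pairwise distinct modulo $p$ because $p\nmid d_Q$, and regularity of $\omega$ therefore reduces termwise to the stated ceiling inequality. The only cosmetic slip is in the last step: the identity $\lceil N/p\rceil=\lfloor (N+p-1)/p\rfloor$ holds for every integer $N$, so no appeal to $p\nmid d_Q$ is needed there.
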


\subsection{Vanishing Trace}  Let $\pi:Y\rightarrow X$ be a $\Z/p\Z$-cover
with branch locus $S$, and for $Q \in S$ let $d_Q$ be the ramification invariant above $Q$.

Associated to the finite map $\pi$ is a canonical $\Oscr_X$-linear trace morphism 
$\pi_* \Omega^1_{Y/k}\rightarrow \Omega^1_{X/k}$ which is dual, via Grothendieck--Serre duality,
to the usual pullback morphism $\Oscr_X\rightarrow \pi_*\Oscr_Y$ on functions.
We will write $\pi_*$ for the induced trace map on global differential forms.  Note that the Cartier operator
is induced by the trace morphism $F_*$ attached to absolute Frobenius; since Frobenius
commutes with arbitrary ring maps, it in particular commutes with $\pi$
and it follows that the trace map commutes with the Cartier operator.
We will use the following formula repeatedly
\begin{equation}
    \pi^* \pi_* = \sum_{g\in \Z/p\Z} g^*.\label{eq:tracesum}
\end{equation}

In characteristic two, we can be very explicit about the kernel of the trace map:

\begin{lem} \label{lem:tracekernel}
If $p=2$, the kernel of $\pi_*$ on $H^0(Y,\Omega^1_{Y})$ is isomorphic to $H^0(X,\Omega^1_X(\sum_{Q \in S} \lceil d/2 \rceil [Q]))$.
\end{lem}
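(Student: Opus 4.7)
The plan is to exploit the explicit description of differentials on an Artin–Schreier cover in characteristic two together with the standard trace formula $\pi^*\pi_* = \sum_{g\in \Z/2\Z} g^*$ from equation \eqref{eq:tracesum}.

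First I would fix the Artin–Schreier equation $y^2 - y = \psi$ presenting $\pi$ in standard form at every $Q\in S$, and let $\sigma$ be the nontrivial element of $\Z/2\Z$, so that $\sigma(y)=y+1$. Because $y^2-y=\psi$ gives $dy = d\psi$ in characteristic two, the relative cotangent sheaf of $\pi$ vanishes and $\pi_*\Omega^1_{Y/k} \simeq \Omega^1_{X/k}\otimes_{\Oscr_X}\pi_*\Oscr_Y$. Concretely, every meromorphic differential on $Y$ has a unique expression
\[
\omega = \omega_0 + \omega_1\, y, \qquad \omega_0,\omega_1 \in \Omega^1_{k(X)/k},
\]
with $\pi^*$ identifying $\Omega^1_X$ with the $\omega_1=0$ summand.

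Next I would compute $\pi_*$ using \eqref{eq:tracesum}: since $\sigma(\omega) = (\omega_0+\omega_1)+\omega_1 y$, in characteristic two we get
\[
\pi^*\pi_*\omega = \omega + \sigma(\omega) = 2\omega_0 + \omega_1 + 2\omega_1 y = \omega_1.
\]
As $\pi^*$ is injective, $\omega \in \ker\pi_*$ if and only if $\omega_1 = 0$, i.e.\ $\omega = \omega_0$ is pulled back from $X$. So the kernel of $\pi_*$ on meromorphic differentials is naturally identified with $\Omega^1_{k(X)/k}$, and the problem reduces to determining which meromorphic $\omega_0$ on $X$ become \emph{regular} on $Y$.

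Finally, I would invoke Fact~\ref{fact:ordersofvanishing} with $p=2$ applied to $\omega=\omega_0 + 0\cdot y$: regularity of $\omega$ above each branch point $Q\in S$ is equivalent to $\ord_Q(\omega_0)\geq -\lceil d_Q/2\rceil$, while at any unramified point $\pi$ is \'etale so regularity of $\omega$ on $Y$ is equivalent to regularity of $\omega_0$ on $X$. These conditions cut out exactly $H^0(X,\Omega^1_X(\sum_{Q\in S}\lceil d_Q/2\rceil[Q]))$, giving the claimed isomorphism. There is no serious obstacle here: the only subtleties are verifying the characteristic-two identity $dy=d\psi$ so that the $\{1,y\}$-decomposition of differentials is valid and keeping the sign/ceiling conventions consistent when translating the regularity condition into the line bundle $\Omega^1_X(\sum\lceil d_Q/2\rceil[Q])$.
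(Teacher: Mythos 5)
Your proof is correct and follows essentially the same route as the paper: decompose $\eta = \omega_0 + \omega_1 y$, use \eqref{eq:tracesum} to see that $\pi_*\eta = 0$ forces $\omega_1 = 0$, and then apply Fact~\ref{fact:ordersofvanishing} to translate regularity of $\omega_0$ on $Y$ into the pole bound $\ord_Q(\omega_0) \ge -\lceil d_Q/2\rceil$ at each branch point. The extra details you supply (injectivity of $\pi^*$, the \'etale locus, the identity $dy = d\psi$ justifying the $\{1,y\}$-decomposition) are all sound elaborations of what the paper leaves implicit.
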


\begin{proof}
For $Q \in S$, locally express the cover as an Artin-Schreier extension $y^2 - y = f$ in standard form at $Q$
with $g^*y=y+1$ for the nontrivial element $g\in \Z/2\Z$.
A general meromorphic differential on $Y$ may be written as $\eta=\omega_0 + y \omega_1$ with $\omega_0, \omega_1$ meromorphic differentials on $X$.  By \eqref{eq:tracesum}, $\pi_*\eta=0$ forces $\omega_1=0$, and $\eta$ is regular at $Q$ if and only if $\ord_Q(\omega_0) \geq - \lceil d/2 \rceil $ (see Fact~\ref{fact:ordersofvanishing}).
\end{proof}

We next analyze the trace of differentials killed by the Cartier operator.

\begin{theorem} \label{thm:tracevanishing}
 If
 $\eta \in H^0(Y,\Omega^1_Y)$ is killed by $V_Y$, then for every branch point $Q \in S$ we have
     $\ord_Q(\pi_*(\eta)) \ge d_Q -\lceil d_Q/p \rceil$
    with strict inequality  when $d_Q \equiv \lfloor d_Q/p \rfloor \bmod p$.
\end{theorem}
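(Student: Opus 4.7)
The plan is to argue locally at $Q$, where $\pi$ is given in standard form by the Artin--Schreier equation $y^p - y = f$ with $\ord_Q(f) = -d_Q$ and $\gcd(d_Q, p) = 1$. Any differential on $Y$ regular at the point above $Q$ can be written $\eta = \sum_{i=0}^{p-1}\omega_i y^i$ where each $\omega_i$ is a meromorphic differential on $X$, satisfying the bound $\ord_Q(\omega_i)\ge -\lceil (p-1-i) d_Q / p\rceil$ from Fact~\ref{fact:ordersofvanishing}. A direct computation using $g^*y = y+c$ for $g\in\Gal(Y/X)\cong\F_p$ shows that $\sum_{c\in\F_p}(y+c)^i$ equals $0$ for $0\le i\le p-2$ and equals $-1$ for $i=p-1$; combined with $\pi^*\pi_* = \sum_{g}g^*$ from \eqref{eq:tracesum}, this yields $\pi_*\eta = -\omega_{p-1}$, so the theorem reduces to proving $\ord_Q(\omega_{p-1}) \ge d_Q - \lceil d_Q/p\rceil$.

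The key step is to translate $V_Y\eta = 0$ into explicit conditions on the $\omega_i$. The Artin--Schreier relation $y = y^p - f$ gives $y^i = (y^p - f)^i = \sum_{k=0}^{i}\binom{i}{k}(-f)^{i-k}y^{pk}$. Combining this with the $p^{-1}$-semilinearity $V_Y(a^p\omega) = a\, V_Y(\omega)$ and the functoriality $V_Y\circ\pi^* = \pi^*\circ V_X$, one computes
\[
V_Y(\eta) \;=\; \sum_{j=0}^{p-1} y^j\, V_X\!\left(\sum_{k=0}^{p-1-j}\binom{k+j}{j}(-f)^k\omega_{k+j}\right),
\]
so the independence of $1,y,\ldots,y^{p-1}$ over $\pi^*k(X)$ makes $V_Y\eta = 0$ equivalent to the system
\[
(*_j):\quad V_X\!\left(\sum_{k=0}^{p-1-j}\binom{k+j}{j}(-f)^k\omega_{k+j}\right)=0 \qquad (j=0,\ldots,p-1).
\]
In particular, $(*_{p-1})$ forces $V_X\omega_{p-1}=0$, which in terms of a local parameter $t$ at $Q$ says the Laurent coefficients $c_{p-1,m}$ defined by $\omega_{p-1} = \sum_m c_{p-1,m} t^m\,dt$ vanish whenever $m \equiv -1\pmod p$.

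To conclude, I will show inductively that $c_{p-1, m} = 0$ for all $0\le m < d_Q - \lceil d_Q/p\rceil$. Inside $(*_j)$, the summand $\binom{p-1}{j}(-f)^{p-1-j}\omega_{p-1}$ has the smallest possible $t$-valuation $-(p-1-j)d_Q$, whereas the other summands involving $\omega_{k+j}$ with $k+j\le p-2$ have $t$-valuation at least $-(p-2-j)d_Q - \lceil d_Q/p\rceil$ by regularity. For each target $m$ the congruence $m \equiv (p-1-j)d_Q - 1\pmod p$ determines a unique $j\in\{0,\ldots,p-1\}$ (here we use $\gcd(d_Q,p) = 1$); setting $pM - 1 = -(p-1-j)d_Q + m$, the valuation comparison under $m < d_Q - \lceil d_Q/p\rceil$ ensures that the only nontrivial contribution to the $t^{pM-1}\,dt$ coefficient of $(*_j)$ comes from the $\omega_{p-1}$ term. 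Writing $f = \sum_n b_n t^n$, this contribution gives a relation
\[
\pm\, b_{-d_Q}^{\,p-1-j}\, c_{p-1, m}\;+\;\sum_{l<m}(\text{explicit coefficient})\cdot c_{p-1, l}\;=\;0
\]
with $b_{-d_Q}\neq 0$. A joint induction on $m$ (with base case $m=0$ handled by regularity of $\omega_{p-1}$, giving $c_{p-1,l} = 0$ for $l<0$) then forces $c_{p-1,m} = 0$ throughout the target range.

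For the strict inequality, note that the boundary coefficient $c_{p-1, d_Q - \lceil d_Q/p\rceil}$ is itself killed by $(*_{p-1})$ precisely when $d_Q - \lceil d_Q/p\rceil \equiv -1 \pmod p$; using $\lceil d_Q/p\rceil = \lfloor d_Q/p\rfloor + 1$ (since $\gcd(d_Q,p)=1$), this condition simplifies to $d_Q \equiv \lfloor d_Q/p\rfloor \pmod p$, as claimed. The hardest part of the proof will be verifying the valuation threshold that isolates the $\omega_{p-1}$ contribution inside each $(*_j)$ once $m$ is sufficiently small; the remaining bookkeeping, in particular the derivation of the system $(*_j)$ from the multinomial expansion of $y^i = (y^p-f)^i$, is routine.
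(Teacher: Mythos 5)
Your proposal is correct, and its first half coincides with the paper's proof: the same local decomposition $\eta=\sum_i\omega_i y^i$, the same identification $\pi_*\eta=-\omega_{p-1}$ via \eqref{eq:tracesum}, and the same system of equations (your $(*_j)$ is exactly the coefficient-wise form of the formula for $V_Y(\eta)$ from \cite[Lemma 4.1]{bc20} that the paper uses, including the consequence $V_X\omega_{p-1}=0$). Where you diverge is the endgame. The paper isolates $V_X(\omega_{p-1}(-\psi)^{p-1-i})$, bounds the order of vanishing of the complementary sum, and then --- crucially --- replaces the uniformizer by one for which $-\psi$ is a pure monomial $cz^{-d}$ (Hensel's lemma, using $p\nmid d$), so that a $p$-basis expansion of $\omega_{p-1}$ turns each equation into an exact statement about a single coefficient function $f_{r_i}$; the case analysis on the residue $s_i$ then yields the bound and the strictness criterion. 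You instead keep an arbitrary uniformizer and run an induction on the Laurent coefficients $c_{p-1,m}$ of $\omega_{p-1}$, using that in the relevant Fourier mode $pM-1$ of $(*_{j(m)})$ the linear system is lower triangular with invertible diagonal entry $\pm b_{-d}^{p-1-j}$, the off-diagonal terms (lower-order coefficients of $f$ and the $\omega_{k+j}$ with $k+j\le p-2$) being controlled by the regularity bounds and the induction hypothesis. Both routes hinge on the same valuation threshold $m<d_Q-\lceil d_Q/p\rceil$, and your strictness criterion ($m_0\equiv -1\bmod p$ iff $d_Q\equiv\lfloor d_Q/p\rfloor\bmod p$) agrees with the paper's ($q\equiv r\bmod p$). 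The paper's normalization buys cleaner closed-form identities; your coefficient induction is more elementary and avoids the change of uniformizer, at the cost of some bookkeeping that you have correctly identified as the delicate point and which does check out.
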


\begin{proof}
    We may work locally at $Q$, where the extension is given by an Artin-Schreier equation $y^p-y = \psi$ with $d \colonequals d_Q = -\ord_Q(\psi) $.  We write $d=pq+r$ with $0 < r < p$, and decompose
    \[
        \eta = \sum_{i=0}^{p-1} \omega_i y^i
    \]
    with the $\omega_i$ differentials on $X$.
    If $\eta$ is regular above $Q$, Fact~\ref{fact:ordersofvanishing} implies $\ord_Q(\omega_i) \ge - \lceil (p-1-i)d/p\rceil$. 
    Furthermore, substituting $y=y^p-\psi$ in the expression for $\eta$ above, 
    and using the fact that $V$ is additive and $p^{-1}$-linear, 
    we compute (as in \cite[Lemma 4.1]{bc20}) that
    \[
        V_Y(\eta) = \sum_{i=0}^{p-1} \left( \sum_{j=i}^{p-1} \binom{j}{i} V_X(\omega_j(-\psi)^{j-i})\right) y^i.
    \]
    The assumption that $V_Y(\eta)=0$ implies that $V_X(\omega_{p-1})=0$ and that for $0\le i\le p-2$
    \begin{equation}
         V_X(\omega_{p-1}(-\psi)^{p-1-i}) = - \sum_{j=i}^{p-2}\binom{j}{i} V_X(\omega_j(-\psi)^{j-i}).
         \label{decomp}
    \end{equation}
    It is straightforward to check that the order of vanishing of the right side of \eqref{decomp} at $Q$
    is at least $\ord_Q(V_X(\omega_{p-2}(-\psi)^{p-2-i}))$, from which we deduce (replacing
    $i$ with $p-1-i$) that
    \begin{equation}
        \ord_Q(V_X(\omega_{p-1}(-\psi)^i)) \ge - \left\lceil\frac{(i-1)d + \lceil d/p \rceil}{p}  \right\rceil = -(i-1)q - \left\lceil \frac{(i-1)r + q +1}{p} \right\rceil.
        \label{eq:Vpoles}
    \end{equation}
    
    Let $u$ be a uniformizer at $Q$.  We may write $-\psi = cu^{-d}v$ with $c\in k^{\times}$ and $v$ a $1$-unit in the local ring at $Q$.  Working in the complete local ring at $Q$, as $p\nmid d$ Hensel's lemma implies there exists a $1$-unit $w$
    with $w^{-d}=v$, whence $-\psi = cu^{-d}w^{-d} = c z^{-d}$ with $z \colonequals uw$ a uniformizer at $Q$.
    Let $r_i$ be the least nonnegative residue of $ir$ modulo $p$, so that 
    $id = i(pq+r) = p(iq + \lfloor ri/p \rfloor) + r_i$; then
    \[
        V_X(\omega_{p-1}(-\psi)^i) = V_X(\omega_{p-1}c^iz^{-di}) = c^{i/p}z^{-iq-\lfloor ri/p \rfloor} V_X(\omega_{p-1}z^{-r_i}).
    \]
    We obtain
    \begin{equation}
            \ord_Q(V_X(\omega_{p-1}(-\psi)^i)) = -iq - \lfloor ri/p \rfloor + \ord_Q(V_X(\omega_{p-1}z^{-r_i})).
    \end{equation}
    Combining this with \eqref{eq:Vpoles} gives
    \[
        \ord_Q(V(\omega_{p-1}z^{-r_i})) \ge q +\left\lfloor \frac{ri}{p} \right\rfloor- \left\lceil \frac{(i-1)r + q +1}{p} \right\rceil.
    \]
    As $z$ is a uniformizer, the set $\{z^i\}_{0\le i< p}$ is a $p$-basis for $\widehat{\mathscr{O}}_{X,Q}\simeq k[\![z]\!]$, so we may write
    \begin{equation}
        \omega_{p-1} = (f_1^p z + f_2^p z^2 + \cdots + f_{p-1}^p z^{p-1} + f_p^p z^p) \frac{dz}{z}
    \end{equation}
    where $f_i$ are local functions.  Since $V_X(\omega_{p-1})=0$, we have $f_p=0$, and we compute
    \[
        V(\omega_{p-1} z^{-r_i}) = f_{r_i} \frac{dz}{z}.
    \]
    Therefore we conclude that for $1\le i \le p-1$
    \begin{equation}
        \ord_Q(f_{r_i}) - 1 \ge q +\left\lfloor \frac{ri}{p} \right\rfloor- \left\lceil \frac{(i-1)r + q +1}{p} \right\rceil.
    \end{equation}
    Let $s_i$ be the least nonnegative residue of $(i-1)r + q + 1$ modulo $p$, so that 
    \[
       p \left\lceil \frac{(i-1)r + q +1}{p} \right\rceil  = (i-1)r+q+1 - s_i + \begin{cases}  p & s_i\neq 0 \\ 0 & s_i =0.\end{cases}
    \]
    We find 
    \begin{align*}
        \ord_Q(f_{r_i}^p z^{r_i} \frac{dz}{z}) &\ge pq +p\left(\left\lfloor \frac{ri}{p} \right\rfloor- \left\lceil \frac{(i-1)r + q+1}{p} \right\rceil+1\right)+ ri - p\left\lfloor\frac{ri}{p}\right\rfloor - 1 \\
        & = pq + ri +p - 1 - p\left\lceil \frac{(i-1)r + q+1}{p} \right\rceil \\
        & = pq + ri +p - 1  - (i-1)r - (q+1) + s_i -  \begin{cases}  p & s_i\neq 0 \\ 0 & s_i =0\end{cases}\\
        &= (pq+r) - (q+1) + p-1+s_i -\begin{cases}  p & s_i\neq 0 \\ 0 & s_i =0\end{cases} \\ 
        &= d - \lceil d/p \rceil  + \begin{cases}  s_i - 1 & s_i\neq 0 \\ p-1 & s_i =0.\end{cases}
    \end{align*}
    If $q\not\equiv r \bmod p$, we claim there exists $i$ with $1\le i \le p-1$
    and $s_i = 1$.  Indeed, $i = (1 - q r^{-1} \bmod p)$ does the trick.  
    On the other hand, if $q\equiv r\bmod p$, then $(i-1)r + q \equiv i r \bmod p$,
    which is never $0 \bmod p$, so that $s_i\neq 1$ for all $i$ with $1\le i \le p-1$ in this case.
    We conclude that 
    \[
        \ord_Q(\omega_{p-1}) \ge d - \left\lceil \frac{d}{p} \right \rceil,   
    \]
    and that the inequality is strict if $q\equiv r \bmod p$, or what is the same, if $\lfloor d/p \rfloor \equiv d\bmod p$.
\end{proof}

\begin{cor}\label{cor:traceeq0}
   Suppose that  
    $\displaystyle \sum_{Q\in S} (d_Q -\lceil d_Q/p\rceil) \ge 2g(X)-2$,
    with strict inequality when $d_Q\not\equiv \lfloor d_Q/p\rfloor \bmod p$ for all $Q \in S$.
    If $\eta\in H^0(Y,\Omega^1_Y)$ is killed by $V_Y$ then $\pi_*(\eta)=0$.
\end{cor}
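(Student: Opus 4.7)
The plan is to derive a contradiction from assuming $\pi_*(\eta) \neq 0$ by bounding the degree of its divisor from below using Theorem~\ref{thm:tracevanishing}. Since the trace map is a morphism of sheaves $\pi_*\Omega^1_Y \to \Omega^1_X$ on $X$, taking global sections shows that $\pi_*(\eta)$ is a global regular differential on $X$. If it is nonzero, its divisor has degree exactly $2g(X)-2$.

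The key step will then be to combine the local inequalities $\ord_Q(\pi_*(\eta)) \geq d_Q - \lceil d_Q/p \rceil$ for $Q \in S$ from Theorem~\ref{thm:tracevanishing} with the trivial bound $\ord_P(\pi_*(\eta)) \geq 0$ at all other points of $X$. Summing, we obtain
\[
2g(X) - 2 = \sum_{P} \ord_P(\pi_*(\eta)) \geq \sum_{Q \in S} \ord_Q(\pi_*(\eta)) \geq \sum_{Q \in S}(d_Q - \lceil d_Q/p \rceil).
\]
By the first half of the hypothesis, $\sum_{Q \in S}(d_Q - \lceil d_Q/p \rceil) \geq 2g(X)-2$, so both inequalities must in fact be equalities, forcing $\ord_Q(\pi_*(\eta)) = d_Q - \lceil d_Q/p \rceil$ for every $Q \in S$.

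The final step is to rule out this equality case by splitting on the congruence condition in Theorem~\ref{thm:tracevanishing}. If some $Q \in S$ satisfies $d_Q \equiv \lfloor d_Q/p \rfloor \pmod p$, then Theorem~\ref{thm:tracevanishing} gives a strict inequality at $Q$, contradicting the equality above. Otherwise, $d_Q \not\equiv \lfloor d_Q/p \rfloor \pmod p$ for every $Q \in S$, in which case the hypothesis of the corollary promotes the inequality $\sum_{Q \in S}(d_Q - \lceil d_Q/p \rceil) \geq 2g(X)-2$ to strict, again contradicting the displayed equality. Either way we conclude $\pi_*(\eta) = 0$.

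There is no serious obstacle here: Theorem~\ref{thm:tracevanishing} does all the heavy lifting, and the corollary amounts to a clean degree-counting argument on $X$. The only point requiring care is bookkeeping the two cases in the hypothesis so that the strict-inequality clause in Theorem~\ref{thm:tracevanishing} is invoked precisely when the hypothesis allows equality in $\sum_{Q \in S}(d_Q - \lceil d_Q/p \rceil) \geq 2g(X)-2$.
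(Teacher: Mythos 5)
Your proof is correct and is essentially the paper's own argument: the paper phrases the same degree count as the vanishing of $H^0(X,\Omega^1_X(-D))$ for an effective divisor $D$ of degree $>2g(X)-2$ built from the lower bounds of Theorem~\ref{thm:tracevanishing}, which is equivalent to your observation that a nonzero regular differential has divisor of degree exactly $2g(X)-2$. Your explicit case split---using the strict inequality from the theorem when some $d_Q\equiv\lfloor d_Q/p\rfloor\bmod p$, and the strict inequality from the hypothesis otherwise---is precisely the bookkeeping the paper leaves implicit in ``follows immediately.''
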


\begin{proof}
As the differential $\pi_*(\eta)$ is regular when $\eta$ is, the corollary follows immediately from
the fact that for an effective divisor $D$, one has 
$H^0(X,\Omega^1_X(-D))=0$ whenever $\deg(D) > 2g-2$.
\end{proof}

Finally for a $\Z_p$-tower $\cT$  totally ramified over a non-empty set $S$ we investigate the hypothesis
\begin{equation}
    \sum_{Q\in S} (d_Q(\cT(n+1)) -\lceil d_Q(\cT(n+1))/p\rceil) > 2g(\cT(n))-2
    \label{eq:ramhyp}
\end{equation}
 For convenience, we define
 \[
 \Delta_n \colonequals   \sum_{Q\in S} (d_Q(\cT(n+1)) -\lceil d_Q(\cT(n+1))/p\rceil) - (2g(\cT(n))-2).
 \]

\begin{lemma} \label{lem:technicalramhypothesis}
Suppose that exists an integer $N$ such that 
\begin{equation}
\sum_{Q \in S} (s_Q(\cT(j+1)) - 2s_Q(\cT(j)) \ge 2 g(\cT(0))-2 + \# S\quad\text{for all}\ j>N.
\label{techhyp}
\end{equation}
If \eqref{techhyp} is an equality for all $j>N$, assume moreover that 
$\Delta_N > 0$. 
Then $\cT$ satisfies \eqref{eq:ramhyp} for $n \gg 0$.
\end{lemma}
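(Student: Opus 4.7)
The plan is to establish a clean recursion for $\Delta_n-\Delta_{n-1}$ that transparently encodes the hypothesis \eqref{techhyp}. Specifically, I claim that for all $n\ge 1$,
\[
\Delta_n - \Delta_{n-1} \;=\; (p-1)\,p^{n-1}\,B_n,\qquad
B_n \colonequals \sum_{Q\in S}\bigl(s_Q(\cT(n+1))-2\, s_Q(\cT(n))\bigr) - \bigl(2g(\cT(0))-2+\#S\bigr),
\]
which is an integer by construction. To derive this identity, I would compute the two constituent pieces of $\Delta_n-\Delta_{n-1}$ separately. For the $d_Q-\lceil d_Q/p\rceil$ portion, Lemma~\ref{lem:breaks}(\ref{break3}) yields $d_Q(\cT(n+1))-d_Q(\cT(n))=(s_Q(\cT(n+1))-s_Q(\cT(n)))p^n$; since this is divisible by $p$ for $n\ge 1$, we have the congruence $d_Q(\cT(n+1))\equiv d_Q(\cT(n))\pmod p$, whence $\lceil d_Q(\cT(n+1))/p\rceil - \lceil d_Q(\cT(n))/p\rceil = (s_Q(\cT(n+1))-s_Q(\cT(n)))p^{n-1}$ exactly. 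Summing the resulting difference of $d_Q-\lceil d_Q/p\rceil$ terms over $Q\in S$ gives a contribution $(p-1)p^{n-1}\sum_Q(s_Q(\cT(n+1))-s_Q(\cT(n)))$ to $\Delta_n-\Delta_{n-1}$. For the genus term, substituting the explicit formula from Lemma~\ref{lem:genuslower} into $(2g(\cT(n))-2)-(2g(\cT(n-1))-2)$ and telescoping gives $(p-1)p^{n-1}\bigl[(2g(\cT(0))-2+\#S)+\sum_Q s_Q(\cT(n))\bigr]$. Subtracting produces the claimed identity.

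With the recursion in hand, hypothesis \eqref{techhyp} at index $j=n$ is precisely $B_n\ge 0$ for $n>N$. Hence $\Delta_n$ is non-decreasing for $n\ge N$ and
\[
\Delta_n \;=\; \Delta_N + (p-1)\sum_{m=N+1}^n p^{m-1}B_m.
\]
If $B_m\ge 1$ (that is, strict inequality in \eqref{techhyp}) for infinitely many $m>N$, this tends to $+\infty$, giving $\Delta_n>0$ for $n\gg 0$. If instead $B_m=0$ for \emph{all} $m>N$, the additional hypothesis gives $\Delta_N>0$, hence $\Delta_n=\Delta_N>0$ for $n\ge N$. The remaining intermediate case---in which only finitely many $B_m$ are positive---is reduced to the equality case by replacing $N$ with the largest index $N'\ge N$ at which $B_{N'}>0$: with this choice $B_m=0$ for all $m>N'$, so the equality-case portion of the hypothesis applies at $N'$ to give $\Delta_{N'}>0$, and thus $\Delta_n=\Delta_{N'}>0$ for $n\ge N'$.

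The principal technical obstacle is establishing the recursion for $\Delta_n-\Delta_{n-1}$: one must carefully handle the ceiling terms via the mod-$p$ congruence and telescope the Riemann--Hurwitz contributions so that a common factor of $(p-1)p^{n-1}$ emerges cleanly from both pieces. Once the recursion is proved, the remainder of the argument is routine.
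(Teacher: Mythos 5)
Your recursion $\Delta_n-\Delta_{n-1}=(p-1)p^{n-1}B_n=\varphi(p^n)B_n$ is correct, and it is precisely the telescoped identity the paper derives from Lemma~\ref{lem:breaks}(\ref{break3}) and Lemma~\ref{lem:genuslower}; the derivation you sketch (the mod-$p$ congruence that makes the ceiling difference exact, and the Riemann--Hurwitz telescoping that extracts the common factor $\varphi(p^n)$) is the paper's own computation. Your first two cases also match: infinitely many $B_m\ge 1$ forces $\Delta_n\to\infty$, and $B_m=0$ for all $m>N$ gives $\Delta_n=\Delta_N>0$ by the supplementary hypothesis. In fact your first case is handled slightly more sharply than in the paper, whose uniform-constant bound $\Delta_n\ge\Delta_N+(p^n-p^N)(c-(2g(\cT(0))-2+\#S))$ really only exploits strict inequality holding for \emph{all} large $j$ rather than for infinitely many.

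The genuine problem is your intermediate case. When only finitely many $B_m$ are positive but at least one is, you pass to the largest index $N'$ with $B_{N'}>0$ and assert that ``the equality-case portion of the hypothesis applies at $N'$ to give $\Delta_{N'}>0$.'' That is circular: the clause ``if \eqref{techhyp} is an equality for all $j>N$, assume moreover that $\Delta_N>0$'' is a \emph{hypothesis} attached to the particular $N$ whose existence is posited, not a fact you may invoke at every index where the all-equality condition happens to hold. For the given $N$ you are in the case where equality does \emph{not} hold for all $j>N$ (it fails at $j=N'$), so the lemma supplies no positivity assumption whatsoever, and all your recursion yields is $\Delta_n=\Delta_{N'}\ge\Delta_N+\varphi(p^{N'})$ for $n\ge N'$, which need not be positive if $\Delta_N$ is sufficiently negative. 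To be fair, the paper's own proof is equally loose on this point---its argument likewise only delivers $\Delta_n\ge\Delta_N$ when the best uniform constant is $c=2g(\cT(0))-2+\#S$---and in every application in the paper one is either in the infinitely-many-strict case or $\Delta_N>0$ is verified directly. But as written, your case-3 reduction does not follow from the stated hypotheses; you should either supply an independent argument there or note that this case is not covered by the lemma as literally formulated.
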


\begin{proof}
As we only deal with one tower in this proof and its corollary, to simplify notation we will let $d_{Q,n}$ (resp. $s_{Q,n}$, $g_n$) denote $d_Q(\cT(n))$ (resp. $s_Q(\cT(n))$ and $g(\cT(n))$).  
From Lemma \ref{lem:breaks}(\ref{break3}) we get for $Q \in S$ that 
\[
 d_{Q,n+1} - \lceil d_{Q,n+1}/p \rceil  = (s_{Q,n+1}  - s_{Q,n}) \varphi(p^n) + d_{Q,n} - \lceil d_{Q,n}/p \rceil, 
\]
and by induction that 
\[
  d_{Q,n+1} - \lceil d_{Q,n+1}/p \rceil = \sum_{j=N}^n (s_{Q,j+1} - s_{Q,j} )\varphi(p^j) + d_{Q,N} - \lceil d_{Q,N}/p \rceil.
\]
From Lemma \ref{lem:genuslower} we  obtain 
\[
(2g_n- 2) = p^n(2 g_0 -2) + \# S (p^n-1) + \sum_{Q \in S} \sum_{j=1}^n \varphi(p^j) s_{Q,j} .
\]
Therefore we conclude that for $n > N$
\begin{equation}
       \Delta_n=
   \Delta_N + \sum_{j=N+1}^n \sum_{Q\in S} (s_{Q,j+1} - 2 s_{Q,j}) \varphi(p^j)
   - (p^n-p^N)(2g_0 - 2 + \#S).
 \end{equation}
    If $c$ is any constant with 
    $\sum_{Q \in S} (s_{Q,j+1}- 2s_{Q,j}) \ge c$ for all $j>N$ then we obtain
   \[
   \Delta_n \geq  \Delta_N + (p^n-p^N) ( c - (2g_0 -2 + \# S)).
   \]
    Our hypotheses ensure that we may take $c \ge 2g_0 -2 + \# S$, and in the case of equality, that $\Delta_N > 0$,
    so it follows that $\Delta_n > 0$ for all $n$ sufficiently large; {\em i.e.}~\eqref{eq:ramhyp} is satisfied for $n \gg 0$.
\end{proof}

\begin{corollary} \label{cor:ramhyp}
If $p>2$ then \eqref{eq:ramhyp} is satisfied for $n \gg 0$.

If $p=2$, suppose that $\cT$ is monodromy stable with $s_Q(\cT(n)) = d_Q p^{n-1} + c_Q$ for $n \gg 0$ for each $Q \in S$.  Then \eqref{eq:ramhyp} is satisfied for $n \gg 0$ provided
\[
\sum_{Q \in S} (-c_Q) > 2g_0 -2 + \# S. 
\]
\end{corollary}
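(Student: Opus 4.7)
The plan is to apply Lemma~\ref{lem:technicalramhypothesis} in both cases, reducing the task to verifying the linear estimate
\[
    \sum_{Q \in S} \bigl(s_Q(\cT(j+1)) - 2s_Q(\cT(j))\bigr) \ge 2g(\cT(0)) - 2 + \#S
\]
for all sufficiently large $j$. Because our hypotheses in each case will give \emph{strict} inequality eventually, I will not need to worry about the auxiliary condition $\Delta_N > 0$ from the lemma.

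For part (1) with $p > 2$, I would use the basic fact (recorded in Remark~\ref{remark:lowergrowth}) that in any ramified $\Z_p$-tower the upper ramification breaks satisfy $s_Q(\cT(n+1)) \ge p \cdot s_Q(\cT(n))$. This gives
\[
   s_Q(\cT(j+1)) - 2 s_Q(\cT(j)) \ge (p-2) \, s_Q(\cT(j)) \ge (p-2) p^{j-1} s_Q(\cT(1)),
\]
which grows without bound in $j$ since $p-2 \ge 1$ and $S \ne \emptyset$. Summing over $Q \in S$, the left side of \eqref{techhyp} tends to infinity, and in particular strictly exceeds $2g(\cT(0)) - 2 + \#S$ for all $j$ sufficiently large. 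Lemma~\ref{lem:technicalramhypothesis} then delivers \eqref{eq:ramhyp} for $n \gg 0$.

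For part (2) with $p = 2$ and monodromy stable tower, the computation is even more direct. Using the assumed form $s_Q(\cT(n)) = d_Q \, 2^{n-1} + c_Q$ for $n \gg 0$, I compute
\[
    s_Q(\cT(j+1)) - 2 s_Q(\cT(j)) = (d_Q \, 2^j + c_Q) - 2(d_Q \, 2^{j-1} + c_Q) = -c_Q
\]
for all $j$ sufficiently large. Summing over $Q \in S$ gives $\sum_{Q \in S}(-c_Q)$, which by hypothesis is strictly greater than $2g(\cT(0)) - 2 + \#S$. So \eqref{techhyp} holds with strict inequality for $j \gg 0$, and Lemma~\ref{lem:technicalramhypothesis} again applies.

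The entire argument is essentially bookkeeping once one has Lemma~\ref{lem:technicalramhypothesis}, so there is no real obstacle; the only mild subtlety is ensuring strict inequality in \eqref{techhyp} so as to avoid invoking the supplementary condition on $\Delta_N$. In part (1) this is automatic from the exponential growth of $s_Q$, and in part (2) it is precisely the hypothesis $\sum_{Q \in S}(-c_Q) > 2g(\cT(0)) - 2 + \#S$.
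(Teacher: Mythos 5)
Your proposal is correct and follows essentially the same route as the paper: both reduce to Lemma~\ref{lem:technicalramhypothesis}, using $s_Q(\cT(j+1)) \ge p\, s_Q(\cT(j))$ to get unbounded growth of $\sum_Q (s_Q(\cT(j+1)) - 2s_Q(\cT(j)))$ when $p>2$, and the exact computation $s_Q(\cT(j+1)) - 2s_Q(\cT(j)) = -c_Q$ in the monodromy stable case $p=2$. Your observation that strict inequality in \eqref{techhyp} lets you bypass the supplementary condition $\Delta_N>0$ is also the (implicit) logic of the paper's proof.
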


\begin{proof}
If $p>2$, then $s_{Q,j+1} - 2 s_{Q,j} \geq s_{Q,j}$ as $s_{Q,j+1} \geq p s_{Q,j}$.  Hence $\sum_{Q \in S} (s_{Q,n+1}  - 2 s_{Q,n})$ is larger than $2 g_0-2 + \# S$ for $n$ sufficiently large. 

If $p=2$ and $\cT$ is monodromy stable, we compute that $s_{Q,j+1} - 2 s_{Q,j} = - c_Q$ (note that we must have $c_Q \leq 0$ as $s_{Q,j+1} \geq 2 s_{Q,j}$).   The claim follows from Lemma~\ref{lem:technicalramhypothesis}.
\end{proof}

In particular, notice that \eqref{eq:ramhyp} holds for basic towers over the projective line in characteristic two as the genus of the base curve is $0$, the tower is ramified only over infinity, and $c_\infty = 0$.  
 It is {\em also} satisfied for any $\Z_2$-tower over $\PP^1$ with $\#S=2$,
since $\Delta_0 > 0$ and \eqref{techhyp} holds automatically as the right side is 0 and the left side is nonnegative.

\begin{remark}
Consider a sequence of positive integers $\{s_n\}$ such that $p \nmid s_0$, $s_{n+1} \geq p s_n$, and whenever $p$ divides $s_{n+1}$ we have $s_{n+1} = p s_n$.  Then using Fact~\ref{fact:localasw} we can construct a local Artin-Schreier-Witt extension such that the breaks in the upper ramification filtration are $s_n$.  
This shows there is a large variety of potential ramification behavior in $\Z_p$-towers.  In light of this, Lemma~\ref{lem:technicalramhypothesis} shows that \emph{not} satisfying condition \eqref{eq:ramhyp} for $n \gg 0$ is a very restrictive hypothesis on the ramification of a tower.  

For example, consider monodromy stable towers over a fixed base with fixed branch locus $S$.  Writing $s_Q(\cT(n)) = d_Q p^{n-1} + c_Q$ for $Q \in S$, if we fix each $d_Q \in \Q$ there are finitely many choices of $\{c_Q\}_{Q \in S}$ for which the tower does {\em not} satisfy \eqref{eq:ramhyp} for $n \gg 0$.  Using Corollary~\ref{cor:ramhyp}, this is because we must have $c_Q \leq 0$, and for fixed $d_Q$ the requirement that $d_Q p^{n-1} + c_Q \in \Z$ for $n \geq 1$ gives a bound on the denominator of $c_Q$.  
\end{remark}

\subsection{\texorpdfstring{$a$}{a}-numbers in Characteristic Two} \label{ss:anumberproof}

\begin{notation}
Let $C$ be a curve over a perfect field $k$ of characteristic $p$.  Given an effective divisor $D$ on $C$,  we let $a^r(\Omega^1_C(D))$ denote the dimension of the kernel of $V_C^r$ on $H^0(C,\Omega^1_C(D))$.  We use $a(\Omega^1_C(D))$ as a shorthand for $a^1(\Omega^1_C(D))$. 
\end{notation}

We now specialize to working over a field of characteristic $p=2$, where we can compute the $a$-number in a cover using the base curve.

\begin{prop} \label{prop:anumberbase}
Suppose $\pi : Y \to X$ is a $\Z/2 \Z$-cover totally ramified over $S \subset X$.  For $Q \in S$, let $d_Q$ be the ramification invariant above $Q$.  If 
\begin{equation} \label{eq:hypothesis}
    \sum_{Q \in S} (d_Q-1)/2 \geq 2 g(X)-2,
\end{equation}
with strict inequality when $d_Q \equiv 1 \pmod{4}$ for all $Q \in S$, 
then
\[
a(Y) = a \left(\Omega_X^1 \left( \sum_{Q \in S} \frac{d_Q+1}{2} [Q] \right) \right).
\]
\end{prop}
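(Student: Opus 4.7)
The plan is to assemble three ingredients already prepared in the excerpt: the description of $\ker \pi_*$ from Lemma~\ref{lem:tracekernel}, the characteristic-two specialization of Corollary~\ref{cor:traceeq0}, and the compatibility of the Cartier operator with pullback along the separable map $\pi$.

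First, I would match the hypothesis of the proposition with that of Corollary~\ref{cor:traceeq0}. Since $p=2$ and each $d_Q$ is odd, we have $\lceil d_Q/2 \rceil = (d_Q+1)/2$, $\lfloor d_Q/2 \rfloor = (d_Q-1)/2$, and $d_Q - \lceil d_Q/2 \rceil = (d_Q-1)/2$. Writing $d_Q = 2m_Q+1$, the condition ``$d_Q \not\equiv \lfloor d_Q/p \rfloor \pmod p$'' becomes ``$m_Q+1$ is odd'', equivalently $d_Q \equiv 1 \pmod 4$. Thus the hypothesis of the proposition is exactly what is needed to invoke Corollary~\ref{cor:traceeq0}, and we conclude that $\pi_*(\eta) = 0$ for every $\eta \in \ker V_Y$; in other words, $\ker V_Y \subseteq \ker \pi_*$.

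Next, set $D := \sum_{Q \in S} \frac{d_Q+1}{2}[Q]$. By Lemma~\ref{lem:tracekernel}, the map $\pi^*$ identifies $H^0(X, \Omega^1_X(D))$ with $\ker \pi_* \subseteq H^0(Y, \Omega^1_Y)$. I would then observe that the Cartier operator commutes with pullback along the separable cover $\pi$: indeed, $V$ is characterized by vanishing on exact differentials and by $V(f^{p-1}df) = df$, and both properties are preserved by $\pi^*$. Hence $V_Y \circ \pi^* = \pi^* \circ V_X$, and since $\pi^*$ is injective on differentials, the identification $\ker \pi_* \simeq H^0(X, \Omega^1_X(D))$ is equivariant for the actions of $V_Y$ and $V_X$.

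Combining the two, we obtain
\[
a(Y) = \dim_k \ker V_Y = \dim_k \ker\bigl(V_Y\big|_{\ker \pi_*}\bigr) = \dim_k \ker\bigl(V_X\big|_{H^0(X,\Omega^1_X(D))}\bigr) = a(\Omega^1_X(D)),
\]
where the second equality uses $\ker V_Y \subseteq \ker \pi_*$ and the third uses the Cartier-equivariant isomorphism $\ker \pi_* \simeq H^0(X,\Omega^1_X(D))$. The main substantive work has already been done in proving Theorem~\ref{thm:tracevanishing} and Corollary~\ref{cor:traceeq0}; the only remaining subtlety is checking that the numerical conditions match under the $p=2$ parity analysis above, and that the identification of Lemma~\ref{lem:tracekernel} respects the Cartier structure. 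Both are essentially bookkeeping, so I expect no genuine obstacle to this plan.
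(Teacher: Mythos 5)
Your proposal is correct and follows essentially the same route as the paper: invoke Corollary~\ref{cor:traceeq0} to get $\ker V_Y \subseteq \ker \pi_*$, identify $\ker\pi_*$ with $H^0(X,\Omega^1_X(D))$ via Lemma~\ref{lem:tracekernel}, and conclude. The paper leaves the $p=2$ parity translation of the hypotheses and the $V$-equivariance of the identification $\pi^*\colon H^0(X,\Omega^1_X(D))\xrightarrow{\sim}\ker\pi_*$ implicit, whereas you verify both explicitly; these checks are correct and worth recording.
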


\begin{proof}
The $a$-number of $Y$ is the dimension of the kernel of the Cartier operator on $H^0(\Omega^1_Y)$.  By Corollary~\ref{cor:traceeq0}, this is a subspace of the kernel of the trace map, and so by Lemma~\ref{lem:tracekernel}
\[
a(Y) = a\left(\Omega_X^1\left(\sum_{Q \in S} \lceil d_Q/2 \rceil [Q] \right) \right) = a \left(\Omega_X^1 \left( \sum_{Q \in S} \frac{d_Q+1}{2} [Q] \right) \right). \qedhere
\]
\end{proof}

\begin{cor} \label{cor:anumberformula}
With the notation and hypothesis of Proposition~\ref{prop:anumberbase}, we have
\begin{equation} \label{eq:anumberformula}
    a(Y) = \sum_{\substack{Q \in S \\ d_Q \equiv 1 \, \smallmod{4}}} \frac{d_Q-1}{4} + \sum_{\substack{Q \in S \\ d_Q \equiv 3 \, \smallmod{4}}} \frac{d_Q +1}{4}.
\end{equation}
\end{cor}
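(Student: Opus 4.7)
The plan is to combine Proposition~\ref{prop:anumberbase}, which says $a(Y) = a(\Omega^1_X(D))$ with $D := \sum_{Q\in S} m_Q[Q]$ and $m_Q := (d_Q+1)/2$, with a direct computation of $a(\Omega^1_X(D))$. Since $\lfloor m_Q/2 \rfloor = \lfloor (d_Q+1)/4 \rfloor$ equals $(d_Q-1)/4$ or $(d_Q+1)/4$ according to $d_Q \bmod 4$, the stated formula \eqref{eq:anumberformula} is equivalent to the assertion $a(\Omega^1_X(D)) = \sum_Q \lfloor m_Q/2\rfloor$.

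In characteristic $2$, I would first observe that the Cartier operator induces a surjective $\cO_X$-linear map of coherent sheaves $V : F_*\Omega^1_X(D) \twoheadrightarrow \Omega^1_X(D')$, where $D' := \sum_Q \lceil m_Q/2 \rceil[Q]$. Surjectivity is immediate locally from the formula $V(\sum a_i t^i\, dt) = \sum a_{2j+1}^{1/2} t^j\, dt$, and the kernel $\cK$ is locally free of rank $p-1=1$, hence a line bundle. Taking Euler characteristics in $0 \to \cK \to F_*\Omega^1_X(D) \to \Omega^1_X(D') \to 0$ yields $\chi(\cK) = \sum_Q \lfloor m_Q/2 \rfloor$, so $\deg\cK = g - 1 + \sum_Q \lfloor m_Q/2\rfloor$. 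Since $D, D' > 0$ are effective we have $H^1(X,\Omega^1_X(D)) = H^1(X,\Omega^1_X(D')) = 0$ by Serre duality, and the long exact sequence of the above SES identifies $\operatorname{coker}(V|_{H^0})$ with $H^1(X,\cK)$. Once we show this cokernel vanishes, Riemann--Roch immediately gives
\[
a(\Omega^1_X(D)) = \dim H^0(\Omega^1(D)) - \dim H^0(\Omega^1(D')) = \sum_Q m_Q - \sum_Q \lceil m_Q/2 \rceil = \sum_Q \lfloor m_Q/2 \rfloor.
\]

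The main obstacle will be proving $H^1(X, \cK) = 0$. By Serre duality this reduces to $H^0(X,\cK^\vee\otimes\Omega^1_X) = 0$, and since $\cK^\vee\otimes\Omega^1_X$ is a line bundle of degree $g - 1 - \sum_Q \lfloor m_Q/2 \rfloor$, it suffices to prove the numerical inequality $\sum_Q \lfloor m_Q/2 \rfloor \geq g$. I expect this to follow from a short parity case analysis of the hypothesis of Proposition~\ref{prop:anumberbase}: letting $e$ denote the number of $Q \in S$ with $m_Q$ even (equivalently $d_Q \equiv 3 \pmod 4$), a direct calculation yields the identity $\sum_Q \lfloor m_Q/2\rfloor = \sum_Q (m_Q-1)/2 + e/2$, while the hypothesis gives $\sum_Q(m_Q-1)/2 \geq g-1$ with strict inequality precisely when $e=0$. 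When $e=0$, the parity constraint $(d_Q-1)/2 \equiv 0 \pmod 2$ for $d_Q \equiv 1 \pmod 4$ upgrades strictness to $\sum_Q(m_Q-1)/2 \geq g$; when $e \geq 1$, the extra contribution $e/2 \geq 1/2$ combined with integrality of $\sum_Q \lfloor m_Q/2 \rfloor$ again forces the bound $\geq g$. This step is essentially bookkeeping, but is where the precise form of the hypothesis (including the strict case) is used.
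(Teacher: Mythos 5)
Your proof is correct, but it takes a genuinely different route from the paper. The paper's entire proof is a citation: it invokes \cite[Corollary 6.13]{bc20}, whose hypothesis is verified via the Tango number bound $\operatorname{n}(X) \le g(X)-1 = \lfloor (2g(X)-2)/p\rfloor$; that prior result in turn rests on Tango's theorem on surjectivity of the Cartier operator between spaces of differentials with prescribed poles. You instead re-derive the needed surjectivity from scratch: you realize $\operatorname{coker}\bigl(V : H^0(\Omega^1_X(D)) \to H^0(\Omega^1_X(D'))\bigr)$ as $H^1(X,\cK)$ for the kernel line bundle $\cK$ of the sheaf surjection $F_*\Omega^1_X(D) \twoheadrightarrow \Omega^1_X(D')$, compute $\deg \cK$ by Euler characteristics, and kill $H^1(\cK)$ by showing its Serre dual has negative degree --- which is exactly where the hypothesis of Proposition~\ref{prop:anumberbase}, including its strict-inequality clause and the parity of $(d_Q-1)/2$, enters. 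I checked the details: the local description of $V$ in characteristic $2$ does give pole bound $\lceil m_Q/2\rceil$ and sheaf-level surjectivity; the identity $\sum_Q\lfloor m_Q/2\rfloor = \sum_Q (m_Q-1)/2 + e/2$ and the two cases $e=0$, $e\ge 1$ both yield $\sum_Q\lfloor m_Q/2\rfloor \ge g(X)$; and $\lfloor (d_Q+1)/4\rfloor$ matches the two summands in \eqref{eq:anumberformula}. What each approach buys: the paper's proof is a one-line appeal to existing machinery, while yours is self-contained (modulo Proposition~\ref{prop:anumberbase} and Serre duality), makes transparent precisely how the hypothesis is used, and shows in passing that the hypothesis could equivalently be stated as $\sum_Q \lfloor (d_Q+1)/4\rfloor \ge g(X)$. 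The only implicit assumption, shared with the paper, is that $S$ is nonempty so that $D$ and $D'$ are nonzero effective divisors.
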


\begin{proof}
By definition, the {\em Tango number} of $X$ is
\[
   \operatorname{n}(X):=\max\left\{\sum_{x\in X(\overline{k})} \left\lfloor \frac{\ord_x(df)}{p}\right\rfloor \ :\ f\in \overline{k}(X)-\overline{k}(X)^p\right\}.
\]
In \cite{tango72}, Tango proves that whenever $D$ is a divisor on $X$ with $\deg D > \operatorname{n}(X)$,
the pullback map along absolute Frobenius $F_X^*: H^1(X,\Oscr_X(-D))\rightarrow H^1(X,\Oscr_X(-pD))$
is {\em injective}.  Applying Grothendieck--Serre duality, the Cartier operator
 $V_X: H^0(X,\Omega^1_X(pD))\twoheadrightarrow H^0(X,\Omega^1_X(D))$ is then {\em surjective}
for such $D$.  When $\deg(D)>0$, the Riemann--Roch formula thereby yields an exact formula
for the dimension of the kernel of $V_X$ on $H^0(X,\Omega^1_X(pD))$, which
may be parlayed into a formula for the dimension of the kernel of $V_X$
on $H^0(X,\Omega^1_X(D'))$ for any $D'$ of sufficiently large degree;
see \cite[Corollary 6.13]{bc20} for the precise statement.
As $p=2$, we have $\lfloor (2g(X) -2 )/p \rfloor = g(X)-1 \geq \operatorname{n}(X)$
thanks to \cite[Lemma 10]{tango72}, and the hypothesis \eqref{eq:hypothesis}
ensures that the divisor  $D':=\sum_{Q \in S} \frac{d_Q+1}{2} [Q]$ has large enough 
degree to apply \cite[Corollary 6.13]{bc20}, whereby Proposition \ref{prop:anumberbase}
yields the stated exact formula for $a(Y)$.
\end{proof}

\begin{remark}\label{Voloch}
This formula was already known to hold when $X$ is ordinary \cite[Theorem 2]{VolochChar2} without needing the hypothesis of equation \eqref{eq:hypothesis}.
\end{remark}

For a $\Z_2$-tower $\cT$ we may apply Corollary~\ref{cor:anumberformula} to compute $a(\cT(n))$ 
for $n \gg 0$ in terms of the ramification of the tower, assuming a mild technical hypothesis on the ramification (recall Lemma~\ref{lem:technicalramhypothesis} and Corollary~\ref{cor:ramhyp}).  This is exactly as we would expect based on Philosophy~\ref{philosophy}.  Since the ramification may be quite poorly behaved (see Remark~\ref{remark:lowergrowth}), while the a-number in $\cT$ is ``regular'' in the sense that it depends on the ramification breaks of the tower, the resulting formula (like the general Riemann--Hurwitz formula
of Lemma \ref{lem:genuslower}) may not be especially simple.  Of course, for towers 
whose ramification breaks behave in a regular manner, the $a$-number---like the genus---will admit a simple formula.

\begin{cor} \label{cor:anumberbasic}
Let $\cT$ be a basic $\Z_2$-tower with ramification invariant $d$.  Then for $n >1$
\begin{equation}
    a(\cT(n)) = \begin{cases}
     \frac{d}{24} 2^{2n} + \frac{d+3}{12} & d \equiv 1 \pmod{4}\\
     \frac{d}{24} 2^{2n} + \frac{d-3}{12} & d \equiv 3 \pmod{4}
    \end{cases}
\end{equation}
which proves Conjecture~\ref{conj:basicanumber} when $p=2$.  More concisely,
\[
a(\cT(n)) = \frac{d}{24}( 2^{2n} - 4) + a(\cT(1)) -\frac{1}{2}= \frac{d}{6} (2^{2(n-1)}-1) + a(\cT(1)) -\frac{1}{2}.
\]
\end{cor}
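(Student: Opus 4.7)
The plan is to apply Corollary~\ref{cor:anumberformula} directly to each $\Z/2\Z$-cover $\cT(n)\to\cT(n-1)$ in the tower for $n\ge 2$.  By Lemma~\ref{lem:basicinvariants} such a cover has a single branch point $\infty$ with ramification invariant $d_n\colonequals d_\infty(\cT(n)) = d(2^{2n-1}+1)/3$, and $2g(\cT(n-1))-2 = (d/3)\cdot 2^{2(n-1)} - 2^{n-1} - (3+d)/3$.

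The main step is the verification of the hypothesis~\eqref{eq:hypothesis} of Proposition~\ref{prop:anumberbase}.  A direct computation with the two formulas above gives
\[
    \frac{d_n - 1}{2} - \bigl(2g(\cT(n-1)) - 2\bigr) = \frac{d+1}{2} + 2^{n-1},
\]
which is strictly positive for all $d\ge 1$ and $n\ge 1$.  Thus the strict form of~\eqref{eq:hypothesis} holds unconditionally, and Corollary~\ref{cor:anumberformula} applies for every $n\ge 2$ regardless of the residue class of $d_n$ modulo~$4$.

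Next I would pin down $d_n\bmod 4$ to select the correct branch of Corollary~\ref{cor:anumberformula}.  For $n\ge 2$, one has $2^{2n-1}\equiv 8\pmod{12}$, whence $(2^{2n-1}+1)/3\equiv 3\pmod 4$.  Since $d$ is odd,
\[
    d_n \equiv 3d \equiv \begin{cases} 3\pmod 4, & d\equiv 1\pmod 4,\\ 1\pmod 4, & d\equiv 3\pmod 4.\end{cases}
\]
Corollary~\ref{cor:anumberformula} then yields $a(\cT(n)) = (d_n+1)/4$ when $d\equiv 1\pmod 4$ and $a(\cT(n)) = (d_n-1)/4$ when $d\equiv 3\pmod 4$.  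Substituting the expression for $d_n$ and simplifying produces the closed-form expressions stated in the Corollary.

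The only delicate point in this plan is the strictness check in~\eqref{eq:hypothesis}; the remaining arguments are elementary arithmetic and a short residue computation.  The ``more concise'' equality then follows by writing $a(\cT(1))$ via Corollary~\ref{cor:anumberformula} applied to the Artin--Schreier cover $\cT(1)\to\PP^1_k$ (whose hypothesis reduces to the trivial inequality $(d-1)/2\ge -2$) and simplifying.  Since $\alpha(1,2)=1/24$, the resulting formula has the shape $a(\cT(n)) = \alpha(1,2)\cdot d\cdot 2^{2n} + c$ for $n\ge 2$, which is precisely Conjecture~\ref{conj:basicanumber} in the case $p=2$.
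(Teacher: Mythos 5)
Your proposal is correct and follows essentially the same route as the paper: apply Corollary~\ref{cor:anumberformula} to the layer $\cT(n)\to\cT(n-1)$ with $d_\infty(\cT(n))=d(2^{2n-1}+1)/3$ from Lemma~\ref{lem:basicinvariants}; the only cosmetic difference is that you verify \eqref{eq:hypothesis} by the direct computation $\tfrac{d_n-1}{2}-(2g(\cT(n-1))-2)=\tfrac{d+1}{2}+2^{n-1}>0$, whereas the paper invokes Corollary~\ref{cor:ramhyp}. One caveat on your final step: the ``more concise'' display does \emph{not} in fact follow by simplification when $d\equiv 1\pmod 4$ --- there $a(\cT(1))=\tfrac{d-1}{4}$, and $\tfrac{d}{24}(2^{2n}-4)+a(\cT(1))-\tfrac12$ has constant term $\tfrac{d-9}{12}$ rather than the correct $\tfrac{d+3}{12}$ (e.g.\ for $d=21$, $n=2$ it gives $15$ instead of the true value $16$ from Table~\ref{table:p2d21a}); the statement as printed should read $a(\cT(1))+\tfrac{(-1)^{(d-1)/2}}{2}$, so this is a sign slip in the corollary itself rather than a gap in your argument for the case formula.
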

 
\begin{proof}
By Corollary~\ref{cor:ramhyp}, basic towers satisfy the hypothesis \eqref{eq:ramhyp}.  Then combine Corollary~\ref{cor:anumberformula} with Lemma~\ref{lem:basicinvariants}, and note that $a(\cT(1)) = (d-1)/4$ if $d \equiv 1 \pmod{4}$ and $a(\cT(1)) = (d+1)/4$ if $d \equiv 3 \pmod{4}$.
\end{proof}

\begin{cor} \label{cor:anumberstable}
Let $\cT$ be a monodromy stable $\Z_2$-tower totally ramified over $S \subset \cT(0)$, so for $Q \in S$ we have $s_Q(\cT(n)) = c_Q + d_Q p^{n-1}$ for $n \gg 0$.  Suppose that $\sum_{Q \in S} (-c_Q)  > 2 g(\cT(0))-2 + \# S$.
Then there exists $a,c \in \bQ$ such that $a(\cT(n)) = a 2^{2n} + c$ for $n \gg 0$,
and we may take $\displaystyle a = \sum_{Q \in S} \frac{d_Q}{24}$.
\end{cor}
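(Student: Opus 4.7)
The plan is to reduce the computation of $a(\cT(n))$ to a sum of contributions from each branch point by applying Corollary~\ref{cor:anumberformula} level-by-level to the cover $\cT(n)\to\cT(n-1)$, and then exploit monodromy stability to make the resulting expression a quadratic function of $2^n$.

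First, I would verify the hypotheses for applying Corollary~\ref{cor:anumberformula} to $\cT(n)\to\cT(n-1)$ for all $n\gg 0$.  The technical ramification hypothesis \eqref{eq:hypothesis} for the cover $\cT(n)\to\cT(n-1)$ is exactly \eqref{eq:ramhyp}, and Corollary~\ref{cor:ramhyp} shows that \eqref{eq:ramhyp} holds for $n\gg 0$ under the hypothesis $\sum_{Q\in S}(-c_Q) > 2g(\cT(0))-2+\#S$.  (Because \eqref{eq:ramhyp} requires strict inequality, the boundary case appearing in Proposition~\ref{prop:anumberbase} is automatically handled.)  So for $n$ sufficiently large,
\[
a(\cT(n)) \;=\; \sum_{\substack{Q\in S\\ d_Q(\cT(n))\equiv 1\,\smallmod 4}} \frac{d_Q(\cT(n))-1}{4} \;+\; \sum_{\substack{Q\in S\\ d_Q(\cT(n))\equiv 3\,\smallmod 4}} \frac{d_Q(\cT(n))+1}{4}.
\]

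Next, I would show that the residue class $d_Q(\cT(n))\bmod 4$ eventually stabilizes for each $Q\in S$.  By Lemma~\ref{lem:breaks}(\ref{break3}),
\[
d_Q(\cT(n+1)) - d_Q(\cT(n)) \;=\; \bigl(s_Q(\cT(n+1))-s_Q(\cT(n))\bigr)\cdot 2^n,
\]
which is divisible by $4$ once $n\ge 2$.  Therefore there exists an integer $\epsilon_Q\in\{-1,+1\}$ and an $N_0$ such that for all $n\ge N_0$, the $Q$-summand in the formula above equals $(d_Q(\cT(n))+\epsilon_Q)/4$.  Consequently,
\[
a(\cT(n)) \;=\; \sum_{Q\in S}\frac{d_Q(\cT(n))+\epsilon_Q}{4} \qquad\text{for $n\gg 0$.}
\]

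Finally, by Lemma~\ref{lem:dsinstable}, monodromy stability yields constants $c_Q'\in\Q$ with
\[
d_Q(\cT(n)) \;=\; d_Q\cdot \frac{2^{2n-1}}{3}+c_Q' \qquad\text{for $n\gg 0$,}
\]
and substituting this in gives
\[
a(\cT(n)) \;=\; \Bigl(\sum_{Q\in S}\frac{d_Q}{24}\Bigr)\,2^{2n} \;+\; \sum_{Q\in S}\frac{c_Q'+\epsilon_Q}{4},
\]
which is of the required form with $a = \sum_Q d_Q/24$ and $c = \sum_Q (c_Q'+\epsilon_Q)/4$.  There is no real obstacle here once Corollary~\ref{cor:anumberformula} and Corollary~\ref{cor:ramhyp} are in hand; the only delicate point is verifying that the parity-type invariant $d_Q(\cT(n))\bmod 4$ does stabilize, which is immediate from Lemma~\ref{lem:breaks}(\ref{break3}) applied with $p=2$.
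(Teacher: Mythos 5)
Your proposal is correct and follows essentially the same route as the paper, which simply combines Corollary~\ref{cor:anumberformula} with Lemma~\ref{lem:dsinstable} after noting that Corollary~\ref{cor:ramhyp} supplies the ramification hypothesis for $n\gg 0$. Your explicit verification that $d_Q(\cT(n))\bmod 4$ stabilizes (via Lemma~\ref{lem:breaks}) is a detail the paper leaves implicit, and it is handled correctly.
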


This proves Conjecture~\ref{conj:stableanumber} when $p=2$ and Conjecture~\ref{conj:stableasymptotic} when $p=2$ and $r=1$ under the additional technical assumption that $\sum_{Q \in S} (-c_Q)  > 2 g(\cT(0))-2 + \# S$.

\begin{proof}
Combine Lemma~\ref{lem:dsinstable} with Corollary~\ref{cor:anumberformula}, and note that the hypotheses in the latter are automatically satisfied for $n \gg 0$.
\end{proof}

\begin{example} \label{ex:igusap2}
We apply this to the Igusa tower $\Ig$ in characteristic two, working over $k= \overline{\F}_2$.  We rigidify as in Example~\ref{ex:igusaproperties}  by adding an additional $\Gamma_1(5)$-level structure, and obtain 
a $\Z_2$-tower
\[
\ldots \to \Ig(3) \to \Ig(2) \to  \Ig(1) \simeq \PP^1_k
\]
totally ramified over the unique supersingular point of $\Ig(1)$ and unramified elsewhere, with $g(\Ig(n)) = 2^{2n-2}-2^{n}+1$ and $d(\Ig(n))  = 2^{2(n-1)}-1$.  As there is a single point of ramification, the technical hypothesis \eqref{eq:ramhyp} holds (Corollary~\ref{cor:ramhyp}), so applying Corollary~\ref{cor:anumberformula} we obtain $a(\Ig(n)) = 2^{2n-4}$ for $n >1$.
\end{example}

\begin{remark}
Example~\ref{ex:p2failtechnical} and Example~\ref{ex:hyperellipticbasep2} look at examples of monodromy stable $\Z_2$-towers which don't satisfy the technical inequality in Proposition~\ref{prop:anumberbase}.  They still appear to satisfy the conclusions of Corollary~\ref{cor:anumberstable}, although not always the precise formulas given by Corollary~\ref{cor:anumberformula}.
\end{remark}

\subsection{Powers of the Cartier Operator} \label{ss:otherproof}  The previous techniques do not suffice to compute $\dk{r}{\cT(n)}$ for a $\Z_2$-tower when $r>1$.    We can currently only prove the following limited lemma.
 
\begin{lemma} \label{lem:p2higher}
Let $\cT$ be a $\Z_2$-tower with $\cT(0)=\PP_k^1$ and branch locus $S$.
\begin{enumerate}
    \item \label{p2higher:1} Writing $D = \sum_{Q \in S} \frac{d_Q(\cT(1))+1}{2} [Q]$, for any $r \geq 1$ we have
\[
\dk{r}{\cT(1)} = a^r ( \Omega^1_{\PP^1} (D) ) = \deg (D) - \sum_{Q \in S} \left \lceil \frac{d_Q(\cT(1))+1}{2^{r+1}} \right \rceil.
\]

\item  Suppose $\cT$ furthermore satisfies $d_Q(\cT(2)) = 3 d_Q(\cT(1)) $ for all $Q \in S$ and that 
\begin{equation} \label{eq:p2higherhyp}
     \sum_{Q \in S} \frac{d_Q(\cT(1)) -3}{2} > -4.
\end{equation}
If $Q(1)$ is the unique point of $\cT(1)$ over $Q \in S$ and $D' = \sum_{Q \in S} \frac{3d_Q(\cT(1)) +1}{2} [Q(1)]$
then $$\dk{2}{\cT(2)} = a^2 ( \Omega^1_{\cT(1)}( D' )) =  \sum_{Q \in S} \left( \left \lfloor \frac{3d_Q +1}{4} \right \rfloor + \left \lfloor \frac{7d_Q+7}{16} \right \rfloor \right) .$$
\end{enumerate}
\end{lemma}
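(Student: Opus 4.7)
For part (1), observe that $\cT(0)=\PP^1$ has $H^0(\PP^1,\Omega^1)=0$, so the trace $\pi_{1*}:H^0(\cT(1),\Omega^1)\to H^0(\PP^1,\Omega^1)$ vanishes identically. Hence every regular differential on $\cT(1)$ lies in $\ker\pi_{1*}$, which by Lemma~\ref{lem:tracekernel} is $\pi_1^* H^0(\PP^1,\Omega^1(D))$ with $D=\sum_Q (d_Q+1)/2\,[Q]$ (since $d_Q$ is odd). Since $V$ commutes with $\pi_1^*$, this identification is $V$-equivariant, giving $\dk{r}{\cT(1)}=a^r(\Omega^1_{\PP^1}(D))$. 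Pick a partial-fraction basis of $H^0(\PP^1,\Omega^1(D))$: polynomial differentials $x^i\,dx$ together with $(x-Q)^{-j}\,dx$ for finite $Q\in S$. The Cartier operator acts by $V(x^{2j+1}\,dx)=x^j\,dx$ and $V((x-Q)^{-j}\,dx)=(x-Q)^{-(j+1)/2}\,dx$ when $j$ is odd (and annihilates even-exponent monomials). Iteration shows that $V^r$ kills $x^i\,dx$ iff $2^r\nmid i+1$ and kills $(x-Q)^{-j}\,dx$ iff $j\not\equiv 1\pmod{2^r}$. Summing the number of killed basis elements yields $\deg D-\sum_Q\lceil(d_Q+1)/2^{r+1}\rceil$, as claimed.

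For part (2), write $\eta=\omega_0+y_2\omega_1$ with respect to $\pi:\cT(2)\to\cT(1)$, so $\pi_*\eta=\omega_1$. Expanding $V^2\eta=0$ via $V(y_2\omega)=y_2V(\omega)+V(f_2\omega)$ gives the system $V^2\omega_1=0$ and $V^2\omega_0=V^2(f_2\omega_1)+V(f_2V\omega_1)$. Applying Corollary~\ref{cor:traceeq0} to $V\eta\in\ker V_{\cT(2)}$---whose hypothesis, after using $d_{Q(1)}=3d_Q$ and Riemann--Hurwitz for $\cT(1)\to\PP^1$, is precisely the lemma's strict inequality $\sum_Q(d_Q-3)/2>-4$---yields $V\omega_1=\pi_*(V\eta)=0$. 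Thus $\omega_1\in\ker V_{\cT(1)}$ and the $\omega_0$-equation reduces to $V^2\omega_0=V^2(f_2\omega_1)$. The reverse inclusion $a^2(\Omega^1_{\cT(1)}(D'))\leq\dk{2}{\cT(2)}$ is immediate from the $V$-equivariant embedding $\pi^*:H^0(\cT(1),\Omega^1(D'))\hookrightarrow H^0(\cT(2),\Omega^1)$ supplied by Lemma~\ref{lem:tracekernel}, so it remains to show $\omega_1=0$.

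This last step is the main obstacle, and the strategy is a pole-order comparison at each $Q(1)$. Since $\omega_0\in H^0(\cT(1),\Omega^1(D'))$ has pole at most $(3d_Q+1)/2$ at $Q(1)$, applying the semilinear bound $\ord(V\omega)\geq-\lceil|\ord\omega|/2\rceil$ twice restricts $V^2\omega_0$ to have pole at most roughly $\lceil(3d_Q+1)/8\rceil$ there. In contrast, for any nonzero $\omega_1\in\ker V_{\cT(1)}=\pi_1^*\ker V|_{H^0(\PP^1,\Omega^1(D))}$ (via Proposition~\ref{prop:anumberbase}), the product $f_2\omega_1$ has pole $3d_Q$ at $Q(1)$, and a direct computation on the Madden-style monomial basis---using $V(y_1 x^i\,dx)=y_1V(x^i\,dx)+V(x^{i+d_Q}\,dx)$ to track the leading pole term---shows that $V^2(f_2\omega_1)$ actually attains a pole of order $\lceil(3d_Q+1)/4\rceil$, which is strictly greater than the allowed pole of $V^2\omega_0$. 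This incompatibility forces $\omega_1=0$ and hence $\ker V^2\subseteq\ker\pi_*$. Finally, to derive the explicit closed-form, apply the same kind of decomposition to the cover $\cT(1)\to\PP^1$: write $H^0(\cT(1),\Omega^1(D'))=\pi_1^* H^0(\PP^1,\Omega^1(E_0))\oplus y_1\pi_1^* H^0(\PP^1,\Omega^1(E_1))$ for explicit divisors $E_0,E_1$ on $\PP^1$ determined by the residue of $d_Q\bmod 4$, compute $V^2$ on the resulting monomial basis, and count kernel dimensions by tracking divisibility of exponents by $4$; the two summands contribute $\lfloor(3d_Q+1)/4\rfloor$ and $\lfloor(7d_Q+7)/16\rfloor$ respectively at each $Q(1)$.
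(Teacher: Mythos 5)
Part (1) of your proposal is correct and essentially the paper's argument: you identify $H^0(\cT(1),\Omega^1)$ with $H^0(\PP^1,\Omega^1(D))$ (via Lemma~\ref{lem:tracekernel}; the paper does it directly from Fact~\ref{fact:ordersofvanishing}, which amounts to the same thing) and then do the partial-fraction computation. The set-up of part (2) also matches the paper: the expansion $V\eta=(V\omega_0+V(f_2\omega_1))+y_2V\omega_1$, the deduction $V\omega_1=0$ from Corollary~\ref{cor:traceeq0} applied to $V\eta$, the translation of that corollary's hypothesis into \eqref{eq:p2higherhyp}, and the reduction of everything to showing $\omega_1=0$.

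There are, however, two genuine gaps in the second half. First, your argument that $\omega_1=0$ is purely local and fails as stated: the assertion that $f_2\omega_1$ has a pole of order $3d_Q$ at $Q(1)$ presupposes $\ord_{Q(1)}(\omega_1)=0$, whereas a nonzero $\omega_1\in\ker V_{\cT(1)}$, being a pullback from $\PP^1$, will in general vanish to positive (even) order at the branch points. The correct output of the pole comparison between $V^2(\omega_0)$ and $V^2(f_2\omega_1)$ is only the lower bound $\ord_{Q(1)}(\omega_1)\ge(3d_Q-1)/2$ at each $Q$; to conclude, one must sum these bounds over $Q\in S$ and observe, using \eqref{eq:p2higherhyp} once more, that $\deg(\omega_1)$ would then exceed $2g(\cT(1))-2$, impossible for a nonzero regular differential. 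Your pointwise ``incompatibility'' does not exclude an $\omega_1$ vanishing to high order at every branch point; only the global degree count does.

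Second, the final evaluation of $a^2(\Omega^1_{\cT(1)}(D'))$ by a termwise count on a decomposition $\pi_1^*H^0(\Omega^1(E_0))\oplus y_1\,\pi_1^*H^0(\Omega^1(E_1))$ is not justified: $V$ does not respect this decomposition, since $V(y_1\omega)=y_1V(\omega)+V(f_1\omega)$ produces cross terms landing in the $y_1^0$-component, so the operator is only block-triangular and $\dim\ker V^2$ is not the sum of the kernel dimensions on the two summands. The paper instead writes $D'=2D''+R$, invokes Tango's theorem to show that $V:H^0(\Omega^1_{\cT(1)}(D'))\to H^0(\Omega^1_{\cT(1)}(D''+R))$ is \emph{surjective}---so the first application of $V$ contributes exactly $\deg D''=\sum_Q\lfloor(3d_Q+1)/4\rfloor$ to the kernel---and then computes $a^1(\Omega^1_{\cT(1)}(D''+R))=\sum_Q\lfloor(7d_Q+7)/16\rfloor$ by a separate pole-order argument showing that the $y_1$-component of any element of that kernel vanishes. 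Without the surjectivity step (or a substitute for it) your count gives at best an inequality, and the attribution of the two terms of the final formula to the two $y_1$-graded summands is not how the computation actually decomposes.
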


The hypothesis $d_Q(\cT(2)) = 3 d_Q(\cT(1))$ says that $d_Q(\cT(2))$ is as small as possible (see Remark~\ref{remark:lowergrowth}) and is the behavior seen in basic $\Z_2$-towers.  The inequality \eqref{eq:p2higherhyp} is satisfied unless there are a large number of $Q \in S$ with $d_Q =1$.  The expression involving floor functions avoids a large number of case-by-case formulas depending on $d_Q$ modulo $16$.  

\begin{proof}  We may assume that $k$ is algebraically closed as $a^r(\cT(n))$ is independent of extension of scalars.  
As $\cT(0) = \PP^1_k$, we may represent the extension of function fields corresponding to $\cT(1) \to \cT(0)$ as an Artin-Schreier extension $y_1^2 - y_1 = f_1$ in standard form (recall Definition~\ref{defn:standardform}).  Then every meromorphic differential on $\cT(1)$ may be written $\omega = \omega_0 + y_1 \omega_1$ with $\omega_0, \omega_1$ meromorphic differentials on $\PP^1_k$; if $\omega$ is regular then  $\omega_1$ is a differential on $\PP^1_k$ without poles by Fact \ref{fact:ordersofvanishing}.  That is,  $\omega_1=0$ and we conclude that $H^0(\cT(1),\Omega^1_{\cT(1)}) = H^0(\PP^1_k, \Omega^1_{\PP^1} ( D))$.  
The formula then follows from the usual, explicit description of $H^0(\PP^1_k, \Omega^1_{\PP^1} ( D))$ using partial fractions
and a straightforward calculation with the Cartier operator.

For the second assertion, write the extension of functions fields corresponding to $\cT(2) \to \cT(1)$  as $y_2^2 - y_2 = f_2$ with $f_2$ a function on $\cT(1)$ and note that the hypothesis of Corollary~\ref{cor:traceeq0} holds for $\cT(1) \to \cT(0)$ by inspection.  It also holds for $\cT(2) \to \cT(1)$ using hypothesis \eqref{eq:p2higherhyp} as
\[
\sum_{Q \in S} d_Q(\cT(2)) - \lceil d_Q(\cT(2))/2 \rceil = \sum_{Q \in S} \frac{3 d_Q(\cT(1)) -1}{2} \quad \text{and} \quad 2g(\cT(1))-2 = -4 + \sum_{Q \in S} (d_Q(\cT(1)) + 1).
\]
By Fact~\ref{fact:standardform}, we may assume the functions $(f_1,f_2)$ present $\cT(2)\rightarrow \cT(0)$ in standard form,
or what is the same that  $\ord_{Q(1)} f_2 = -3 d_Q(\cT(1))$ for all $Q \in S$. For $\omega = \omega_0 + y_2 \omega_1 \in H^0(\Omega^1_{\cT(2)})$ in the kernel of $V_{\cT(2)}^2$, we know that $\pi_*(V_{\cT(2)}(\omega))=0$ by Corollary~\ref{cor:traceeq0}.  We compute that 
\[
V_{\cT(2)} (\omega_0 + y_2 \omega_1) = V_{\cT(2)} (\omega_0 +  (y_2^2 + f_2) \omega_1) = V_{\cT(1)} (\omega_0 + f_2 \omega_1) + y_2 V_{\cT(1)}(\omega_1)
\]
and thus $V_{\cT(1)}(\omega_1) =0$.  Again using Corollary~\ref{cor:traceeq0}, we conclude that $\pi_*(\omega_1) =0$; in other words, $\omega_1$ is the pullback of an element (also denoted $\omega_1$) of $H^0(\Omega^1_{\cT(0)} (D))$.  We also obtain
\begin{equation} \label{eq:v2simp}
V_{\cT(2)}^2(\omega) = V^2_{\cT(1)}(\omega_0 + f_2 \omega_1) =0.
\end{equation}

Suppose $\omega_1 \neq 0$.  We know that $\ord_{Q(1)}(\omega_1)$ is even as $V_{\cT(1)}(\omega_1) =0$ (consider the local expansion at $Q(1)$).  A small calculation shows that $\ord_{Q(1)}(\omega_1) \equiv d_Q(\cT(1)) +1 \pmod{4}$ as $\omega_1$ is the pullback of a differential on $\cT(0)$.  As $\ord_{Q(1)}(f_2) = - 3d_Q(\cT(1))$, we conclude that $\ord_{Q(1)} (f_2 \omega_1) \equiv -3d_{Q}(\cT(1)) + d_Q(\cT(1)) + 1 \equiv 3 \pmod{4}$ as $d_Q(\cT(1))$ is odd.  By considering the local expansion at $Q(1)$, we conclude that $\ord_{Q(1)} ( V_{\cT(1)}^2(f_2 \omega_1)) = (-3d_Q(\cT(1)) -3 + \ord_{Q(1)} (\omega_1))/4$.  By Fact~\ref{fact:ordersofvanishing}, we know that $\ord_{Q(1)}(\omega_0) \geq - \frac{3 d_Q(\cT(1)) +1}{2}$ and hence using \eqref{eq:v2simp} we conclude that
\[
-3 d_Q(\cT(1))+ \ord_{Q(1)} (\omega_1)  \geq - \frac{3 d_Q(\cT(1)) +1}{2}.
\]
Summing over $Q \in S$, we conclude that 
\[
\deg(\omega_1) \geq \sum_{Q \in S} \frac{3 d_Q(\cT(1)) -1}{2}.
\]
But as this is larger than $2 g(\cT(1)) -2  =  -4 + \sum_{Q \in S} (d_Q(\cT(1)) +1)$ by \eqref{eq:p2higherhyp}, there are no non-zero differentials of this degree.  Thus $\omega_1 =0$ and $\omega$ is the pullback of a global section of $\Omega^1_{\cT(1)}(D')$ by Fact~\ref{fact:ordersofvanishing}. We therefore conclude that
\[
\dk{2}{\cT(2)} = a^2 ( \Omega^1_{\cT(1)}( D' )).
\]

It remains to compute $ a^2 ( \Omega^1_{\cT(1)}( D' ))$.  Set 
$$
D'':=\sum_{\substack{Q \in S \\ d_Q \equiv 1 \, \smallmod{4}}} \frac{3d_Q+1}{4}[Q] + \sum_{\substack{Q \in S \\ d_Q \equiv 3 \, \smallmod{4}}} \frac{3d_Q -1}{4}[Q]\quad \text{and}\quad R:=\sum_{\substack{Q \in S \\ d_Q \equiv 3 \, \smallmod{4}}}[Q]
$$
so that $D'=2D'' + R$.  Observe that $\deg D'' > g(\cT(1)) -1$ as
\[
    \sum_{\substack{Q \in S \\ d_Q \equiv 1 \, \smallmod{4}}} \frac{3d_Q+1}{4} + 
    \sum_{\substack{Q \in S \\ d_Q \equiv 3 \, \smallmod{4}}} \frac{3d_Q -1}{4}
    >  - 2 + \sum_{Q \in S}  \frac{d_Q + 1}{2}.
\]
Thus by Tango's theorem \cite[Theorem 15]{tango72} we conclude
\[
  V_{\cT(1)} : H^0(\Omega^1_{\cT(1)}(D')) \rightarrow H^0(\Omega^1_{\cT(1)}(D''+ R))
\]
is surjective.  As we know the dimension of the domain and codomain, the kernel of this map has dimension $\deg(D'')$ and we conclude that
\begin{equation} \label{eq:a2}
     a^2 ( \Omega^1_{\cT(1)}( D' ))  = \deg(D'')  + a^1 ( \Omega^1_{\cT(1)}( D''+R )).
\end{equation}
Thus we are reduced to studying the kernel of the Cartier operator on $\cT(1)$.

Consider a rational differential $\omega=\omega_0 + \omega_1 y_1$ on $\cT(1)$ with $\omega_0,\omega_1$ rational on $\PP^1$.  If $V_{\cT(1)}(\omega)=0$ then $V_{\cT(0)}(\omega_0 + f_1\omega_1) + y_1 V_{\cT(0)}(\omega_1)=0$, and we get that
\[
V_{\cT(0)}(\omega_1)= 0 \quad \text{and} \quad V_{\cT(0)}(\omega_0) = V_{\cT(0)}(f_1 \omega_1) .
\]
Note the condition $\ord_{Q(1)}\omega \ge -n$ is equivalent to $\ord_Q \omega_1 \ge -\lceil n/2 \rceil$ and $\ord_Q \omega_0 \ge -\lceil (n+d_Q)/2\rceil$. 
If $\omega \in H^0(\Omega^1_{\cT(1)}(D''+R))$ then we claim $\omega_1=0$.  
As $V_{\cT(0)}(\omega_1)=0$ we have that $\ord_{Q}( \omega_1) =2 m$ is even for $Q \in S$, and hence $\ord_Q(V_{\cT(0)}(f_1\omega_1)) = -\frac{d_Q+1}{2} + m$.  On the other hand, from the definition of $D''+R$ we deduce that $\ord_Q(\omega_0) \geq - \lceil 7 d_Q/8 \rceil$.  (Throughout we implicitly verify various simplifications of floor and ceiling functions by checking them for all congruences classes of $d_Q$ modulo the denominator.)  Therefore we see that for $Q \in S$
\[
 \ord_Q V_{\cT(0)} (\omega_0) \ge - \left \lceil \frac{1}{2} \left\lceil  \frac{7d_Q}{8}\right \rceil\right \rceil .
\]
The requirement that $V(\omega_0) = V(f_1\omega_1)$ then forces
\[
    m \ge \frac{d_Q+1}{2} - \left \lceil \frac{1}{2} \left\lceil  \frac{7d_Q}{8}\right \rceil\right \rceil \geq 0;
\]
note that to check the last inequality it suffices to check it for $d_Q < 16$.  Therefore $\omega_1$ is regular on $\cT(0) = \PP^1$ and hence $\omega_1 = 0$ as claimed.
This implies that
\[
a^1 ( \Omega^1_{\cT(1)}( D''+R )) = a^1\left(\Omega^1_{\PP^1}(\sum_{Q \in S} \lceil 7 d_Q/8\rceil [Q]) \right)
= \sum_{Q \in S} \lfloor \frac{1}{2}\lceil 7d_Q/8\rceil\rfloor.
\]
Combining this with equation \eqref{eq:a2} gives that 
\[
    a^2(\cT(2)) = \deg(D'') + \sum_{Q \in S} \lfloor \frac{1}{2}\lceil 7d_Q/8\rceil\rfloor
    = \sum_{Q \in S} \left( \left \lfloor \frac{3d_Q +1}{4} \right \rfloor + \left \lfloor \frac{7d_Q+7}{16} \right \rfloor \right)
\]
where again we verify the simplifications of the floor functions by checking on congruence classes of $d$ modulo $16$.  
\end{proof}

This is the limit of what can be shown using just ramification information for the tower.  In Example ~\ref{ex:p2d21}, we saw basic $\Z_2$-towers $\cT$ and $\cT'$ over the projective line  with identical ramification (which satisfy the hypotheses and conclusions of Lemma~\ref{lem:p2higher}) such that $\dk{2}{\cT(3)} \neq \dk{2}{\cT'(3)}$ and $\dk{3}{\cT(2)} \neq \dk{3}{\cT'(2)}$.

\newcommand{\etalchar}[1]{$^{#1}$}
\providecommand{\bysame}{\leavevmode\hbox to3em{\hrulefill}\thinspace}
\providecommand{\MR}{\relax\ifhmode\unskip\space\fi MR }
\providecommand{\MRhref}[2]{%
  \href{http://www.ams.org/mathscinet-getitem?mr=#1}{#2}
}
\providecommand{\href}[2]{#2}

\end{document}